\newtheorem{thm}{Theorem}[section]
\newtheorem{cor}[thm]{Corollary}
\newtheorem{lem}[thm]{Lemma}
\newcommand{\rhom}[1]{[#1]^r}
\newcommand{\lhom}[1]{[#1]^l}
\newcommand{\leva}[2]{\mathrm{ev}^{#1}_{#2}}
\newcommand{\reva}[2]{\widetilde{\mathrm{ev}}^{#1}_{#2}}
\newcommand{\lcoeva}[2]{\mathrm{coev}^{#1}_{#2}}
\newcommand{\rcoeva}[2]{\widetilde{\mathrm{coev}}^{#1}_{#2}}
\newcommand{\Set}{\mathbf{Set}}
\newcommand{\Cat}{\mathbf{Cat}}
\newcommand{\reve}{\mathrm{rev}}
\newcommand{\CM}{\chi}
\newcommand{\co}{\colon}
\newcommand{\id}{\mathrm{id}}
\newcommand{\conv}{\mathrm{Conv}}
\newcommand{\opp}{\mathrm{op}}
\newcommand{\cc}{\mathcal{C}}
\newcommand{\tg}{\mathcal{G}}
\newcommand{\cch}{\mathcal{C}_{\mathrm{hom}}}
\newcommand{\bb}{\mathcal{B}}
\newcommand{\dd}{\mathcal{D}}
\newcommand{\ee}{\mathcal{E}}
\newcommand{\ff}{\mathcal{F}}
\newcommand{\sss}{\mathcal{S}}
\newcommand{\alg}{\mathrm{Alg}}
\newcommand{\Mod}{\mathrm{Mod}}
\newcommand{\Ker}{\mathrm{Ker}}
\newcommand{\Coker}{\mathrm{Coker}}
\newcommand{\comod}{\mathrm{comod}}
\newcommand{\Comod}{\mathrm{Comod}}
\newcommand{\rmod}[1]{\mathrm{Mod}_{#1}}
\newcommand{\Hmod}[1]{\leftidx{_{#1}^{#1}}{\mathcal{H}}{}}
\newcommand{\kk}{\Bbbk}
\newcommand{\lact}[2]{\leftidx{^{#1}}{{#2}}{}}
\newcommand{\elact}[2]{\leftidx{^{#1}}{{\hspace{-.08em} #2}}{}}
\newcommand{\kt}{$\Bbbk$\nobreakdash-\hspace{0pt}}
\newcommand{\ti}{\mbox{-}\,}
\newcommand{\un}{\mathbb{1}}
\newcommand{\Ob}{\mathrm{Ob}}
\newcommand{\Aut}{\mathrm{Aut}}
\newcommand{\End}{\mathrm{End}}
\newcommand{\Hom}{\mathrm{Hom}}
\newcommand{\Ima}{{\mathrm{Im}}}
\newcommand{\mmod}{\mathrm{mod}}
\newcommand{\lev}{\mathrm{ev}}
\newcommand{\rev}{\widetilde{\mathrm{ev}}}
\newcommand{\lcoev}{\mathrm{coev}}
\newcommand{\rcoev}{\widetilde{\mathrm{coev}}}
\newcommand{\ldual}[1]{\leftidx{^\vee}{\!#1}{}}
\newcommand{\rdual}[1]{{#1}^\vee}
\newcommand{\scaledraw}[1]{A}
\newcommand{\scaleraisedraw}[2]{A}
\newcommand{\rsdraw}[3]{\raisebox{-#1\height}{\scalebox{#2}{\includegraphics{#3.eps}}}}
\newcommand{\labela}{\renewcommand{\labelenumi}{{\rm (\alph{enumi})}}}
\begin{document}

\title[Hopf crossed module (co)algebras]{Hopf crossed module (co)algebras}
\author{K\"{u}r\c{s}at S\"{o}zer}
 \address{K\"{u}r\c{s}at S\"{o}zer\newline
  \indent  McMaster University, Hamilton, ON L8S 4E8, Canada\\}
\email{sozerk@mcmaster.ca}
\author{Alexis Virelizier}
\address{Alexis Virelizier\newline
\indent Univ. Lille, CNRS, UMR 8524 - Laboratoire Paul Painlev\'e, F-59000 Lille, France\\}
\email{alexis.virelizier@univ-lille.fr}

\subjclass[2020]{16T05, 18M05, 18A50, 18G45}
\date{\today}

\begin{abstract}
Given a crossed module $\chi$, we introduce Hopf $\chi$-(co)algebras which generalize Hopf algebras and Hopf group-(co)algebras. We interpret them as Hopf algebras in some symmetric monoidal category. We prove that their categories of representations are monoidal and  $\chi$-graded  (meaning that both objects and morphisms have degrees which are related via $\CM$).
\end{abstract}

\maketitle

\setcounter{tocdepth}{1}
\tableofcontents

\section{Introduction}\label{sec-Intro}

Hopf algebras, such as quantum groups, are fundamental objects in the field of quantum algebra and quantum topology. In particular, Hopf algebras and their categories of representations are very useful in the construction of quantum invariants of knots and 3-manifolds (see for instance \cite{Jo,RT,TV,Ko,Ku,He,LMO}).

There are various generalizations of Hopf algebras. A particular generalization is given by so-called Hopf group-coalgebras introduced by Turaev for topological purposes:
given a (discrete) group~$G$, Hopf $G$-coalgebras and their categories of representations (which are $G$-graded monoidal categories) are used to define quantum invariants of principal $G$-bundles over 3-manifolds (see \cite{Tu1}).   On the algebraic side, the second author generalized to Hopf group-coalgebras most of the classical results for Hopf algebras (see~\cite{Vi2}), and  Caenepeel and De Lombaerde showed that Hopf group-coalgebras are Hopf algebra objects in a certain symmetric monoidal category (see~\cite{CD}).

In this paper, we introduce and study Hopf crossed module-coalgebras which are extensions of Hopf group-coalgebras. Recall from homotopical algebra that crossed modules are a convenient way of encoding (strict) 2-groups. Explicitly, a crossed module is a group homomorphism $\CM \co E \to H$ together with an action of $H$ on $E$ such that~$\CM$ is $H$-equivariant and satisfies the Peiffer identity (see Section~\ref{sect-crossed-modules-def}).

Given a crossed module $\CM \co E \to H$, a Hopf $\CM$-coalgebra is a Hopf $H$-coalgebra endowed with a $\CM$-action (see Section \ref{sect-Hopf-crossed-module-coalgebras}). More explicitly, a Hopf $\CM$-coalgebra (over a commutative ring~$\kk$) is a family $A=\{A_x\}_{x \in H}$ of $\kk$-algebras endowed with a comultiplication $\Delta = \{ \Delta_{x,y}\co A_{xy} \to A_x \otimes A_y\}_{x,y \in H}$, a counit $\varepsilon \co A_1 \to \kk$, an antipode $S=\{ S_x \co A_{x^{-1}} \to A_x\}_{x \in H}$, and a $\CM$-action consisting of a family of algebra isomorphisms
$$
\phi = \{ \phi_{x,e} \co A_x \to A_{\CM(e)x} \}_{(x,e) \in H \times E}.
$$
These data should verify some compatibility conditions generalizing the axioms of a Hopf algebra and of an action of a group. There are two particular cases. First, if $H$ is a group, then the trivial map $1 \to H$ is a crossed module and a Hopf $(1 \to H)$-coalgebra is a Hopf $H$-coalgebra. Second, if $E$ is an abelian group, then the map $E \to 1$ is a crossed module and a Hopf $(E \to 1)$-coalgebra is a Hopf algebra endowed with an action of $E$ by algebra and bicomodule automorphisms. 

Our main motivation for introducing Hopf $\CM$-coalgebras  is to produce instances of  monoidal categories which are $\CM$-graded (in the sense of \cite{SV}). In such a category, not only the objects have a degree in $H$, but also the morphisms have a degree in $E$, and these two degrees are related by the crossed module homomorphism~$\CM$. Actually, $\CM$-graded monoidal categories are useful for topological purposes: it is shown in \cite{SV} that any $\CM$-graded $\CM$-fusion category gives rise to a state sum invariant 
of  3-manifolds endowed with a homotopy class of maps to the classifying space $B\CM$ of $\CM$ (which is a homotopy 2-type) and, more generally, to a 3-dimensional homotopy quantum field theory with target~$B\CM$.

The first main result of the paper is that the category $\Mod_\CM(A)$ of modules over a Hopf $\CM$-coalgebra $A$ is a $\CM$-graded monoidal category with internal Homs (see Theorem~\ref{thm-mod-Xi-A}). We also study its full subcategory $\mmod_\CM(A)$ whose objects have their underlying module projective of finite rank. We prove that the pivotal structures on $\mmod_\CM(A)$ are in bijective correspondence with the pivotal elements of~$A$ (see Corollary~\ref{cor-mmod}), and we provide sufficient conditions on $A$ for $\mmod_\CM(A)$ to be a $\CM$-fusion category (see Theorem~\ref{thm-Xi-fusion}).

Note that the notion of a Hopf $\CM$-coalgebra is not self-dual: the dual notion is that of a Hopf $\CM$-algebra (see Section \ref{sect-dual-notion}) and the category $\Comod_\CM(A)$ of comodules over a Hopf $\CM$-algebra $A$ is a closed $\CM$-graded monoidal category (see Section~\ref{sect-dual-A-comodules}). 

Next, we introduce the notion of a Hopf $\CM$-module over a Hopf $\CM$-coalgebra and prove a structure theorem for them (see Theorem~\ref{thm-structure-Hopf-Xi-modules}). When the ground ring is a field and the Hopf $\CM$-coalgebra is of finite type, we derive from this structure theorem the existence and uniqueness of $\CM$-integrals (see Theorem~\ref{thm-existence_integrals}). These  generalize the well known corresponding results for Hopf algebras.

Finally, we interpret  Hopf crossed module-(co)algebras in any symmetric monoi\-dal category $\sss$ as Hopf algebra objects in some  symmetric monoidal category associated with $\sss$ (see Theorems~\ref{thm-characterization-Hopf-Xi-coalgebras-in-S} and \ref{thm-characterization-Hopf-Xi-algebras-in-S}). In particular,  the case where $\sss$ is the category $\Mod_\kk$ of \kt modules corresponds to the Hopf crossed module-(co)algebras over $\kk$ (considered above). This is built on the fact that crossed modules are group objects in the category of small categories (see \cite{BS}) and generalizes the above cited work of Caenepeel and De Lombaerde.

The paper is organized as follows. In Section~\ref{sect-2}, we review monoidal categories and the associated graphical calculus. In Section~\ref{sect-3}, we discuss the notions of cate\-gorical (co)algebras, Hopf algebras, and (co)modules. We recall crossed modules in Section~\ref{sect-4} and crossed module graded categories in Section~\ref{sect-5}. In Section~\ref{sect-Hopf-group-coalgebra}, we discuss Hopf group-coalgebras and their categories of representations.
In Section~\ref{sect-Hopf-gcrossed-modules-coalgebra}, we introduce   Hopf crossed module-(co)algebras. Section~\ref{sect-representations-Hopf-Xi-coalgebras} is devoted to the study of their categories of  representations. In Section~\ref{sect-Hopf-modules-integrals}, we introduce and characterise Hopf crossed module-modules and study integrals of Hopf crossed module-coalgebras.  Finally, in Section~\ref{sect-Hopf-Xi-as-Hopf}, we interpret Hopf crossed module-(co)algebras as Hopf algebras in some symmetric monoidal category.\\

Throughout the paper, we fix a nonzero commutative ring~$\kk$. The tensor product over $\kk$ is denoted $\otimes_\Bbbk$ or more simply $\otimes$ if there is no confusion.

\section{Categorical preliminaries}\label{sect-2}

\subsection{Conventions on monoidal categories}\label{sect-conventions}
For the basics on monoidal categories, we refer for example to \cite{ML1,EGNO,TVi5}. We  will suppress in our formulas  the associativity  and unitality  constraints   of monoidal categories.   This does not lead  to   ambiguity because by   Mac Lane's   coherence theorem,   all legitimate ways of inserting these constraints   give the same result.
For  any objects $X_1,...,X_n$ with $n\geq 2$, we set
$$
X_1 \otimes X_2 \otimes \cdots \otimes X_n=(... ((X_1\otimes X_2) \otimes X_3) \otimes
\cdots \otimes X_{n-1}) \otimes X_n
$$
and similarly for morphisms.

\subsection{Braided and symmetric categories}\label{sect-braided-categories}
A \emph{braiding} on a monoidal category $\bb=(\bb,\otimes,\un)$ is a natural isomorphism
$
\tau=\{\tau_{X,Y} \co  X \otimes Y\to Y \otimes X\}_{X,Y \in\bb}
$
such that for all objects $X,Y,Z \in \bb$,
\begin{equation*}
\tau_{X, Y\otimes Z}=(\id_Y \otimes \tau_{X, Z})(\tau_{X, Y} \otimes \id_Z) \quad \text{and} \quad
\tau_{X\otimes Y,Z}=(\tau_{X,Z} \otimes \id_Y)(\id_X \otimes \tau_{Y,Z}).
\end{equation*}
A \emph{braided category} is a monoidal category endowed with a braiding.

A \emph{braiding} on a monoidal category $\bb$ is \emph{symmetric} if $\tau_{Y,X} \tau_{X,Y}=\id_{X \otimes Y}$ for all $X,Y\in \bb$.
A \emph{symmetry} is a symmetric braiding. A \emph{symmetric monoidal category} is a monoidal category endowed with a  symmetry.

\subsection{Cartesian monoidal categories}\label{sect-cartesian-monoidal-categories}
A \emph{cartesian monoidal category} is a monoi\-dal category whose monoidal structure is given by the category-theoretic product (and so whose unit object is a terminal object). Such a category is then symmetric, with symmetry given by the canonical flip maps.

Any category with finite products can be considered as a cartesian monoidal category (as long as we have a specified product for each pair of objects). In particular, the category $\Set$ of sets and maps, and the category $\Cat$ of small categories and functors, endowed with their canonical category-theoretic product, are  symmetric monoidal cartesian categories.

For any object $X$ of a cartesian monoidal category, there is a unique morphism $\Delta_X \co X \to X \otimes X$, called the \emph{diagonal map}, such that $\Delta_X$ composed with the first or second projection is the identity and, since the unit  object is a terminal object,  there is a unique morphism $\varepsilon_X \co X \to \un$, called the \emph{augmentation}.

\subsection{Rigid categories}\label{sect-rigid}
A \emph{duality} in a monoidal category $\cc$ is a quadruple $(X,Y,e,d)$, where $X$, $Y$ are objects of
$\cc$, $e\co X \otimes Y \to \un$ (the \emph{evaluation}) and $d \co \un \to Y \otimes X$ (the \emph{coevaluation}) are
morphisms in $\cc$, such that
\begin{equation*}
(e \otimes \id_X)(\id_X \otimes d)=\id_X \quad \text{and} \quad (\id_Y \otimes e)(d \otimes \id_Y)=\id_Y.
\end{equation*}
Then $(X,e,d)$ is a \emph{left dual} of $Y$ and $(Y,e,d)$ is a \emph{right dual} of $X$.

Left and right duals, if they exist, are essentially unique: if $(Y,e,d)$ and $(Y',e',d')$ are right duals of some
object $X$, then there exists a unique isomorphism $u\co Y \to Y'$ such that $e'=e(\id_X \otimes u^{-1})$ and $d'=(u
\otimes \id_X)d$.

A monoidal category is \emph{left rigid} (respectively, \emph{right rigid}) if every object admits a left dual  (respectively, a right dual). A \emph{rigid category} is a monoidal category  which is both left rigid and right rigid.

Subsequently, when dealing with rigid categories, we shall always assume tacitly that for each object
$X$, a left dual $(\ldual{X},\lev_X,\lcoev_X)$ and a right dual $(\rdual{X},\rev_X,\rcoev_X)$ has been chosen.
Such a choice defines a left  dual functor $\ldual{?}\co \cc^\reve\to \cc$ and a right  dual functor $\rdual{?}\co  \cc^\reve \to \cc$,  where $\cc^\reve =(\cc^\opp,\otimes^\opp,\un)$ is the opposite category to~$\cc$ with opposite monoidal structure. In particular, the left and right  duals of any morphism $f\co X \to Y$ in $\cc$  are defined by
\begin{align*}
\ldual{f}&= (\lev_Y \otimes  \id_{\ldual{X}})(\id_{\ldual{Y}}  \otimes f \otimes \id_{\ldual{X}})(\id_{\ldual{Y}}\otimes \lcoev_X) \co \ldual{Y} \to \ldual{X},\\
\rdual{f}&= (\id_{\rdual{X}} \otimes \rev_Y)(\id_{\rdual{X}} \otimes f \otimes \id_{\rdual{Y}})(\rcoev_X \otimes \id_{\rdual{Y}})\co \rdual{Y} \to \rdual{X}.
\end{align*}
The left and right dual functors are monoidal. Note that the actual choice of left and right duals is innocuous in the sense that different choices of left (respectively,\@ right) duals define canonically monoidally isomorphic left (respectively,\@ right) dual
functors.

\subsection{Closed monoidal categories}\label{sect-closed}
A monoidal category $\cc$ is \emph{left closed} if for any object
$X$ of $\cc$, the endofunctor $? \otimes X$ has a right adjoint $\lhom{X,?}$, with adjunction unit and counit:
$$
\leva{X}{Y} \co  \lhom{X,Y}\otimes  X\to Y \quad\text{and}\quad \lcoeva{X}{Y} \co Y \to \lhom{X, Y\otimes X},
$$
called respectively the \emph{left evaluation} and the \emph{left coevaluation}. Then $\lhom{X,Y}$ is called the \emph{left internal Hom} from $X$ to $Y$. The left internal Homs give rise to a functor $\lhom{-,-}\co \cc^\opp \times \cc \to \cc$.

Similarly, a monoidal category $\cc$ is \emph{right closed} if for any object
$X$ of $\cc$, the endofunctor $X \otimes ?$ has a right adjoint $\rhom{X,?}$, with adjunction unit and counit:
$$
\reva{X}{Y} \co  X \otimes \rhom{X,Y}\to Y \quad\text{and}\quad\rcoeva{X}{Y} \co Y \to \rhom{X, X\otimes Y},
$$
called respectively the \emph{right evaluation} and the \emph{right coevaluation}. Then $\rhom{X,Y}$ is called the \emph{right internal Hom} from $X$ to $Y$. The right internal Homs give rise to a functor $\rhom{-,-}\co \cc^\opp \times \cc \to \cc$.

A monoidal category is \emph{closed} if it is both left and right closed.
For example, the category $\Mod_\kk$ of \kt modules and \kt linear homomorphisms is closed: the internal Homs between \kt modules $M$ and $N$ are
$
\lhom{M,N}=\rhom{M,N}=\Hom_\kk(M,N)
$
with the standard (co)evaluations.

Any rigid monoidal category is closed: the internal Homs
are $\lhom{X,Y}= Y \otimes \ldual{X}$ and $\rhom{X,Y}= \rdual{X}\otimes Y$
with (co)evaluations
\begin{align*}
\leva{X}{Y}&=\id_Y \otimes \lev_X,  & \lcoeva{X}{Y}&=\id_Y \otimes \lcoev_X, \\
\reva{X}{Y}&=\rev_X \otimes \id_Y, & \rcoeva{X}{Y}&=\rcoev_X \otimes \id_Y.
\end{align*}

\subsection{Pivotal categories}\label{sec-pivot}
A \emph{pivotal category} is a rigid category $\cc$  endowed with a monoidal isomorphism between the left and the right dual functors. By
modifying the right duals of objects using this monoidal isomorphism, we may assume it to be the identity without loss of generality. In other words, for each
object $X$ of $\cc$, we have a \emph{dual object}~$X^*$ and four morphisms
\begin{align*}
& \lev_X \co X^*\otimes X \to\un,  \qquad \lcoev_X\co \un  \to X \otimes X^*,\\
&   \rev_X \co X\otimes X^* \to\un, \qquad   \rcoev_X\co \un  \to X^* \otimes X,
\end{align*}
such that $(X^*,\lev_X,\lcoev_X)$ is a left
dual for $X$, $(X^*, \rev_X,\rcoev_X)$ is a right
dual for~$X$, and the induced left and right dual functors coincide as monoidal functors.

For example, any left rigid symmetric monoidal category has a canonical structure of a pivotal category, for which the right (co)evaluations are given by the left ones (pre)composed with the symmetry. In particular, the full subcategory  of  $\Mod_\kk$ consisting of projective \kt modules of finite rank has a canonical structure of a
pivotal category: the dual of  projective \kt module $M$ of finite rank is $M^*=\Hom_\kk(M,\kk)$ with the standard (co)evaluations.

\subsection{Penrose graphical calculus}\label{sect-Penrose}
Morphisms in a monoidal category may  be represented by planar diagrams to be read from bottom to top.  We discuss here the basics of this Penrose graphical calculus (see \cite{JS} or~\cite{TVi5} for a detailed treatment). The  diagrams are made up of arcs colored by objects and of boxes colored by morphisms.  The arcs connect
the boxes and   have no   intersections or self-intersections.
The identity $\id_X$ of an object $X$, a morphism $f\co X \to Y$,
and the composition of two morphisms $f\co X \to Y$ and $g\co Y \to
Z$ are represented as follows:
$$
\psfrag{X}[Bl][Bl]{\scalebox{.8}{$X$}} \psfrag{Y}[Bc][Bc]{\scalebox{.8}{$Y$}} \psfrag{h}[Bc][Bc]{\scalebox{.9}{$f$}} \psfrag{g}[Bc][Bc]{\scalebox{.9}{$g$}}
\psfrag{Z}[Bc][Bc]{\scalebox{.8}{$Z$}} \id_X=\, \rsdraw{.45}{.9}{identitymorph-bt}\,,\quad f=\, \rsdraw{.45}{.9}{morphism-bt}\;\,,\quad \text{and} \quad gf=\, \rsdraw{.45}{.9}{morphismcompo-bt}\,\; .
$$
The monoidal product of two morphisms $f\colon X \to Y$
and $g \colon U \to V$ is represented by juxtaposition:
$$
\psfrag{X}[Bl][Bl]{\scalebox{.8}{$X$}}
\psfrag{h}[Bc][Bc]{\scalebox{.9}{$f$}}
\psfrag{Y}[Bl][Bl]{\scalebox{.8}{$Y$}}
f\otimes g= \,\rsdraw{.45}{.9}{morphism-bt}\,\;
\psfrag{X}[Bl][Bl]{\scalebox{.8}{$U$}}
\psfrag{g}[Bc][Bc]{\scalebox{.9}{$g$}}
\psfrag{Y}[Bl][Bl]{\scalebox{.8}{$V$}}
\rsdraw{.45}{.9}{morphism3-bt}\,\;   .
$$
We can also use boxes with several strands attached to their horizontal sides. For example, a morphism $f \colon X \otimes Y \to A \otimes B \otimes C$ may be represented in various ways,   such   as
$$
\psfrag{X}[Bl][Bl]{\scalebox{.8}{$X$}}
\psfrag{h}[Bc][Bc]{\scalebox{.9}{$f$}}
\psfrag{Y}[Bl][Bl]{\scalebox{.8}{$Y$}}
\psfrag{A}[Bl][Bl]{\scalebox{.8}{$A$}}
\psfrag{B}[Bl][Bl]{\scalebox{.8}{$B$}}
\psfrag{C}[Bl][Bl]{\scalebox{.8}{$C$}}
\rsdraw{.45}{.9}{ex-penro1}
\qquad \text{or} \qquad
\psfrag{X}[Bl][Bl]{\scalebox{.8}{$X \otimes Y$}}
\psfrag{h}[Bc][Bc]{\scalebox{.9}{$f$}}
\psfrag{A}[Bl][Bl]{\scalebox{.8}{$A$}}
\psfrag{B}[Bl][Bl]{\scalebox{.8}{$B \otimes C$}}
\rsdraw{.45}{.9}{ex-penro2c}
\qquad \text{or} \qquad
\psfrag{X}[Bl][Bl]{\scalebox{.8}{$X$}}
\psfrag{h}[Bc][Bc]{\scalebox{.9}{$f$}}
\psfrag{Y}[Bl][Bl]{\scalebox{.8}{$Y$}}
\psfrag{A}[Br][Br]{\scalebox{.8}{$A \otimes B$}}
\psfrag{B}[Bl][Bl]{\scalebox{.8}{$C$}}
\rsdraw{.45}{.9}{ex-penro2b} \,\;.
$$
Here, in accordance with the conventions of  Section~\ref{sect-conventions}, we  ignore here the associativity   constraint   between the  objects $A \otimes B \otimes C =(A \otimes B)\otimes C$ and $A \otimes (B\otimes C)$.
A~box whose lower/upper side has no  attached strands represents a morphism with source/target $\un$. For example,
morphisms $\alpha\colon \un \to \un$,   $\beta\colon \un \to X$,   $\gamma\colon X \to \un$  may be represented by the diagrams
$$
\psfrag{h}[Bc][Bc]{\scalebox{.9}{$\alpha$}}
\rsdraw{.45}{.9}{ex-penro5}\;, \qquad
\psfrag{X}[Bl][Bl]{\scalebox{.8}{$X$}}
\psfrag{h}[Bc][Bc]{\scalebox{.9}{$\beta$}}
\psfrag{Y}[Bl][Bl]{\scalebox{.8}{$Y$}}
\rsdraw{.45}{.9}{ex-penro3b}\;, \qquad
\psfrag{X}[Bl][Bl]{\scalebox{.8}{$X$}}
\psfrag{h}[Bc][Bc]{\scalebox{.9}{$\gamma$}}
\psfrag{Y}[Bl][Bl]{\scalebox{.8}{$Y$}}
\rsdraw{.45}{.9}{ex-penro4b}\;.
$$

Every diagram which is colored as above determines a morphism obtained as follows. First slice the diagram into horizontal strips so that each strip is made of juxtaposition of vertical segments or boxes. Then, for each strip, take the monoidal product of the morphisms associated to the vertical segments or boxes. Finally,   compose the resulting morphisms  proceeding from the bottom to the top. For example, given morphisms $f\colon Y \to Z$, $g\colon B \otimes Z \to \un$, $h \colon X \to A \otimes B$,  the diagram
$$
\psfrag{h}[Bc][Bc]{\scalebox{.9}{$g$}}
\psfrag{k}[Bc][Bc]{\scalebox{.9}{$f$}}
\psfrag{b}[Bc][Bc]{\scalebox{.9}{$h$}}
\psfrag{X}[Bl][Bl]{\scalebox{.8}{$X$}}
\psfrag{Y}[Bl][Bl]{\scalebox{.8}{$Z$}}
\psfrag{A}[Bl][Bl]{\scalebox{.8}{$A$}}
\psfrag{B}[Bl][Bl]{\scalebox{.8}{$B$}}
\psfrag{Z}[Bl][Bl]{\scalebox{.8}{$Y$}}
\rsdraw{.45}{.9}{Penrose-ex}
$$
represents the morphism
$$
(\id_A \otimes g \otimes \id_X)(h \otimes f \otimes \id_X)=\bigl ( (\id_A \otimes g)(h \otimes f) \bigr ) \otimes \id_X
$$
from $X \otimes Y \otimes X$  to $A \otimes X$.

The functoriality of the monoidal product implies that the morphism associated to a colored diagram is  independent of the way of cutting it into horizontal strips. It also implies that we can push boxes lying on  the same horizontal level  up or down
without changing the morphism represented by the diagram. For example, for all morphisms $f\colon X \to Y$ and $g\colon U \to
V$ in~$\sss$, we have:
$$
\psfrag{X}[Bl][Bl]{\scalebox{.8}{$X$}}
\psfrag{f}[Bc][Bc]{\scalebox{.9}{$f$}}
\psfrag{Y}[Bl][Bl]{\scalebox{.8}{$Y$}}
\psfrag{U}[Bl][Bl]{\scalebox{.8}{$U$}}
\psfrag{g}[Bc][Bc]{\scalebox{.9}{$g$}}
\psfrag{V}[Bl][Bl]{\scalebox{.8}{$V$}}
\rsdraw{.45}{.9}{ex-penro6b} \, = \, \rsdraw{.45}{.9}{ex-penro7} \, = \, \rsdraw{.45}{.9}{ex-penro6}
$$
which graphically expresses the formulas
$$
f\otimes g =(\id_Y\otimes g) (f \otimes \id_U)= (f\otimes \id_V)( \id_X\otimes g).
$$
Here and in the sequel, the equality sign between  the diagrams     means the equality of the   corresponding morphisms.

The braiding $\tau$ of a braided category, and its inverse, are depicted as
$$
\psfrag{Z}[Bl][Bl]{\scalebox{.8}{$X$}}
\psfrag{X}[Br][Br]{\scalebox{.8}{$X$}}
\psfrag{A}[Bl][Bl]{\scalebox{.8}{$Y$}}
\psfrag{Y}[Br][Br]{\scalebox{.8}{$Y$}}
\tau_{X,Y}=\,\; \rsdraw{.45}{.9}{braid}
\quad \text{and} \quad
\tau^{-1}_{Y,X}=\,\; \rsdraw{.45}{.9}{braidinv} \,.
$$
The axioms of a braiding are depicted as follows: for all objects $X,Y,Z$,
\begin{center}
\psfrag{Z}[Bl][Bl]{\scalebox{.8}{$X \otimes Y$}}
\psfrag{X}[Br][Br]{\scalebox{.8}{$X\otimes Y$}}
\psfrag{A}[Bl][Bl]{\scalebox{.8}{$Z$}}
\psfrag{Y}[Br][Br]{\scalebox{.8}{$Z$}}
\rsdraw{.45}{.9}{braid} \qquad = \;
\psfrag{Z}[Bl][Bl]{\scalebox{.8}{$X$}}
\psfrag{C}[Bl][Bl]{\scalebox{.8}{$Y$}}
\psfrag{X}[Br][Br]{\scalebox{.8}{$X$}}
\psfrag{B}[Br][Br]{\scalebox{.8}{$Y$}}
\psfrag{A}[Bl][Bl]{\scalebox{.8}{$Z$}}
\psfrag{Y}[Br][Br]{\scalebox{.8}{$Z$}}
\rsdraw{.45}{.9}{braid-ax1} \quad \text{and} \quad \quad
\psfrag{Z}[Bl][Bl]{\scalebox{.8}{$X$}}
\psfrag{X}[Br][Br]{\scalebox{.8}{$X$}}
\psfrag{A}[Bl][Bl]{\scalebox{.8}{$Y \otimes Z$}}
\psfrag{Y}[Br][Br]{\scalebox{.8}{$Y \otimes Z$}}
\rsdraw{.45}{.9}{braid} \qquad = \;
\psfrag{Z}[Bl][Bl]{\scalebox{.8}{$Y$}}
\psfrag{C}[Bl][Bl]{\scalebox{.8}{$Z$}}
\psfrag{X}[Br][Br]{\scalebox{.8}{$Y$}}
\psfrag{B}[Br][Br]{\scalebox{.8}{$Z$}}
\psfrag{A}[Bl][Bl]{\scalebox{.8}{$X$}}
\psfrag{Y}[Br][Br]{\scalebox{.8}{$X$}}
\rsdraw{.45}{.9}{braid-ax2} \;.
\end{center}
The naturality of $\tau$ ensures that we can push boxes across a strand without changing the morphism represented by the diagram: for any morphism $f$,
$$
\psfrag{f}[Bc][Bc]{\scalebox{.9}{$f$}}
\rsdraw{.45}{.9}{braid-bl} \; = \; \rsdraw{.45}{.9}{braid-ur} \quad \text{and} \quad
\rsdraw{.45}{.9}{braid-br} \; = \; \rsdraw{.45}{.9}{braid-ul} \;.
$$

The  above  graphical calculus may be  enhanced for a pivotal category by orienting all arcs in the diagrams and depicting the (co)evaluations as
$$
\psfrag{X}[Bc][Bc]{\scalebox{.7}{$X$}} \lev_X= \rsdraw{.45}{.9}{leval}\, \,,\quad
\rev_X= \rsdraw{.45}{.9}{reval}   \,, \quad
\lcoev_X= \rsdraw{.45}{.9}{lcoeval} \,,\quad
\psfrag{C}[Bc][Bc]{\scalebox{.7}{$X$}} \rcoev_X=
\rsdraw{.45}{.9}{rcoeval}\,.
$$
Here, an arc  colored with an object $X$ and oriented   downward (resp., upward) contributes~$X$   (resp., $X^*$)  to the source/target of  morphisms.
For example, a morphism $f \colon X^* \otimes Y \to A \otimes B^* \otimes C$  may  be represented as
$$
\psfrag{X}[Bl][Bl]{\scalebox{.8}{$X$}}
\psfrag{h}[Bc][Bc]{\scalebox{.9}{$f$}}
\psfrag{Y}[Bl][Bl]{\scalebox{.8}{$Y$}}
\psfrag{A}[Bl][Bl]{\scalebox{.8}{$A$}}
\psfrag{B}[Bl][Bl]{\scalebox{.8}{$B$}}
\psfrag{C}[Bl][Bl]{\scalebox{.8}{$C$}}
\rsdraw{.45}{.9}{ex-penro1-pivot}\,.
$$
In particular, the identity of the dual $X^*$ of any object $X$ is represented as  
$$
\id_{X^*}= \,
\psfrag{X}[Bl][Bl]{\scalebox{.8}{$X^*$}}
\rsdraw{.45}{.9}{identity-pivot1}  =  \; \psfrag{X}[Bl][Bl]{\scalebox{.8}{$X$}}
\rsdraw{.45}{.9}{identity-pivot2} \;.
$$
The duality identities are    graphically expressed as
$$
 \psfrag{X}[Bc][Bc]{\scalebox{.8}{$X$}}
 \psfrag{u}[Bc][Bc]{\scalebox{1.111111}{$=$}}
 \rsdraw{.45}{.9}{Penrose-13} \qquad \text{and} \quad  \rsdraw{.45}{.9}{Penrose-14} \;.
$$
In a pivotal category, the morphism represented by a diagram is preserved under ambient isotopies of the diagram keeping
fixed the bottom and  top endpoints.

\section{Categorical Hopf algebras}\label{sect-3}
In this section, we review the  categorical version of the notions of a Hopf algebra and its modules.

\subsection{Categorical algebras}\label{sect-categorical-algebras}
Let $\cc$ be a monoidal category. An \emph{algebra in~$\cc$} is an object $A$ of $\cc$ endowed with morphisms $m\co A \otimes A \to A$ (the  product) and $u\co \un \to A$ (the unit) such that
\begin{equation*}
m(m \otimes \id_A)=m(\id_A \otimes m) \quad \text{and} \quad m(\id_A \otimes u)=\id_A=m(u \otimes \id_A).
\end{equation*}
We depict the product and  the unit  as
\begin{center}
\psfrag{A}[Bc][Bc]{\scalebox{.8}{$A$}} $m=$\rsdraw{.45}{.9}{mA} \quad \text{and} \quad $u=$\,\rsdraw{.35}{.9}{uA}\;.
\end{center}
The   axioms above have the following   graphical interpretation:
\begin{center}
\psfrag{A}[Bc][Bc]{\scalebox{.8}{$A$}} $\rsdraw{.45}{.9}{mA-axiom}=\rsdraw{.45}{.9}{mA-axiom2}$ \quad \text{and} \quad $\rsdraw{.45}{.9}{uA-axiom}=\rsdraw{.45}{.9}{uA-axiom3}=\rsdraw{.45}{.9}{uA-axiom2}$\;.
\end{center}
Note that algebras in $\cc$ are also called \emph{monoids} in $\cc$ in the literature.

\subsection{Categorical coalgebras}\label{sect-categorical-coalgebras}
A \emph{coalgebra} in a monoidal category $\cc$ is an algebra in   the opposite category $\cc^\opp=(\cc^\opp,\otimes,\un)$.  In other words,  a coalgebra in~$\cc$ is   an object~$A$ of~$\cc$ endowed with morphisms $\Delta\co A \to A \otimes A$ (the coproduct) and
 $\varepsilon\co A \to \un$ (the counit) such that
\begin{equation*}
(\Delta \otimes \id_A)\Delta=(\id_A \otimes \Delta)\Delta \quad \text{and} \quad (\id_A \otimes \varepsilon)\Delta=\id_A=(\varepsilon \otimes \id_A)\Delta.
\end{equation*}
We depict the coproduct and the counit as
\begin{center}
\psfrag{A}[Bc][Bc]{\scalebox{.8}{$A$}} $\Delta=$\rsdraw{.45}{.9}{cpA} \quad \text{and} \quad $\varepsilon=$\,\rsdraw{.45}{.9}{epsA}\;.
\end{center}
The axioms above  are  depicted as
\begin{center}
\psfrag{A}[Bc][Bc]{\scalebox{.8}{$A$}} $\rsdraw{.45}{.9}{cpA-axiom}=\rsdraw{.45}{.9}{cpA-axiom2}$ \quad \text{and} \quad $\rsdraw{.45}{.9}{epsA-axiom}=\rsdraw{.45}{.9}{uA-axiom3}=\rsdraw{.45}{.9}{epsA-axiom2}$\;.
\end{center}

For example, in a cartesian monoidal category (see Section~\ref{sect-cartesian-monoidal-categories}), any object is a coalgebra with coproduct being the diagonal map and counit  being the augmentation. In fact, any coalgebra in a cartesian monoidal category is of this form.

\subsection{Categorical bialgebras}\label{sect-categorical-bialgebras}
To define bialgebras in a monoidal category, we need compatibility conditions between product and coproduct, and the formulation of one of the  conditions requires a substitute for the flip map which can be provided by a braiding. A \emph{bialgebra}  in a braided category~$\bb$ is an object $A$ of~$\bb$ endowed with an algebra structure $(m,u)$ and a coalgebra structure $(\Delta,\varepsilon)$ in~$\bb$ satisfying the following conditions (expressing that $\Delta$ and $\varepsilon$ are algebra morphisms or, equivalently, that $m$ and $u$ are coalgebra morphisms):
\begin{align*}
\Delta m&=(m \otimes m)(\id_A \otimes \tau_{A,A} \otimes \id_A)(\Delta \otimes \Delta),  & \Delta u&=u \otimes u, \\  \varepsilon m&=\varepsilon \otimes \varepsilon, & \varepsilon u&=\id_\un,
\end{align*}
where $\tau$ is the braiding of $\bb$. Pictorially,
\begin{center}
\psfrag{A}[Bc][Bc]{\scalebox{.8}{$A$}} \psfrag{B}[Br][Br]{\scalebox{.8}{$A$}}
$\rsdraw{.45}{.9}{bialg-1} = \;\rsdraw{.45}{.9}{bialg-2} \;, \qquad \rsdraw{.39}{.9}{bialg-6} = \; \rsdraw{.39}{.9}{bialg-5} \,, \qquad \rsdraw{.45}{.9}{bialg-3} = \,\rsdraw{.45}{.9}{bialg-4}\,, \qquad\rsdraw{.4}{.9}{bialg-7} = \emptyset$.
\end{center}

\subsection{Categorical Hopf algebras}\label{sect-categorical-Hopf-algebras}
Let $\bb$ be a braided category. An \emph{antipode} for a bialgebra $A=(A, m,u, \Delta,\varepsilon)$ in $\bb$ is a morphism $S\co A \to A$ such that
\begin{equation*}
m(S \otimes \id_A)\Delta=u \varepsilon=m(\id_A \otimes S)\Delta.
\end{equation*}
This axiom is  depicted as
\begin{center}
\psfrag{A}[Bc][Bc]{\scalebox{.8}{$A$}}
$\rsdraw{.45}{.9}{SA-axiom}\,=\,\rsdraw{.45}{.9}{SA-axiom2}\,=\,\rsdraw{.45}{.9}{SA-axiom3}$ \quad \text{where} \;\;
$S=\rsdraw{.45}{.9}{SA}$\;.
\end{center}
If it exists, an antipode is unique and is anti-multiplicative and anti-comultiplicative:
\begin{equation*}
Sm=m(S \otimes S)\tau_{A,A},\quad Su=u, \quad \Delta S=\tau_{A,A} (S \otimes S)\Delta, \quad \varepsilon S=\varepsilon.
\end{equation*}
Pictorially,
\begin{center}
\psfrag{A}[Bc][Bc]{\scalebox{.8}{$A$}}
$\rsdraw{.45}{.9}{SA-ppte-1}=\;\,\rsdraw{.45}{.9}{SA-ppte-2}$  \;, \qquad
$\rsdraw{.4}{.9}{SA-ppte-3}\,=\,\rsdraw{.4}{.9}{SA-ppte-4}$\;, \qquad
$\rsdraw{.45}{.9}{SA-ppte-5}=\;\,\rsdraw{.45}{.9}{SA-ppte-6}$ \;, \qquad
$\rsdraw{.5}{.9}{SA-ppte-7}\,=\,\rsdraw{.5}{.9}{SA-ppte-8}$\;.
\end{center}
When the antipode $S$ is invertible,
we depict  its inverse   $S^{-1} \co A \to A$    as
\begin{center}
\psfrag{A}[Bc][Bc]{\scalebox{.8}{$A$}}
$S^{-1}= \rsdraw{.45}{.9}{SAi}$\,, \quad so that \; \, $\rsdraw{.45}{.9}{SA-ppte-11}=\,\rsdraw{.45}{.9}{SA-ppte-12}=\,\rsdraw{.45}{.9}{SA-ppte-13}$\;.
\end{center}

A \emph{Hopf algebra in $\bb$} is a bialgebra in $\bb$ which admits an invertible antipode.
Note that the notion of a Hopf algebra is self-dual:  $(A,m,u,\Delta,\varepsilon,S)$ is a Hopf algebra in~$\bb$ if and only if $(A,\Delta,\varepsilon,m,u,S)$ is a Hopf algebra in the opposite category~$\bb^\opp$.

A \emph{grouplike element} of Hopf algebra $A$ in $\bb$ is a morphism $G \co \un \to A$ such that
$$
\Delta G = G \otimes G \quad \text{and} \quad \varepsilon G=\id_\un.
$$
Such a $G$ is invertible in the monoid $(\Hom_\bb(\un,A),\ast,u)$, where $\alpha \ast \beta=m(\alpha \otimes \beta)$, and its inverse, denoted by $G^{-1}$, is also a grouplike element of $A$ and is computed by $G^{-1}=SG=S^{-1}G$. In particular, the set of grouplike elements of $A$ is a group (with product $\ast$ and unit $u$).

\subsection{Examples}\label{sect-exemples-Hopf-algebras}
1. Hopf algebras in the symmetric monoidal category of  \kt modules and \kt linear homomorphisms are  the usual Hopf \kt  algebras.

2. Hopf algebras in the symmetric monoidal category of super \kt modules and grading-preserving \kt linear homomorphisms are  the usual super Hopf \kt  algebras.

3. Any group object  in a cartesian monoidal category becomes a Hopf algebra, with its canonical coalgebra structure (see Section~\ref{sect-categorical-coalgebras}) and with antipode given by the group inversion. This induces a bijective correspondence between group objects and Hopf algebras in a cartesian monoidal category. For example, Hopf algebras in $\Set$ are groups. More generally, Hopf algebras in $\Cat$ are crossed modules, as detailed in Section~\ref{sect-crossed-modules-as-Hopf-algebras}.

\subsection{Modules in categories}\label{sect-modules-in-cat}
Let $(A,m,u)$ be an algebra in a monoidal category~$\cc$.
A \emph{left $A$-module} (in $\cc$) is a pair~$(M,r)$, where $M$ is an object of $\cc$ and $r\co A \otimes M \to M$ is a morphism in $\cc$, such that
\begin{equation*}
r(m \otimes \id_M)=r(\id_A \otimes r) \quad \text{and} \quad r(u \otimes \id_M)=\id_M.
\end{equation*}
Graphically, these conditions  are depicted as
\begin{center}
\psfrag{A}[Bc][Bc]{\scalebox{.8}{$A$}}\psfrag{N}[Bc][Bc]{\scalebox{.8}{$M$}}
$\rsdraw{.45}{.9}{rA-axiom}=\rsdraw{.45}{.9}{rA-axiom2}$ \quad \text{and} \quad $\rsdraw{.45}{.9}{rA-axiom4}=\,\rsdraw{.45}{.9}{rA-axiom5}$\;\;, \;\; \text{where} \;\;
$r=\rsdraw{.45}{.9}{rA-axiom3}$\;.
\end{center}
One can similarly introduce right $A$-modules,  but we will not use them. From now on, by an $A$-module, we mean a left $A$-module.

An \emph{$A$-linear morphism} between two $A$-modules $(M,r)$ and $(N,s)$  is a morphism $f\co M \to N$ in $\cc$ such that $fr=s(\id_A \otimes f)$, that is, pictorially,
\begin{center}
\psfrag{A}[Bc][Bc]{\scalebox{.8}{$A$}}\psfrag{B}[Bc][Bc]{\scalebox{.8}{$M$}}\psfrag{N}[Bc][Bc]{\scalebox{.8}{$N$}} \psfrag{f}[Bc][Bc]{\scalebox{.9}{$f$}}
$\rsdraw{.45}{.9}{morphA-axiom2}\;=\rsdraw{.45}{.9}{morphA-axiom}$\;.
\end{center}

We let  $\rmod{\cc}{(A)}$ be the category of $A$-modules and $A$-linear morphisms, with composition inherited from $\cc$.  The \emph{forgetful functor} $\rmod{\cc}{(A)} \to \cc$ carries  any $A$-module $(M,r)$ to $M$ and   any $A$-linear morphism to itself.

If  $A=(A, m,u, \Delta,\varepsilon)$ is  a bialgebra in a braided category $\bb$, then the category $\rmod{\bb}{(A)}$ of $A$-modules    has a canonical structure of a  monoidal  category. Its unit object is the pair   $(\un,\varepsilon)$.  Its monoidal product is given on the objects by
$$
(M,r) \otimes (N,s)=(M \otimes N,t)
$$
where
$$
t= (r \otimes s)(\id_A \otimes \tau_{A,M} \otimes \id_N)(\Delta \otimes \id_{M \otimes N}) =
\psfrag{X}[Bc][Bc]{\scalebox{.8}{$M$}} \psfrag{Y}[Bc][Bc]{\scalebox{.8}{$N$}} \psfrag{A}[Bc][Bc]{\scalebox{.8}{$A$}}
\psfrag{n}[Bc][Bc]{\scalebox{.9}{$r$}}\psfrag{s}[Bc][Bc]{\scalebox{.9}{$s$}}
\rsdraw{.45}{.9}{A-mod-monoidal-2}
$$
and on the morphisms by the monoidal product in $\cc$. Note that the forgetful functor $ \rmod{\bb}{(A)} \to \bb$ is   strict monoidal.

Assume that $\bb$ is a closed braided category. Then a bialgebra $A$ in $\bb$ is a Hopf algebra if and only if the monoidal category $\rmod{\bb}{(A)}$ is closed and the forgetful functor $\rmod{\bb}{(A)} \to \bb$ preserves the internal Homs. (This follows from Theorem~3.6 and Remark 5.6 of \cite{BLV}.) If such is the case, then the left and right internal Homs between $A$-modules $(M,r)$ and $(N,s)$ are
\begin{align*}
& \lhom{(M,r),(N,s)}=\left(\lhom{M,N}, \lhom{\id_M,\Theta_l}\lcoeva{M}{A\otimes \lhom{M,N}} \right), \\
& \rhom{(M,r),(N,s)}=\left(\rhom{M,N}, \rhom{\id_M,\Theta_r} \rcoeva{M}{A\otimes \rhom{M,N}}\right),
\end{align*}
with (co)evaluations inherited from $\bb$, where
$$
\psfrag{a}[cc][cc]{\scalebox{.9}{$\leva{M}{N}$}}
\psfrag{X}[Bc][Bc]{\scalebox{.8}{$\lhom{M,N}$}}
\psfrag{Y}[Bc][Bc]{\scalebox{.8}{$M$}}
\psfrag{Z}[Bc][Bc]{\scalebox{.8}{$N$}}
\psfrag{A}[Bc][Bc]{\scalebox{.8}{$A$}}
\Theta_l=\;\rsdraw{.45}{.9}{closed-l}
\qquad \text{and} \qquad
\psfrag{a}[cc][cc]{\scalebox{.9}{$\reva{M}{N}$}}
\psfrag{X}[Bc][Bc]{\scalebox{.8}{$\rhom{M,N}$}}
\psfrag{Y}[Bc][Bc]{\scalebox{.8}{$M$}}
\psfrag{Z}[Bc][Bc]{\scalebox{.8}{$N$}}
\psfrag{A}[Bc][Bc]{\scalebox{.8}{$A$}}
\Theta_r=\rsdraw{.45}{.9}{closed-r}\;.
$$

Assume that $\bb$ is a rigid braided category. Then a bialgebra $A$ in $\bb$ is a Hopf algebra if and only if the monoidal category $\rmod{\bb}{(A)}$ is rigid. (This follows from Theorem 3.8 and Example 3.10 of \cite{BV}.)
If such is the case, then the left and right duals of an $A$-module $(M,r)$ are
$$
\ldual{(M,r)}=(\ldual{M},\leftidx{^\circ}{r}{}) \quad \text{and} \quad \rdual{(M,r)}=(\rdual{M},r^\circ)
$$
where
$$
\leftidx{^\circ}{r}{}=\!\!
\psfrag{a}[cc][cc]{\scalebox{.9}{$\lev_{M}$}}
\psfrag{u}[cc][cc]{\scalebox{.9}{$\lcoev_{M}$}}
\psfrag{X}[Bc][Bc]{\scalebox{.8}{$\ldual{M}$}}
\psfrag{A}[Bc][Bc]{\scalebox{.8}{$A$}}
\psfrag{n}[Bc][Bc]{\scalebox{.9}{$r$}}
\rsdraw{.45}{.9}{A-mod-left-dual-2}
\quad \;\text{and} \;\quad r^\circ=\!\!
\psfrag{a}[Bc][Bc]{\scalebox{.9}{$\rev_{M}$}}
\psfrag{u}[Bc][Bc]{\scalebox{.9}{$\rcoev_{M}$}}
\psfrag{X}[Bc][Bc]{\scalebox{.8}{$\rdual{M}$}}
\psfrag{A}[Bc][Bc]{\scalebox{.8}{$A$}}
\psfrag{n}[Bc][Bc]{\scalebox{.9}{$r$}}
\rsdraw{.45}{.9}{A-mod-right-dual-2}\; ,
$$
with (co)evaluations inherited from $\bb$:
\begin{align*}
\lev_{(M,r)} & =\lev_M, && \lcoev_{(M,r)}=\lcoev_M, \\
\rev_{(M,r)} & =\rev_M, && \rcoev_{(M,r)}=\rcoev_M.
\end{align*}

Assume that $\bb$ is a pivotal braided category. The (right) \emph{twist} of $\bb$ is the natural automorphism  $\theta=\{\theta_X \co X \to X\}_{X \in \bb}$ defined by
$$
\theta_X  =\!
\psfrag{X}[Br][Br]{\scalebox{.8}{$X$}}\rsdraw{.42}{.9}{theta-right}\;=(\id_X
\otimes \rev_X)(\tau_{X,X} \otimes \id_{X^*})(\id_X \otimes
\lcoev_X).
$$
Let $A$ be a Hopf algebra in $\bb$. Then the pivotal structures on the monoidal category  $\rmod{\bb}{(A)}$ are in bijection with the pairs $(G,\gamma)$, where~$G$ is a grouplike element of $A$ (see Section~\ref{sect-categorical-Hopf-algebras}) and $\gamma=\{\gamma_X \co X \to X\}_{X \in \bb}$ is a monoidal natural automorphism, 
such that the square of the antipode $S$ of $A$ satisfies
$$
S^2= \theta_A \circ \mathrm{Ad}_G  \circ \gamma_A \quad \text{where} \quad
\psfrag{A}[Bl][Bl]{\scalebox{.8}{$A$}}
\psfrag{G}[Bc][Bc]{\scalebox{.9}{$G$}}
\psfrag{H}[Bc][Bc]{\scalebox{.9}{$G^{-1}$}}
\psfrag{a}[Bc][Bc]{\scalebox{.9}{$\theta_A$}}
\psfrag{x}[Bc][Bc]{\scalebox{.9}{$\gamma_A$}}
\mathrm{Ad}_G= \,\rsdraw{.45}{.9}{pivotal-A-S2} \;\,.
$$
(This follows from Proposition 7.6 and Example 7.2 of \cite{BV}.)
The pivotal structure on $\rmod{\bb}{(A)}$ associated with such a pair $(G,\gamma)$ is given for any $A$-module $(M,r)$~by
$$
(M,r)^*=(M^*,r^\dagger) \quad \text{where} \quad
\psfrag{X}[Bl][Bl]{\scalebox{.8}{$M$}}
\psfrag{Y}[Br][Br]{\scalebox{.8}{$M$}}
\psfrag{A}[Br][Br]{\scalebox{.8}{$A$}}
\psfrag{n}[Bc][Bc]{\scalebox{.9}{$r$}}
r^\dagger=\!\!\!\rsdraw{.45}{.9}{A-mod-left-dual-pivotal-2}\;, 
$$
with left (co)evaluations given by $\lev_{(M,r)} =\lev_M$ and $\lcoev_{(M,r)}=\lcoev_M$, and right (co)evaluations given by
$$
\psfrag{X}[Bl][Bl]{\scalebox{.8}{$M$}}
\psfrag{A}[Br][Br]{\scalebox{.8}{$A$}}
\psfrag{G}[Bc][Bc]{\scalebox{.9}{$G$}}
\psfrag{H}[Bc][Bc]{\scalebox{.9}{$G^{-1}$}}
\psfrag{u}[Bc][Bc]{\scalebox{.9}{$\gamma_M$}}
\psfrag{v}[Bc][Bc]{\scalebox{.9}{$\gamma_M^{-1}$}}
\rev_{(M,r)}=\;\rsdraw{.45}{.9}{pivotal-A-1} \quad \text{and} \quad\; \rcoev_{(M,r)}=\,\rsdraw{.45}{.9}{pivotal-A-2} \;.
$$
Note that the forgetful functor $\rmod{\bb}{(A)} \to \bb$ is then pivotal if and only if for all $A$-module $(M,r)$,
$$
\psfrag{X}[Bl][Bl]{\scalebox{.8}{$M$}}
\psfrag{A}[Br][Br]{\scalebox{.8}{$A$}}
\psfrag{G}[Bc][Bc]{\scalebox{.9}{$G$}}
\psfrag{u}[Bc][Bc]{\scalebox{.9}{$\gamma_M$}}
\rsdraw{.45}{.9}{pivotal-A-3} \; = \;\; \rsdraw{.45}{.9}{pivotal-A-4} \;\;.
$$
In particular, if $A$ is \emph{involutory} in the sense that  its antipode~$S$ satisfies $S^2=\theta_A$, then the monoidal category  $\rmod{\bb}{(A)}$ carries a structure of a pivotal category so that the forgetful functor $\rmod{\bb}{(A)} \to \bb$ is pivotal (by taking $G=u$ and $\gamma=\id_\bb$).

\subsection{Comodules in categories}\label{sect-comodules-in-cat}
Given a coalgebra $A$ in a monoidal category~$\cc$ (which is an algebra in the opposite category $\cc^\opp$), we define the category of left $A$-comodules in $\cc$ by setting $$\mathrm{Comod}_\cc(A)=\bigl(\mathrm{Mod}_{\cc^\opp}(A)\bigr)^\opp.$$
In particular its objects are pairs~$(M,\delta)$, with $M$ an object of $\cc$ and $\delta\co M \to A \otimes M$ a morphism in $\cc$ such that
\begin{equation*}
(\Delta \otimes \id_M)\delta=(\id_A \otimes \delta)\delta \quad \text{and} \quad (\varepsilon \otimes \id_M)\delta=\id_M.
\end{equation*}
We deduce from Section~\ref{sect-modules-in-cat} that the category $\mathrm{Comod}_\cc(A)$ is monoidal when $\cc$ is braided and $A$ is a bialgebra, and that $\mathrm{Comod}_\cc(A)$ is rigid (resp.\@ closed) when $\cc$ is braided rigid (resp.\@ braided closed) and $A$ is a Hopf algebra. Also,  if $\cc$ is pivotal braided and $A$ is a Hopf algebra, then the pivotal structures on the monoidal category  $\mathrm{Comod}_\cc(A)$ are in bijection with the pairs $(\alpha,\gamma)$, where~$\alpha \co A \to \un$ is an algebra morphism and $\gamma=\{\gamma_X \co X \to X\}_{X \in \bb}$ is a monoidal natural automorphism, 
such that $S^2= \theta_A \circ \mathrm{Ad}_\alpha \circ \gamma_A$, where $\theta$ is the twist of $\cc$ and
$$
\psfrag{A}[Bl][Bl]{\scalebox{.8}{$A$}}
\psfrag{G}[Bc][Bc]{\scalebox{.9}{$\alpha$}}
\mathrm{Ad}_\alpha= \,\rsdraw{.45}{.9}{pivotal-A-S2co} \;\,.
$$

\section{Crossed modules}\label{sect-4}
In this section, we quickly review crossed modules and their relationship with Hopf algebras in the category of small categories.

\subsection{Crossed modules}\label{sect-crossed-modules-def}
A \emph{crossed module} is a group homomorphism $\CM \co E \to H$ together with a left  action of $H$ on $E$ (by group automorphisms) denoted  $$(x,e) \in H \times E  \mapsto \elact{x}{e} \in E $$ such that $\CM$ is equivariant with respect to the conjugation action of $H$ on itself and satisfies the Peiffer identity, that is, for all $x \in H$ and $e,f \in E$,
$$
\CM(\elact{x}{e})=x\CM(e)x^{-1} \quad \text{and} \quad \lact{{\CM(e)}}{\!f}=efe^{-1}.
$$
These axioms imply that the image $\Ima(\CM)$ is normal in $H$ and that the kernel $\Ker(\CM)$ is central in $E$ and is acted on trivially by $\Ima(\CM)$. In particular,  $\Ker(\CM)$ inherits an action of $H/\Ima(\CM)=\Coker(\CM)$.

A \emph{morphism} from a crossed module $\CM \co E \to H$ to a crossed module $\CM' \co E' \to H'$ is a pair $(\psi \co E \to E',\varphi \co H \to H')$
of group homomorphisms such that
$$
\CM'\bigl(\psi(e) \bigr)=\varphi \bigl( \CM(e) \bigr) \quad \text{and} \quad \psi(\elact{x}{e})=\lact{\varphi(x)}{\psi(e)}
$$
for all $e\in E$ and $x \in H$.

\subsection{Examples}\label{sect-crossed-modules-ex}
1. Given any  normal subgroup $E$ of a group $H$, the inclusion $E\hookrightarrow H$ is a crossed module with the conjugation action of $H$ on $E$.

2. For any group $E$, the homomorphism $E \to \Aut(E)$ sending any element of $E$ to the corresponding inner automorphism is a crossed module.

3. If $E$ is an abelian group, then the trivial map $E \to 1$ is a crossed module.

4. A key geometric example of a crossed module is due to Whitehead: if $X$ is a topological space, $Y$ is a subspace of $X$, and $y$ is a point of $Y$, then the homotopy boundary map $\partial \co \pi _{2}(X,Y,y)\rightarrow \pi _{1}(Y,y)$,
together with the standard action of $\pi _{1}(Y,y)$ on $\pi _{2}(X,Y,y)$,  is a crossed module.

\subsection{Crossed modules as Hopf algebras}\label{sect-crossed-modules-as-Hopf-algebras}
By \cite{BS}, crossed modules are group objects in the category $\Cat$ of small categories and functors (endowed with its canonical cartesian monoidal structure). Also, by the third example in Section \ref{sect-exemples-Hopf-algebras}, there is a bijective correspondence between group objects and Hopf algebras in a cartesian monoidal category.  Consequently, Hopf algebras in $\Cat$ are in bijective correspondence with crossed modules.

Explicitly, the Hopf algebra
$\tg_\CM=(\tg_\CM,m_\CM,u_\CM,\Delta_\CM,\varepsilon_\CM,S_\CM)$ in $\Cat$ associated with a crossed module $\CM \co E \to H$  is described as follows.  The objects of~$\tg_\CM$  are the elements of~$H$. For any objects $x,y \in H$,
$$
\Hom_{\tg_\CM}(x,y)=\{e \in E \, | \, y=\CM(e)x\}.
$$
The composition of morphisms is given by the product of $E$:
$$
\left(y \xrightarrow{f} z \right) \circ \left(x \xrightarrow{e} y \right) = \left(x \xrightarrow{fe} z \right) \quad \text{and} \quad \id_x=1 \in E.
$$
Note that $\tg_\CM$ is a groupoid. The product $m_\CM \co \tg_\CM \times \tg_\CM \to \tg_\CM$ of $\tg_\CM$ is defined on objects  and morphisms by
$$
m_\CM(x,y)=xy \quad \text{and} \quad
m_\CM \!\left(x \xrightarrow{e} y ,z \xrightarrow{f} t \right) = \left(xz \xrightarrow{e\lact{x}{\!f}} yt \right).
$$
Denote by $\mathbf{1}$ the trivial category with a single object $\ast$ and a single morphism $\id_\ast$. The unit $u_\CM \co \mathbf{1} \to \tg_\CM$
is defined by
$$
u_\CM (\ast)=1 \in H.
$$
The coproduct $\Delta_\CM \co \tg_\CM \to \tg_\CM \times \tg_\CM $ and counit $\varepsilon_\CM \co \tg_\CM \to \mathbf{1}$ are the diagonal and augmentation: for any object $x$ and morphism $e$,
$$
\Delta_\CM (x)=(x,x), \quad \Delta_\CM (e)=(e,e), \quad \varepsilon_\CM (x)=\ast, \quad \varepsilon_\CM (e)=\id_\ast.
$$
The antipode $S_\CM \co \tg_\CM \to\tg_\CM$ is involutive ($S_\CM^{-1}=S_\CM$) and is computed by
$$
S_\CM(x)=x^{-1} \quad \text{and} \quad
S_\CM \!\left(x \xrightarrow{e} y \right) = \left(x^{-1} \xrightarrow{\lact{x^{-1}}{\!(e^{-1})}} y^{-1} \right).
$$

\section{Graded monoidal categories}\label{sect-5}

In this section, we review the notions of a monoidal category graded by a group or a crossed module. We refer to \cite{SV} for details.

\subsection{Linear categories}\label{sect-linear-categories}
A category~$\cc$ is \emph{\kt linear}  if for all objects $X,Y\in \cc$, the set $\Hom_\cc(X,Y)$ carries a structure of a left  \kt module so that  the composition of morphisms  is $\kk$-bilinear. An object $X$ of a \kt linear category $\cc$ is called a \emph{zero object}  if $\id_X=0$.
A zero object, if it exists, is unique up to  isomorphism.

A monoidal category is  \emph{$\kk$-linear} if it is $\kk$-linear as a category and the monoidal product of morphisms
is \kt bilinear.

\subsection{Monoidal categories graded by a group}\label{sect-monoidal-graded-groups}
Let $H$ be a group. An \emph{$H$-graded monoidal category (over $\kk$)} is a $\kk$-linear  mo\-noi\-dal category $\cc=(\cc,\otimes,\un)$ endowed with a family $\{\cc_h\}_{h \in H}$ of full subcategories such that:
\begin{enumerate}
  \labela
  \item  Each object of $\cc$ is a direct sum of objects in $\bigcup_{h \in H}\cc_h$.
  \item  For all $X\in \cc_h$ and $Y\in \cc_k$ with $h\neq k$, $$\Hom_{\cc} (X,Y)=0.$$
  \item  For all $X\in \cc_h$ and $Y\in \cc_k$, we have:  $X \otimes Y \in \cc_{hk}$.
  \item  $\un \in \cc_1$.
\end{enumerate}
In this case, we write by abuse\footnote{This is a genuine direct sum when $\cc$ is additive and each $\cc_h$ contains a zero object.} of notation:
$$
\cc=\bigoplus_{h \in H} \cc_h.
$$
The monoidal subcategory $\cc_1$ is called the \emph{neutral component} of $\cc$.
An object $X$ of~$\cc$ is   \emph{homogeneous} if it is nonzero and  $X\in \cc_h$ for some $h\in H$. Such an $h$  is then uniquely determined by $X$, denoted by  $|X|$, and called the \emph{degree} of $X$.

\subsection{Hom-graded categories}\label{sect-Hom-graded-categories}
Let $E$ be a group with unit $1$. An \emph{$E$-Hom-graded category (over $\kk$)} is a category enriched over the monoidal category of
$E$-graded \kt modules  and  \kt linear  grading-preser\-ving homomorphisms. Explicitly, this is a 
\kt linear category $\cc$ such that:
\begin{enumerate}
\labela
\item The Hom-sets in $\cc$ are $E$-graded \kt modules: for all objects $X,Y \in \cc$,
$$
\Hom_\cc(X,Y)=\bigoplus_{e \in E} \Hom_\cc^e(X,Y).
$$
\item The composition in $\cc$ is multiplicative with respect to the degree: for all $e,f \in E$ and  $X,Y, Z \in \cc$, it sends $\Hom_\cc^f(Y,Z) \times \Hom_\cc^e(X,Y)$ into $\Hom_\cc^{fe}(X,Z)$.
\item The identities have trivial degree: for all $X\in \cc$, $$\id_X \in \End_\cc^1(X)=\Hom_\cc^1(X,X).$$
\end{enumerate}

Let $\cc$ be an $E$-Hom-graded category.
A morphism $\alpha \co X \to Y$ in $\cc$ is \emph{homogeneous of degree $e \in E$} if $\alpha \in \Hom_\cc^e(X,Y)$. Note that if $\alpha$ is nonzero, then such an $e \in E$ is unique, is called the \emph{degree} of $\alpha$, and is denoted $e=|\alpha|$. The objects of $\cc$ together with the homogenous morphisms of degree 1 form a \kt linear subcategory of $\cc$ called the \emph{1-subcategory} of $\cc$ and denoted $\cc^1$.

Given $e \in E$, by an \emph{$e$-isomorphism} we mean an isomorphism which is homogeneous of degree~$e$. We say that an object $X$ is \emph{$e$-isomorphic} to an object $Y$ if there is an $e$-isomorphism $X \to Y$.

Given $e \in E$, an object~$D$ of~$\cc$ is an \emph{$e$-direct sum} of a finite family $(X_a)_{a \in A}$ of objects of~$\cc$ if there is a family $(p_a, q_a)_{a \in A}$ of morphisms such that for all~$a,b \in A$,  $p_a\co D \to X_a$ is homogeneous of degree $e^{-1}$,  $q_a \co X_a \to D$ is homogeneous of degree~$e$, $p_a q_b=\delta_{a,b} \id_{X_a}$, and $\id_D=\sum_{a \in A} q_a p_a$.
Such an $e$-direct sum $D$, if it exists, is  unique up to a 1-isomorphism and is denoted  by
$$
D=  \bigoplus_{a \in A}^e X_a.
$$
Note that for any finite families $(X_a)_{a \in A}$  and $(Y_b)_{b \in B}$  of objects of~$\cc$ and for any $d,e,f \in E$, there are \kt linear isomorphisms
\begin{equation}\label{eq-Hom-direct-sum}
\Hom_\cc^d \left( \bigoplus_{a \in A}^e X_a,\bigoplus_{b \in B}^f Y_b \right )  \cong
\bigoplus_{\substack{a \in A \\ b \in B}} \Hom_\cc^{f^{-1}de}\bigl(X_a,Y_b\bigr).
\end{equation}
By definition, a direct sum of an empty family of objects of a \kt linear category $\cc$ is a \emph{zero object}   of $\cc$, that is, an object $\mathbf{0}$ of $ \cc$ such that $\End_\cc(\mathbf{0})=0$.

An $E$-Hom-graded category~$\cc$ is \emph{$E$-additive} if any finite (possibly empty) family of objects of~$\cc$ has an $e$-direct sum in~$\cc$ for all $e \in E$.

\subsection{Monoidal categories graded by a crossed module}\label{sect-monoidal-graded-crossed-modules}
Let $\CM \co E \to H$ be a crossed module.
A \emph{$\CM$-graded monoidal category (over $\kk$)}, or shorter a  \emph {$\CM$-category}, is a $\kk$-linear  mo\-noi\-dal category $\cc=(\cc,\otimes,\un)$ such that:
\begin{enumerate}
  \labela
  \item  The $\kk$-linear category $\cc$ is $E$-Hom-graded (see Section~\ref{sect-Hom-graded-categories}).
  \item  The associativity constraints $(X\otimes Y)\otimes Z\cong
X\otimes (Y\otimes Z)$ and the unitality constraints $X\otimes \un
\cong X\cong \un \otimes X$ of $\cc$ are all homogenous of degree $1 \in E$.
\item The 1-subcategory~$\cc^1$ of $\cc$ (endowed with the monoidal structure induced by $\cc$) is $H$-graded (see Section~\ref{sect-monoidal-graded-groups}).
\end{enumerate}
These data should satisfy two conditions relating the degree of objects and morphisms. To express them, we say that an object $X$ of $\cc$ is \emph{homogeneous} if it is homogeneous in $\cc^1$ (see Section~\ref{sect-monoidal-graded-groups}). Recall that the degree of a homogeneous object $X$ is denoted by $|X| \in H$ and that the degree of a nonzero homogeneous morphism $\alpha$ is denoted by $|\alpha| \in E$. The two conditions are:
\begin{enumerate}
  \labela
  \setcounter{enumi}{3}
  \item  For all homogenous objects $X,Y$ and $e \in E$ such that $|Y|\neq \CM(e)|X|$,
  $$
  \Hom_\cc^e(X,Y)=0.
  $$
  \item The monoidal product $\alpha \otimes \beta$ of any two nonzero homogeneous morphisms $\alpha,\beta$ is a homogeneous morphism of degree
$$
|\alpha\otimes \beta|=|\alpha|\lact{|s(\alpha)|}{|\beta|},
$$
whenever the source $s(\alpha)$ of $\alpha$ is a homogeneous object. In other words, for any objects $X,Y,Z,T$ with $X$ homogeneous  and for any morphisms $\alpha \in \Hom_\cc^e(X,Y)$, $\beta \in \Hom_\cc^f(Z,T)$ with $e,f \in E$, we have:
$$
\alpha \otimes \beta \in  \Hom_\cc^{e\lact{|X|}{\!f}}(X\otimes Z,Y \otimes T).
$$
\end{enumerate}

Observe that this definition of a $\CM$-category is equivalent to the definition given in~\cite{SV} (by taking $\cch$ to be the class of homogenous objects of $\cc$ and the degree map $|\cdot| \co\cch \to H$ to be the degree of homogeneous objects).

Note that the convention of Section~\ref{sect-conventions} remains valid since Axiom (b) implies that the suppression of the associativity  and unitality  constraints does not change the degree of morphisms. Also, it follows from the axioms of an $H$-graded category that each object of $\cc$ is a $1$-direct sum of a finite family of homogeneous objects. In particular, the Hom-sets in $\cc$ are fully determined by the Hom-sets between homogeneous objects. 
Axiom (d) implies that for any homogenous objects $X,Y$,  
$$
\Hom_{\cc}(X,Y)= \hspace{-1.8em} \bigoplus_{e \in \CM^{-1}(|Y||X|^{-1})} \hspace{-1.8em} \Hom_\cc^e(X,Y) \quad \text{and} \quad \End_{\cc}(X)=\!\!\bigoplus_{e \in \Ker(\CM)} \!\!\End_\cc^e(X).
$$
In particular $\Hom_\cc(X,Y)=0$ whenever $|Y||X|^{-1} \not  \in \Ima(\CM)$.
Also, if $\alpha \co X \to Y$ is a nonzero homogeneous morphism between homogeneous objects, then
$$
|Y|=\CM(|\alpha|)\,|X|.
$$
In particular, 1-isomorphic homogenous  objects have the same degree.

A $\CM$-category $\cc$ is \emph{closed} if it is closed as a monoidal category (see Section~\ref{sect-closed}) and all (co)evaluations are homogenous of degree $1 \in E$. Note that a $\CM$-category $\cc$ is closed if and only if its 1-subcategory $\cc^1$ is closed.

A $\CM$-category $\cc$ is \emph{rigid} if it is rigid as a monoidal category (see Section~\ref{sect-rigid}) and all (co)evaluations are homogenous of degree $1 \in E$. Note that a $\CM$-category $\cc$ is rigid if and only if its 1-subcategory $\cc^1$ is rigid.

A $\CM$-category $\cc$  is \emph{pivotal} if it is endowed with a pivotal structure (see Section~\ref{sec-pivot}) such that the dual $X^*$ of any homogenous object $X$ is a homogenous object of degree $|X^*|=|X|^{-1}$ and all (co)evaluations  are homogenous of degree $1 \in E$.
Note that a $\CM$-category $\cc$ is pivotal if and only if its 1-subcategory $\cc^1$ is pivotal.

\subsection{Example}\label{ex-kk-GXi}
The linearization $\kk\tg_\CM$ of the groupoid $\tg_\CM$ associated with a crossed module $\CM\co E \to H$ (see Section~\ref{sect-crossed-modules-as-Hopf-algebras}) is a $\CM$-graded monoidal category. The objects of $\kk\tg_\CM$ are the elements of~$H$. Each $x \in H$ is homogeneous with degree $|x|=x$.
For any $x,y \in H$ and $e\in E$,
$$
\Hom_{\kk\tg_\CM}^e(x,y)=\left\{ \begin{array}{ll} \kk e & \text{if $y=\CM(e)x$,} \\ 0 & \text{otherwise.} \end{array} \right.
$$
The composition of morphisms is induced by the product of $E$. The monoidal product of $\kk\tg_\CM$ is the linear extension of the product of the groupoid $\tg_\CM$:
$$
x \otimes y=xy \quad \text{and} \quad \left(x \xrightarrow{e} y\right) \otimes \left(z \xrightarrow{f} t \right) = \left(xz \xrightarrow{e\lact{x}{\!f}} yt \right).
$$
The $\CM$-category $\tg_\CM$  is pivotal. Its pivotal structures are parameterized by the group homomorphisms $d \co H \to \kk^*$. Given such a $d$, the dual of an object $x \in H$ is $x^*=x^{-1}$ with left and right (co)evaluations given by
$$
\lev_x=1, \quad \lcoev_x=1, \quad \rev_x=d(x) 1, \quad \rcoev_x=d(x)^{-1} 1,
$$
where  $1$ is the unit element of $E$.

\subsection{Particular cases}\label{sect-part-cases}
1. Given a group $H$, the trivial map $1 \to H$ is a crossed module and the notion of a $(1\to H)$-category agrees with that of an $H$-graded monoidal category.

2. Given an abelian group $E$, the trivial map $E \to 1$ is a crossed module and the notion of an $(E\to 1)$-category  agrees with that of a \kt linear monoidal category such that the Hom-sets are $E$-graded \kt modules, the composition and monoidal product of morphisms are
$E$-multiplicative (i.e., multiplicative with respect to the degree of morphisms),  and the identities and monoidal constraints are of degree $1 \in E$. In other words, $(E\to 1)$-categories are monoidal categories enriched over the symmetric category of $E$-graded \kt modules.

\section{Hopf group-coalgebras and their modules}\label{sect-Hopf-group-coalgebra}
Throughout this section, $H$ denotes a group with neutral element $1$. We review the definitions and basic properties of Hopf $H$-coalgebras and their modules (referring to \cite{Vi2} for details).

\subsection{Group-coalgebras}\label{sect-label-H-coalgebras}
An \emph{$H$-coalgebra} (over $\kk$) is a family $A=\{A_x\}_{x \in H}$ of \kt modules endowed with a family $\Delta=\{\Delta_{x,y}\co A_{xy} \to A_x \otimes A_y \}_{x,y \in H}$ of \kt linear homomorphisms (the \emph{coproduct}) and a \kt linear homomorphism $\varepsilon\co A_1 \to \kk$ (the \emph{counit}) which are coassociative and counital in the sense that for all $x,y,z \in H$,
$$
(\Delta_{x,y}\otimes \id_{A_z}) \Delta_{xy,z}=(\id_{A_x} \otimes \Delta_{y,z}) \Delta_{x,yz}
$$
and
$$
(\id_{A_x} \otimes \varepsilon) \Delta_{x,1}=\id_{A_x}=(\varepsilon \otimes \id_{A_x}) \Delta_{1,x}.
$$
Note  that $(A_1,\Delta_{1,1},\varepsilon)$  is a coalgebra (over $\kk$) in the usual sense.

To any $H$-coalgebra $A=(\{A_x\}_{x \in H}, \Delta,\varepsilon)$ and any \kt algebra $B$, one associates the $H$-graded \kt algebra
$$
\conv(A,B)=\bigoplus_{x \in H}\Hom_\kk(A_x,B),
$$
called the \emph{convolution algebra}, whose unit is $\varepsilon 1_B$ and whose product is defined by
$$
 f*g=m(f \otimes g) \Delta_{x,y} \in \Hom_\kk(A_{xy},B)
$$
for all $f \in \Hom_\kk(A_x,B)$ and $g \in \Hom_\kk(A_y,B)$, where $m$ and $1_B$ are the product and unit of $B$.

\subsection{Group-bicoalgebras}\label{sect-bi-H-coalgebras}
An \emph{$H$-bicoalgebra} (over $\Bbbk$) is an $H$-coalgebra (over $\kk$)
$
A=(\{A_x\}_{x \in H}, \Delta,\varepsilon)
$
such that each $A_x$ is endowed with a structure of a \kt algebra 
so that $\varepsilon$ and~$\Delta_{x,y}$ are algebra homomorphisms for all $x,y \in H$,  that is,
\begin{align*}
&\Delta_{x,y} \,\mu_{xy}=(\mu_x\otimes \mu_y)(\id_{A_x} \otimes \sigma_{A_y ,A_x} \otimes \id_{A_y})(\Delta_{x,y} \otimes \Delta_{x,y}),
&& \varepsilon \mu_1=\varepsilon \otimes \varepsilon, \\
& \Delta_{x,y}(1_{xy})=1_x \otimes 1_y,  && \varepsilon (1_1)=1_\kk,
\end{align*}
where $\mu_x\co A_x \otimes A_x \to A_x$ and $1_x \in A_x$ denote the product and the unit element of~$A_x$. Here and below,  for \kt modules $M$ and $N$, the flip $\sigma_{M,N}\co M \otimes N \to N \otimes M$ is defined by
$\sigma_{M,N}(u \otimes v)=v \otimes u$ for all $u \in M$ and $v \in N$.

\subsection{Antipodes}\label{sect-Hopf-H-coalgebras-antipodes}
An \emph{antipode} for an $H$-bicoalgebra $A=(\{A_x\}_{x \in H}, \Delta,\varepsilon)$ is a family $S=\{S_x \co  A_{x^{-1}} \to A_x \}_{x \in H}$ of \kt linear homomorphisms such that for all $x \in H$,
$$
\mu_x (S_x \otimes \id_{A_x}) \Delta_{x^{-1},x} =   \eta_x \varepsilon  =  \mu_x (\id_{A_x} \otimes S_x) \Delta_{x,x^{-1}}
$$
where $\mu_x$ and $\eta_x=(1_\kk \in \kk \mapsto 1_x \in A_x)$ are the product and the  unit map of $A_x$. This axiom means that for any $x \in H$, the homomorphism $S_x$ is the inverse of $\id_{A_x}$ in the convolution algebra $\conv(A,A_x)$. An antipode $S=\{S_x\}_{x \in H}$ is \emph{bijective} if~$S_x$ is bijective for all $x \in H$.

If it exists, an antipode is unique. Also, it is anti-multiplicative: for all $x \in H$,
$$
S_x \mu_{x^{-1}}=\mu_x\sigma_{A_x,A_x}(S_x \otimes S_x) \quad \text{and} \quad S_x(1_{x^{-1}})=1_x,
$$
and anti-comultiplicative:  for all $x,y \in H$,
$$
\Delta_{x,y} S_{xy}=(S_x \otimes S_y)\sigma_{A_{y^{-1}},A_{x^{-1}}} \Delta_{y^{-1},x^{-1}} \quad \text{and} \quad  \varepsilon S_1=\varepsilon.
$$

\subsection{Hopf group-coalgebras}\label{sect-Hopf-H-coalgebras}
A \emph{Hopf $H$-coalgebra} is an $H$-bicoalgebra endowed with a bijective antipode. When $H=1$, one recovers the usual notion of a Hopf algebra. In particular, if~$A$ is a Hopf $H$-coalgebra, then  $A_1$ is a Hopf algebra.

The product $\mu_x$, unit map $\eta_x$, 
coproduct $\Delta_{x,y}$,  counit $\varepsilon$,  antipode $S_x$ and its inverse $S_x^{-1}$ of a Hopf $H$-coalgebra $A=\{A_x\}_{x \in H}$ are depicted as follows:
$$
\psfrag{a}[Br][Br]{\scalebox{.8}{$x$}}
\psfrag{x}[Bl][Bl]{\scalebox{.8}{$x$}}
\psfrag{y}[Br][Br]{\scalebox{.8}{$y$}}
\psfrag{n}[Bl][Bl]{\scalebox{.8}{$y$}}
\psfrag{z}[Bl][Bl]{\scalebox{.8}{$xy$}}
\psfrag{u}[Bl][Bl]{\scalebox{.8}{$1$}}
\psfrag{c}[Bl][Bl]{\scalebox{.8}{$x^{-1}$}}
\mu_x=\!\rsdraw{.45}{.9}{H-mAx} \quad
\eta_x=\,\rsdraw{.45}{.9}{H-uAx} \quad
\Delta_{x,y}=\!\rsdraw{.45}{.9}{H-cpAxy}\quad
\varepsilon=\,\rsdraw{.45}{.9}{H-epsA} \quad
S_x=\;\,\rsdraw{.45}{.9}{H-SAx}\quad\;
\psfrag{x}[Bl][Bl]{\scalebox{.8}{$x^{-1}$}}
\psfrag{c}[Bl][Bl]{\scalebox{.8}{$x$}}
S_{x}^{-1}=\;\,\rsdraw{.45}{.9}{H-SAxinv}\;.
$$
Here the colors $x,y \in H$ are abbreviations for $A_x$ and $A_y$. The axioms of a Hopf $H$-coalgebra are then easily depicted in a manner similar to that of Sections~\ref{sect-categorical-algebras}-\ref{sect-categorical-Hopf-algebras}.

An \emph{$H$-grouplike element} of a Hopf $H$-coalgebra $A$ is a family $G=(G_x)_{x \in H}$ with $G_x \in A_x$ such that for all $x,y \in H$,
$$
\Delta_{x,y}(G_{xy})=G_x \otimes G_y \quad \text{and} \quad \epsilon(G_1)=1_\kk.
$$
Note that each $G_x$ is then invertible in $A_x$ with inverse 
$$
G_x^{-1}=S_x(G_{x^{-1}})=S_{x^{-1}}^{-1}(G_{x^{-1}}),
$$
where $S=\{S_x\}_{x \in H}$ is the antipode of $A$. 
The set $G_H(A)$ of $H$-grouplike elements of $A$ is thus a group for the pointwise product.

\subsection{Modules over Hopf group-coalgebras}\label{sect-modules-coalgebras}
Any $H$-bicoalgebra $A=\{A_x\}_{x \in H}$ (over $\kk$) yields the $H$-graded \kt linear monoidal category $\Mod_H(A)$ of $A$-modules and $A$-linear morphisms defined as follows.

A \emph{(left) $A$-module} is an $H$-graded \kt module $$M=\bigoplus_{x \in H} M_x$$ such that each $M_x$ is endowed with a structure of a (left) module over the \kt algebra~$A_x$. 

An \emph{$(H,A)$-linear morphism} between two $A$-modules  $M,N$ is a \kt linear  homo\-mor\-phism $\alpha \co M \to N$ such that:
\begin{enumerate}
\labela
\item  The map $\alpha$ preserves the $H$-grading: $\alpha (M_x) \subset N_x$ for all $x \in H$.
\item For all $x \in H$, the map $m \in M_x \mapsto \alpha (m) \in N_x$ is $A_x$\ti linear.
\end{enumerate}

We let $\Mod_H(A)$ be the category of $A$-modules and $(H,A)$-linear morphisms, with composition induced in the obvious way from the set-theoretical composition. 
The monoidal product of two $A$-modules $M$ and $N$ is the $A$-module $$M \otimes N=\bigoplus_{x \in H} (M \otimes N)_x \quad \text{where} \quad 
(M \otimes N)_x=\bigoplus_{\substack{y,z \in H\\ yz=x}} M_y \otimes N_z$$
is endowed with the $A_x$-action defined on $M_y \otimes N_z$ by
$$
\psfrag{X}[Bl][Bl]{\scalebox{.8}{$M_y$}}
\psfrag{Y}[Bl][Bl]{\scalebox{.8}{$N_z$}}
\psfrag{x}[Br][Br]{\scalebox{.8}{$x$}}
\psfrag{y}[Br][Br]{\scalebox{.8}{$y$}}
\psfrag{z}[Br][Br]{\scalebox{.8}{$z$}}
\rsdraw{.45}{.9}{H-A-mod-monoidal-bis} \;.
$$
The monoidal product of $(H,A)$-linear morphisms is their tensor product over $\kk$.  The unit object $\un$ is $\kk$ concentrated in degree $1 \in H$ with action given by the counit $\varepsilon \co A_1 \to \kk$.  The monoidal constraints of $\Mod_H(A)$ are inherited from the category of \kt modules. Then $\Mod_H(A)$ is a \kt linear monoidal category. Note that the forgetful functor $\Mod_H(A) \to \Mod_\kk$ is strict monoidal. 

For any $x \in H$, by viewing any $A_x$-module as an $A$-module concentrated in degree $x$, the category 
$\Mod_\kk(A_x)=\Mod_{\Mod_\kk}(A_x)$ of $A_x$-modules (see Section~\ref{sect-modules-in-cat})
is a full subcategory of $\Mod_H(A)$. Then the category $\Mod_H(A)$ is $H$-graded by the family $\{\Mod_\kk(A_x)\}_{x \in H}$:
$$
\Mod_H(A)=\bigoplus_{x \in H} \Mod_\kk(A_x)
$$
In particular, the homogenous objects of $\Mod_H(A)$ are the nonzero $A_x$-modules where $x$ runs over $H$. Note that $\Mod_H(A)$ is additive: any finite direct sum of $A$-modules always exists (it is induced in the obvious way from the direct sums in~$\Mod_\kk$). Furthermore $\Mod_H(A)$ is abelian (with kernels and cokernels induced in the obvious way from those in $\Mod_\kk$). 

Let $\mmod_H(A)$ be the full subcategory of  $\Mod_H(A)$ consisting of the $A$-modules whose underlying \kt module is projective of finite rank. Then $\mmod_H(A)$ is a \kt linear monoidal subcategory of  $\Mod_H(A)$. It is additive and $H$-graded:
$$
\mmod_H(A)=\bigoplus_{x \in H} \mmod_\kk(A_x).
$$
where $\mmod_\kk(A_x)$ is the full subcategory of  $\Mod_\kk(A_x)$  consisting of the $A_x$\ti modules whose underlying \kt module is projective of finite rank.

If $A$ is a Hopf $H$-coalgebra, then the monoidal category $\Mod_H(A)$ is closed (so that the forgetful functor $\Mod_H(A) \to \Mod_\kk$ preserves the internal Homs) and the monoidal category $\mmod_H(A)$ is rigid. The formulas for the internal Homs and the duals of objects are similar (with obvious changes) to those given in Section~\ref{sect-modules-in-cat}.

Assume that $A$ is a Hopf $H$-coalgebra. A \emph{pivotal element} for $A$ is an $H$-grouplike element $G=(G_x)_{x \in H}$ of $A$ (see Section~\ref{sect-Hopf-H-coalgebras}) such that for all $x \in H$ and $a \in A_x$,
$$
S_x S_{x^{-1}} (a)=G_x a G_x^{-1}.
$$
Then pivotal structures on $\mmod_H(A)$ are in bijective correspondence with pivotal elements of $A$. The pivotal structure on $\mmod_H(A)$ associated with a  pivotal element $G=(G_x)_{x \in H}$ is given as follows: the dual of an object $(M,r) \in \mmod_\kk(A_x)$ is
the object $(M,r)^*=(M^*,r^\dagger)\in \mmod_\kk(A_{x^{-1}})$, where $M^*=\Hom_\kk(M,\kk)$ and the action~$r^\dagger$ is given as in  Section~\ref{sect-modules-in-cat}: for all $a \in A_{x^{-1}}$,  $ \varphi \in M^\ast$, and $m \in M$, 
$$
r^\dagger(a \otimes \varphi)(m)=\varphi \bigl(S_x(a)m\bigr).
$$
The associated (co)evaluations are given, for all $\varphi \in M^\ast$, $m \in M$, by
\begin{align*}
&\lev_{(M,r)}(\varphi \otimes m)=\varphi(m) ,  && \rev_M(m \otimes \varphi)= \varphi(G_xm) , \\
& \lcoev_{(M,r)}(1_\kk)=\sum_{i} b_i \otimes b_i^*,
 && \rcoev_{(M,r)}(1_\kk)=\sum_{i} b_i^* \otimes G_x^{-1}b_i,
\end{align*}
where  $(b_i)_{i}$ is  any   basis of $M$ and  $(b_i^*)_{i}$ is the dual basis of $M^*$.

\subsection{Example}\label{ex-kk-H}
Consider the trivial Hopf $H$-coalgebra $\kk_H= \{(\kk_H)_x=\kk\}_{x \in H}$ whose structural morphisms are given for all $x,y \in H$ by
$$
\Delta_{x,y}(1_{\kk})= 1_{\kk} \otimes 1_{\kk}, \quad \varepsilon= \id_{\kk}, \quad S_x = \id_{\kk}.
$$
Then the closed $H$-graded monoidal category $\Mod_H(\kk_H)$ is nothing but the category of $H$-graded \kt modules and  \kt linear  grading-preser\-ving homomorphisms.

\subsection{Remark}\label{rem-morph-H-coalgebras}
A homomorphism from an $H$-bicoalgebra $A=\{A_x\}_{x \in H}$ to an $H$\ti bicoalgebra $B=\{B_x\}_{x \in H}$ is a family $f=\{f_x \co A_x \to B_x\}_{x\in H}$ of algebra homomorphisms compatible with the coproducts and counits of $A$ and $B$ in the following sense: for all $x,y \in H$,
$$
\Delta_{x,y}^{\! B} f_{xy}=(f_x \otimes f_y)\Delta^{\! A}_{x,y} \quad \text{and} \quad \varepsilon^B f_1=\varepsilon^A.
$$
Note that if $A$ and $B$ are Hopf $H$-coalgebras, then any $H$-bicoalgebra homomorphism $f\co A \to B$ preserves the antipodes of $A$ and $B$, that is,  $S_{x}^Bf_{x^{-1}}= f_x S_x^A$ for all $x \in H$.

It is not difficult to check that any $H$-bicoalgebra homomorphism $f\co A \to B$ induces an $H$-graded functor $f^*\co \Mod_H(B) \to \Mod_H(A)$ that is, a strong monoidal \kt linear  functor that preserves the $H$-grading of objects. Moreover, if $A$ and $B$ are Hopf $H$-coalgebras, then the functor $f^*$ is closed, that is,  it preserves the internal Homs. Also, if $A$ and $B$ are Hopf $H$-coalgebras endowed with pivotal elements, then any $H$-bicoalgebra homomorphism $f\co A \to B$ preserving the pivotal elements of $A$ and $B$ (that is, $f_x(G_x^A)=G^B_x$ for all $x \in H$) induces a pivotal $H$-graded functor $f^*\co \mmod_H(B) \to \mmod_H(A)$.

\section{Hopf crossed module-coalgebras}\label{sect-Hopf-gcrossed-modules-coalgebra}
Throughout this section, $\CM \co E \to H$ is a crossed module. We introduce the notions of $\CM$-bicoalgebras and Hopf $\CM$-coalgebras. The main motivation is that their categories of representations (introduced in Section~\ref{sect-representations-Hopf-Xi-coalgebras}) are $\CM$-graded monoidal categories.

\subsection{Crossed module-actions}\label{sect-crossed-module-actions}
A \emph{$\CM$-action} on an $H$-bicoalgebra $A=\{A_x\}_{x \in H}$ is a family
$$
\phi=\{\phi_{x,e} \co A_x \to A_{\CM(e)x} \}_{(x,e) \in H \times E}
$$
of \kt algebra homomorphisms such that for all $x,y \in H$ and $e,f \in E$,
\begin{align}
& \phi_{x,1}=\id_{A_x}, \label{eq-Xi-act-1} \\
& \phi_{\CM(e)x,f}\, \phi_{x,e}=\phi_{x,fe}, \label{eq-Xi-act-2} \\
& (\phi_{x,e} \otimes \phi_{y,f}) \Delta_{x,y}=\Delta_{\CM(e)x,\CM(f)y}\, \phi_{xy,e\lact{x}{\!f}}. \label{eq-Xi-act-3}
\end{align}
Note the indices in Axiom~\eqref{eq-Xi-act-3} are coherent since the equivariance of $\CM$ (see Section~\ref{sect-crossed-modules-def}) implies that
$\CM\bigl(e\lact{x}{\!f}\bigr)xy=\CM(e)x\CM(f)y$. Also, it follows directly from \eqref{eq-Xi-act-1} and \eqref{eq-Xi-act-2}  that each $\phi_{x,e}$ is an isomorphism and
$$
\phi_{x,e}^{-1}=\phi_{\CM(e)x, e^{-1}}.
$$
A $\CM$-action is \emph{trivial} if $\phi_{x,e}=\id_{A_x}$ (and so $A_{\CM(e)x}=A_x$) for all $x \in H$ and $e \in E$.

We depict the $\CM$-action $\phi_{x,e} \co A_x \to A_{\CM(e)x}$ by a strand with a dot labeled with $e$ (on the left or on the right) as follows:
$$
\psfrag{x}[Bl][Bl]{\scalebox{.8}{$x$}}
\psfrag{y}[Bl][Bl]{\scalebox{.8}{$\CM(e)x$}}
\psfrag{e}[Br][Br]{\scalebox{.9}{$e$}}
\phi_{x,e}= \, \rsdraw{.45}{.9}{CM-action-bis} \quad \text{or} \quad
\psfrag{e}[Bl][Bl]{\scalebox{.9}{$e$}}
 \phi_{x,e}= \,\rsdraw{.45}{.9}{CM-action} \;.
$$
Axioms~\eqref{eq-Xi-act-1}-\eqref{eq-Xi-act-3}  are  depicted as
$$
\psfrag{x}[Bl][Bl]{\scalebox{.8}{$x$}}
\psfrag{e}[Br][Br]{\scalebox{.9}{$1$}}
\rsdraw{.45}{.9}{CM-action-ax1}= \,
\psfrag{x}[Bl][Bl]{\scalebox{.8}{$x$}}
\rsdraw{.45}{.9}{CM-action-ax2} \;, \quad \quad \qquad
\psfrag{x}[Br][Br]{\scalebox{.8}{$x$}}
\psfrag{y}[Br][Br]{\scalebox{.8}{$\CM(e)x$}}
\psfrag{z}[Br][Br]{\scalebox{.8}{$\CM(f)\CM(e)x$}}
\psfrag{u}[Bl][Bl]{\scalebox{.9}{$f$}}
\psfrag{e}[Bl][Bl]{\scalebox{.9}{$e$}}
\rsdraw{.45}{.9}{CM-action-ax3}= \;
\psfrag{x}[Bl][Bl]{\scalebox{.8}{$x$}}
\psfrag{a}[Bl][Bl]{\scalebox{.8}{$\CM(fe)x$}}
\psfrag{v}[Bl][Bl]{\scalebox{.9}{$fe$}}
\rsdraw{.45}{.9}{CM-action-ax4} \quad, \qquad \qquad
\psfrag{z}[Bl][Bl]{\scalebox{.8}{$xy$}}
\psfrag{n}[Bl][Bl]{\scalebox{.8}{$\CM\bigl(e\lact{x}{\!f}\bigr)xy$}}
\psfrag{a}[Bl][Bl]{\scalebox{.8}{$\CM(f)y$}}
\psfrag{c}[Br][Br]{\scalebox{.8}{$\CM(e)x$}}
\psfrag{x}[Br][Br]{\scalebox{.8}{$x$}}
\psfrag{y}[Bl][Bl]{\scalebox{.8}{$y$}}
\psfrag{u}[Bl][Bl]{\scalebox{.9}{$f$}}
\psfrag{e}[Br][Br]{\scalebox{.9}{$e$}}
\psfrag{v}[Br][Br]{\scalebox{.9}{$e\lact{x}{\!f}$}}
\rsdraw{.45}{.9}{CM-action-ax5} \quad \; = \; \quad \rsdraw{.45}{.9}{CM-action-ax6} \quad \quad.
$$
The fact that $\phi_{x,e}$ is an algebra homomorphism is depicted as
$$
\psfrag{x}[Bl][Bl]{\scalebox{.8}{$x$}}
\psfrag{u}[Bl][Bl]{\scalebox{.9}{$e$}}
\psfrag{z}[Bl][Bl]{\scalebox{.8}{$\CM(e)x$}}
\psfrag{c}[Br][Br]{\scalebox{.8}{$x$}}
\psfrag{a}[Br][Br]{\scalebox{.8}{$\CM(e)x$}}
\psfrag{e}[Br][Br]{\scalebox{.9}{$e$}}
\rsdraw{.45}{.9}{CM-action-ax7}\;\, = \;\,\quad \rsdraw{.45}{.9}{CM-action-ax8}\quad \qquad \text{and} \qquad \quad
\psfrag{x}[Br][Br]{\scalebox{.8}{$x$}}
\psfrag{y}[Br][Br]{\scalebox{.8}{$\CM(e)x$}}
\psfrag{z}[Bl][Bl]{\scalebox{.8}{$\CM(e)x$}}
\psfrag{e}[Bl][Bl]{\scalebox{.9}{$e$}}
\rsdraw{.45}{.9}{CM-action-ax9}\; = \;\, \rsdraw{.45}{.9}{CM-action-ax10} \quad \;.
$$

\subsection{Hopf crossed module-coalgebras}\label{sect-Hopf-crossed-module-coalgebras}
A  \emph{$\CM$-bicoalgebra} (over $\kk$) is an $H$-bicoal\-gebra (over $\kk$) endowed with a $\CM$-action.

A \emph{Hopf $\CM$-coalgebra} (over $\kk$)  is a Hopf $H$-coalgebra  (over $\kk$) endowed with a $\CM$-action. Equivalently,  a Hopf $\CM$-coalgebra is a $\CM$-bicoalgebra whose underlying $H$-bicoalgebra has a bijective antipode.

In Section~\ref{sect-Hopf-Xi-as-Hopf}, we prove that Hopf crossed module-coalgebras may be seen as Hopf algebras in some symmetric monoidal category (see Corollary~\ref{cor-characterization-Hopf-Xi-coalgebras}). 
In the next lemma, we show that the antipode of a Hopf $\CM$-coalgebra is compatible with the $\CM$-action.
\begin{lem}\label{lem-Xi-antipode}
Let $A=\{A_x\}_{x \in H}$ be a Hopf $\CM$-coalgebra, with antipode $S=\{S_x\}_{x \in H}$ and $\CM$-action $\phi=\{\phi_{x,e} \}_{(x,e) \in H \times E}$. Then
$$
\phi_{x,e} \, S_x = S_{\CM(e)x} \, \phi_{x^{-1},\lact{{x^{-1}}}{\!(e^{-1})}}
$$
for all $x \in H$ and $e \in E$.
\end{lem}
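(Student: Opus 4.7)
The plan is to exploit the uniqueness of the antipode. Setting $y = \CM(e)x$, both sides of the claimed equality are $\kk$-linear maps $A_{y^{-1}} \to A_y$, so the equality is equivalent to
$$T := \phi_{x,e}\, S_x\, \phi_{y^{-1},\,\lact{x^{-1}}{e}} = S_y,$$
where I have used the formula $\phi_{x^{-1},\lact{x^{-1}}{(e^{-1})}}^{-1} = \phi_{y^{-1},\lact{x^{-1}}{e}}$ (following from $\phi_{x,e}^{-1}=\phi_{\CM(e)x,e^{-1}}$ and $\CM(\lact{x^{-1}}{e^{-1}})x^{-1}=y^{-1}$). By uniqueness of the antipode in the convolution algebra $\conv(A,A_y)$, it suffices to verify the identity
$$\mu_y(T \otimes \id_{A_y})\Delta_{y^{-1},y} = \eta_y \varepsilon.$$

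The key combinatorial input is the identity $\lact{y^{-1}}{e} = \lact{x^{-1}}{e}$ in $E$, which follows from the Peiffer identity: $\lact{\CM(e^{-1})}{e} = e^{-1}\cdot e\cdot (e^{-1})^{-1} = e$, hence $\lact{y^{-1}}{e} = \lact{x^{-1}\CM(e^{-1})}{e} = \lact{x^{-1}}{e}$. Therefore $\lact{x^{-1}}{e}\cdot \lact{y^{-1}}{(e^{-1})} = 1 \in E$.

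The computation proceeds in three steps. First, using $\id_{A_y} = \phi_{x,e}\,\phi_{y,e^{-1}}$ and the fact that $\phi_{x,e}$ is an algebra homomorphism, I can factor $\phi_{x,e}$ out:
$$\mu_y(T \otimes \id_{A_y})\Delta_{y^{-1},y} = \phi_{x,e}\,\mu_x\bigl(S_x \phi_{y^{-1},\,\lact{x^{-1}}{e}} \otimes \phi_{y,e^{-1}}\bigr)\Delta_{y^{-1},y}.$$
Second, I apply axiom \eqref{eq-Xi-act-3} to the pair $(\phi_{y^{-1},\,\lact{x^{-1}}{e}}\otimes \phi_{y,e^{-1}})\Delta_{y^{-1},y}$, which rewrites this as $\Delta_{x^{-1},x}\,\phi_{1,\,\lact{x^{-1}}{e}\cdot\lact{y^{-1}}{(e^{-1})}}$; by the key identity above and \eqref{eq-Xi-act-1} this simplifies to $\Delta_{x^{-1},x}$. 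Third, the remaining expression $\phi_{x,e}\,\mu_x(S_x\otimes \id_{A_x})\Delta_{x^{-1},x}$ equals $\phi_{x,e}\,\eta_x\,\varepsilon = \eta_y\,\varepsilon$ by the antipode axiom for $A_x$ and because algebra morphisms preserve units.

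The main obstacle is the index bookkeeping, especially the simultaneous Peiffer-type simplification of the element $\lact{x^{-1}}{e}\cdot\lact{y^{-1}}{(e^{-1})}$ to $1$. Once this identity is in hand, the proof is an essentially mechanical application of the compatibility axioms \eqref{eq-Xi-act-1}--\eqref{eq-Xi-act-3} together with the multiplicativity of $\phi_{x,e}$ and the defining property of $S_x$. (The symmetric verification of $\mu_y(\id_{A_y}\otimes T)\Delta_{y,y^{-1}} = \eta_y\varepsilon$ runs along exactly the same lines, using instead $\lact{\CM(e)}{e}=e$, and is not needed given antipode uniqueness.)
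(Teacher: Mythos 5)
Your proof is correct and follows essentially the same route as the paper's: both arguments rest on the uniqueness of the inverse of the identity in a convolution algebra, combined with Axiom \eqref{eq-Xi-act-3} and the multiplicativity of the $\phi$'s (the paper isolates $S_x$ and works in $\conv(A,A_x)$, whereas you isolate $S_{\CM(e)x}$ and work in $\conv(A,A_{\CM(e)x})$, which is just the mirror image of the same computation). Your version additionally spells out the index bookkeeping, including the Peiffer-identity simplification $\lact{x^{-1}}{e}\cdot\lact{y^{-1}}{(e^{-1})}=1$, which the paper leaves implicit.
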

\begin{proof}
Axiom \eqref{eq-Xi-act-3} and the multiplicativity of $\phi_{x,e}^{-1}=\phi_{\CM(e)x, e^{-1}}$ imply that $$\phi_{x,e}^{-1}\, S_{\CM(e)x} \, \phi_{x^{-1},\lact{{x^{-1}}}{\!(e^{-1})}} \co A_{x^{-1}} \to A_x$$ is inverse to $\id_{A_x}$ in the convolution algebra $\conv(A,A_x)$, and so is equal to $S_x$.
\end{proof}
Graphically, the compatibility of the antipode with  the $\CM$-action (see Lemma~\ref{lem-Xi-antipode})  is depicted as
$$
\psfrag{e}[Br][Br]{\scalebox{.9}{$e$}}
\psfrag{u}[Br][Br]{\scalebox{.9}{$\lact{{x^{-1}}}{\!(e^{-1})}$}}
\psfrag{z}[Bl][Bl]{\scalebox{.8}{$\CM(e)x$}}
\psfrag{x}[Bl][Bl]{\scalebox{.8}{$x$}}
\psfrag{c}[Bl][Bl]{\scalebox{.8}{$x^{-1}$}}
\psfrag{a}[Bl][Bl]{\scalebox{.8}{$x^{-1}\CM(e^{-1})$}}
\rsdraw{.45}{.9}{CM-antipx-1}\;= \;\,
\rsdraw{.45}{.9}{CM-antipx-2}\; \quad \quad  \quad \;.
$$

\subsection{Particular cases}
1. Given a group $H$, the trivial map $1 \to H$ is a crossed module and it follows from \eqref{eq-Xi-act-1} that any $(1\to H)$-action for an $H$-bicoalgebra is trivial. Consequently, the notion of a Hopf $(1\to H)$-coalgebra  agrees with that of a Hopf $H$-coalgebra.  

2. Given an abelian group $E$, the trivial map $E \to 1$ is a crossed module and the notion of a Hopf $(E\to 1)$-coalgebra  agrees with that of a Hopf algebra $A$ endowed with an action of $E$ by algebra and bicomodule automorphisms, that is, with a group homomorphism $\phi \co E \to \Aut_\kk(A)$
such that for all $e \in E$,
\begin{itemize}
\item  $\phi_e$ is an algebra automorphism of $A$,
\item   $\phi_e$ is a bicomodule automorphism of $A$:
$$
\Delta \, \phi_{e}=(\phi_e \otimes \id_A)\Delta=(\id_A \otimes \phi_e)\Delta.
$$
\end{itemize}
Here $\Aut_{\kk}(A)$ denotes the group of \kt linear automorphisms of $A$ and $\Delta$ is the coproduct of $A$.

\subsection{Example}\label{ex-Hopf-xi-from-bicharacter}
Let $\omega \co E \times G \to \kk^*$ be a bicharacter, where $E$ is an abelian group and $G$ is a group.
Recall that the group algebra $\kk[G]$ is a Hopf algebra with copro\-duct $\Delta$, counit $\varepsilon$, and antipode $S$ defined by
$\Delta(g)=g \otimes g$, $\varepsilon(g)=1_\kk$, and~$S(g)=g^{-1}$ for all $g \in G$. Consider the group homomorphism  $\phi \co E \to \Aut_{\kk}(\kk[G])$ defined by $\phi_e(g)=\omega(e,g)  g$ for all $e \in E$ and $g \in G$. Then $\kk[G]$ is a Hopf $(E \to 1)$-coalgebra with $(E \to 1)$-action $\phi$, which we denote by $\kk^\omega[G]$.

\subsection{Example}\label{ex-Hopf-xi-from-action}
Let $\CM \co E \to H$ be a crossed module, $A$ be a Hopf \kt algebra, and $\rho \co H \to \Aut_{\alg}(A)$ be a group homomorphism, where $\Aut_\alg(A)$ is the group of algebra automorphisms of $A$. Let $\delta$,~$\varepsilon$, and $s$  be the coproduct, counit, and antipode of $A$, respectively. For any $x \in H$, set $A_x=A$ as a \kt algebra. Then the family $A_\CM^\rho=\{A_x\}_{x \in H}$ is a Hopf $\CM$-coalgebra with counit  $\varepsilon \co A_1 \to \kk$ and with coproduct, antipode, and $\CM$-action respectively defined by:
\begin{gather*}
\Delta=\{\Delta_{x,y}=(\rho_x \otimes \rho_y) \delta \rho_{(xy)^{-1}} \co A_{xy} \to A_x \otimes A_y\}_{x,y \in H},\\
S=\{S_x=\rho_x s \rho_x \co A_{x^{-1}} \to A_x\}_{x \in H},\\
\phi=\bigl\{\phi_{x,e}=\rho_{\CM(e)} \co A_x \to A_{\CM(e)x}\bigr\}_{(x,e) \in H \times E},
\end{gather*}
where $\rho_z$ denotes the image of $z \in H$ under $\rho$.

\subsection{Example}
Hopf $\CM$-coalgebras with trivial $\CM$-action are in bijective correspondence with Hopf $\pi$-coalgebras, where $\pi=\Coker(\CM)=H/\Ima(\CM)$ is the so-called fundamental group of $\CM$. Indeed, let $B=\{B_g\}_{g \in \pi}$ be a Hopf $\pi$-coalgebra.
Denote by $p \co H \to \pi$ the canonical projection. For any $x \in H$, set $A_x=B_{p(x)}$ as a \kt algebra. For any $x,y \in H$, set
$\Delta_{x,y}=\delta_{p(x),p(y)}$ and $S_x=s_{p(x)}$, where $\delta$ and $s$ are the coproduct and antipode of $B$. Then the family $A=\{A_x\}_{x \in H}$, endowed with the coproduct~$\Delta$, the same counit of $B$, and the antipode $S$, is a Hopf $\CM$-coalgebra with trivial $\CM$-action.

Conversely, let $A=\{A_x\}_{x \in H}$ be a Hopf $\CM$-coalgebra with trivial $\CM$-action. Pick a set-theoretical section $q \co \pi \to H$ of the canonical projection $p \co H \to \pi$, meaning that $pq=\id_\pi$.  Note that the triviality of the $\CM$-action implies that $A_x=A_y$ for all $x, y \in H$ such that $p(x)=p(y)$.  For any $\alpha,\beta \in \pi$, set $B_\alpha=A_{q(\alpha)}$ as a \kt algebra,
$\delta_{\alpha,\beta}=\Delta_{q(\alpha),q(\beta)}$, and $ s_\alpha=S_{q(\alpha)}$, where $\Delta$ and $S$ are the coproduct and antipode of $A$. Then the family $B=\{B_\alpha\}_{\alpha \in \pi}$, endowed with the coproduct $\delta$, the same counit of $A$, and the antipode $s$, is a Hopf $\pi$-coalgebra. Note that $B$ does not depend on the choice of the section $q$ because the $\CM$-action of $A$ is trivial.

\subsection{Grouplike elements}\label{sect-Xi-grouplike}
By an \emph{$H$-grouplike element} of a Hopf $\CM$-coalgebra,  we mean an $H$-grouplike element of its underlying Hopf $H$-coalgebra (see Section~\ref{sect-Hopf-H-coalgebras}). As in Section~\ref{sect-Hopf-H-coalgebras}, we denote by $G_H(A)$ the group of $H$-grouplike elements of $A$.

A \emph{$\CM$-grouplike element} of a Hopf $\CM$-coalgebra $A$ is an $H$-grouplike element $G=(G_x)_{x\in H}$ of $A$  which is $\CM$-equivariant in the sense that for all $x \in H$ and $e \in E$,
$$
\phi_{x,e}(G_x)=G_{\CM(e)x}.
$$
The set $G_\CM(A)$ of $\CM$-grouplike elements of $A$ is a subgroup of $G_H(A)$. Note that it may be a strict subgroup. For example, for the Hopf $(E \to 1)$-coalgebra $\kk^\omega[G]$ of Example~\ref{ex-Hopf-xi-from-bicharacter}, we have:
$$
G_1\bigl(\kk^\omega[G]\bigr)=G \quad \text{and} \quad G_{E \to 1}\bigl(\kk^\omega[G]\bigr)=  \{ g \in G \,  | \, \omega(e,g)=1_\kk \text{ for all } e \in E\}.
$$

The next lemma describes the behaviour of the $\CM$-action on $H$-grouplike elements.

\begin{lem}\label{lem-Xi-bicharacter}
Let $A$ be a Hopf $\CM$-coalgebra. Then the map
$$
G_H(A) \times E \to \kk^*, \quad (G,e)  \mapsto  \langle G,e \rangle=\varepsilon\bigl(\phi_{\CM(e^{-1}),e}(G_{\CM(e^{-1})}) \bigr)
$$
is a bicharacter such that for all $G \in G_H(A)$ and $(x,e) \in H \times E$,
$$
\phi_{x,e}(G_x)=\langle G,e \rangle\, G_{\CM(e)x}.
$$
Here, $\varepsilon$ is the counit and $\phi$ is the $\CM$-action of~$A$.
\end{lem}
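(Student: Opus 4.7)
My plan is to first establish the key identity $\phi_{x,e}(G_x)=\langle G,e\rangle\, G_{\CM(e)x}$, and then deduce the bicharacter properties from it. Fix $G\in G_H(A)$ and $e\in E$, and set $f_z=\phi_{\CM(e^{-1})z,e}(G_{\CM(e^{-1})z})\in A_z$ for each $z\in H$. The strategy is to use the compatibility between $\Delta$ and $\phi$ given by axiom \eqref{eq-Xi-act-3} to pin down each $f_z$ as a scalar multiple of $G_z$.

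The heart of the argument is the following computation. Apply \eqref{eq-Xi-act-3} with $(x,e_1,y,e_2)=(\CM(e^{-1})u,\,e,\,v,\,1)$; this choice satisfies $\CM(e_1)x=u$, $\CM(e_2)y=v$, $xy=\CM(e^{-1})uv$, and $e_1\lact{x}{e_2}=e$, yielding
\[
\Delta_{u,v}\,\phi_{\CM(e^{-1})uv,\,e}=(\phi_{\CM(e^{-1})u,\,e}\otimes \id_{A_v})\,\Delta_{\CM(e^{-1})u,\,v}.
\]
Evaluating at $G_{\CM(e^{-1})uv}$ and using $\Delta_{\CM(e^{-1})u,v}(G_{\CM(e^{-1})uv})=G_{\CM(e^{-1})u}\otimes G_v$ gives $\Delta_{u,v}(f_{uv})=f_u\otimes G_v$ for all $u,v\in H$. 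Specializing to $u=1$ and applying $\varepsilon\otimes\id_{A_v}$, the counit axiom yields $f_v=\varepsilon(f_1)\,G_v$. Setting $x=\CM(e^{-1})v$ produces
\[
\phi_{x,e}(G_x)=\langle G,e\rangle\,G_{\CM(e)x},\qquad \langle G,e\rangle=\varepsilon(f_1),
\]
which matches the definition in the statement.

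Invertibility and the bicharacter properties are then formal. Applying $\phi_{x,e}^{-1}=\phi_{\CM(e)x,e^{-1}}$ (from \eqref{eq-Xi-act-1} and \eqref{eq-Xi-act-2}) to the key identity, and then applying the identity again with $(x,e)$ replaced by $(\CM(e)x,e^{-1})$, yields $G_x=\langle G,e\rangle\langle G,e^{-1}\rangle G_x$. Since $G_x$ is invertible in $A_x$, this forces $\langle G,e\rangle\in\kk^*$. Multiplicativity in $G$ follows because $\phi_{x,e}$ is an algebra homomorphism: expanding $\phi_{x,e}((GG')_x)=\phi_{x,e}(G_x)\,\phi_{x,e}(G'_x)$ via the key identity gives $\langle GG',e\rangle=\langle G,e\rangle\langle G',e\rangle$. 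Multiplicativity in $e$ follows from \eqref{eq-Xi-act-2}: expanding $\phi_{x,e_2e_1}(G_x)=\phi_{\CM(e_1)x,e_2}\phi_{x,e_1}(G_x)$ by two applications of the key identity gives $\langle G,e_2e_1\rangle=\langle G,e_2\rangle\langle G,e_1\rangle$.

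The main obstacle is selecting the right factorization of $\phi_{\CM(e^{-1})uv,\,e}$ through axiom \eqref{eq-Xi-act-3}: several decompositions of $e$ as $e_1\lact{x}{e_2}$ are available, and they yield a priori different expressions for $\Delta_{u,v}(f_{uv})$. The asymmetric choice $(e_1,e_2)=(e,1)$ is what concentrates the deviation from $G$ on the first tensor factor, so that the counit on the second factor extracts a single scalar; once this choice is made, the remainder of the proof is routine bookkeeping.
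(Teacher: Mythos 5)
Your proof is correct and follows essentially the same route as the paper's: both apply axiom \eqref{eq-Xi-act-3} with the asymmetric decomposition $e=e\cdot\lact{x}{1}$ together with counitality to show that $\phi_{x,e}(G_x)$ is the scalar $\varepsilon\bigl(\phi_{\CM(e^{-1}),e}(G_{\CM(e^{-1})})\bigr)$ times $G_{\CM(e)x}$, and then deduce the bicharacter properties formally from the multiplicativity of $\varepsilon$ and $\phi$ and from axiom \eqref{eq-Xi-act-2}. The only difference is organizational: the paper computes $\phi_{x,e}(G_x)$ directly by inserting $(\varepsilon\otimes\id)\Delta_{1,\CM(e)x}$, whereas you first record the intermediate identity $\Delta_{u,v}(f_{uv})=f_u\otimes G_v$ and then specialize $u=1$.
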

It follows directly from Lemma~\ref{lem-Xi-bicharacter} that an $H$-grouplike element $G$ of $A$  is a $\CM$-grouplike element if and only if $\langle G,e \rangle=1$ for all $e \in E$.
\begin{proof}
For all $G \in G_H(A)$ and $(x,e) \in H \times E$, we have:
\begin{align*}
\phi_{x,e}(G_x)
& \overset{(i)}{=} (\varepsilon \otimes \id_{A_{\CM(e)x}})\Delta_{1,\CM(e)x}\, \phi_{x,e}(G_x) \\
& \overset{(ii)}{=} (\varepsilon\phi_{\CM(e^{-1}),e} \otimes \phi_{\CM(e)x,1})\Delta_{\CM(e^{-1}),\CM(e)x}(G_x) \\
& \overset{(iii)}{=} \varepsilon\bigl(\phi_{\CM(e^{-1}),e}(G_{\CM(e^{-1})})\bigr) \, \phi_{\CM(e)x,1}(G_{\CM(e)x}) \\
& \overset{(iv)}{=} \langle G,e \rangle\, G_{\CM(e)x}.
\end{align*}
Here $(i)$ follows the counitality of the coproduct, $(ii)$ from~\eqref{eq-Xi-act-3}, $(iii)$ from the fact that $G$ is $H$-grouplike, and $(iv)$ from~\eqref{eq-Xi-act-1} and the definition of $\langle G,e \rangle$. The multiplicativity of the map $(G,e)  \mapsto  \langle G,e \rangle$ in the variable  $G$ follows from the multiplicativity of the counit and of the maps $\phi_{\CM(e^{-1}),e}$. Its multiplicativity in the variable  $e$ follows from \eqref{eq-Xi-act-3}. Hence this map is indeed a bicharacter $G_H(A) \times E \to \kk^*$.
\end{proof}

\subsection{The dual notion: Hopf crossed module-algebras}\label{sect-dual-notion}
The notion of a Hopf $\CM$-coalgebra is not self dual. Its dual notion is that of a Hopf $\CM$-algebra defined as follows.
Recall that a \kt algebra $A$ is \emph{$H$-graded} if it is endowed with a direct sum decomposition $A=\bigoplus_{x \in H} A_x$ such that $1_A \in A_1$ and $A_xA_y \subset A_{xy}$ for all $x,y \in H$.

An \emph{$H$-bialgebra} (over $\Bbbk$) is an $H$-graded \kt algebra $A=\bigoplus_{x \in H} A_x$
such that each $A_x$ is endowed with a structure of a \kt coalgebra 
so that the unit map $\eta \co \kk \to A$ (defined by $\eta(1_\kk)=1_A$) and the restricted products $\mu_{x,y}\co A_x \otimes A_y \to A_{xy}$ are coalgebra homomorphisms for all $x,y \in H$,  that is,
\begin{align*}
&\Delta_{xy} \,\mu_{x,y}=(\mu_{x,y}\otimes \mu_{x,y})(\id_{A_x} \otimes \sigma_{A_x ,A_y} \otimes \id_{A_y})(\Delta_x \otimes \Delta_y),
&& \varepsilon_{xy} \mu_{x,y}=\varepsilon_x \otimes \varepsilon_y, \\
& \Delta_1(1_A)=1_A \otimes 1_A,  && \varepsilon_1(1_A)=1_\kk,
\end{align*}
where $\Delta_x\co A_x \to A_x \otimes A_x$ and $\varepsilon_x \co A_x \to \kk$ denote the coproduct and counit of~$A_x$, and $\sigma_{A_x ,A_y}$ is the usual flip.

A \emph{Hopf $H$-algebra} is an $H$-bialgebra $A=\bigoplus_{x \in H} A_x$ endowed with a bijective antipode, that is, a family $S=\{S_x \co  A_x \to A_{x^{-1}} \}_{x \in H}$ of \kt linear isomorphisms such that for all $x \in H$,
$$
\mu_{x^{-1},x} (S_x \otimes \id_{A_x}) \Delta_x = \varepsilon_x \, 1_A  =  \mu_{x,x^{-1}} (\id_{A_x} \otimes S_x) \Delta_x.
$$

A \emph{$\CM$-action} on a $H$-bialgebra $A=\bigoplus_{x \in H} A_x$ is a family
$$
\phi=\{\phi_{x,e} \co A_x \to A_{\CM(e)x} \}_{x \in H, e \in E}
$$
of \kt coalgebra homomorphisms such that for all $x,y \in H$ and $e,f \in E$,
\begin{itemize}
\item $\phi_{x,1}=\id_{A_x}$,
\item  $\phi_{\CM(e)x,f}\, \phi_{x,e}=\phi_{x,fe}$,
\item  $\mu_{\CM(e)x,\CM(f)y}\, (\phi_{x,e} \otimes \phi_{y,f}) = \phi_{xy,e\lact{x}{\!f}} \,\mu_{x,y}$.
\end{itemize}
Note that each $\phi_{x,e}$ is then an isomorphism and $\phi_{x,e}^{-1}=\phi_{\CM(e)x, e^{-1}}$.

A  \emph{$\CM$-bialgebra} is an $H$-bialgebra endowed with a $\CM$-action. A \emph{Hopf $\CM$-algebra} is a Hopf $H$-algebra endowed with a $\CM$-action.

A Hopf $\CM$-(co)algebra $A$ is of \emph{finite type} if for all $x \in H$, the \kt module $A_x$ is projective of finite rank.

The dual of a Hopf $\CM$-algebra of finite type is a Hopf $\CM$-coalgebra (of finite type). More explicitly, let $A=\bigoplus_{x \in H} A_x$ be a Hopf $\CM$-algebra of finite type. For any \kt module $M$, set $M^*=\Hom_\kk(M,\kk)$. Then the dual $A^*=\{A^*_x\}_{x \in H}$ of $A$ is a Hopf $\CM$-coalgebra. Its coproduct $\Delta_{x,y}\co A_{xy}^* \to A_x^* \otimes A_y^*$ is induced by the transpose of the restricted product $A_x \otimes A_y \to A_{xy}$ and the canonical \kt linear isomorphism
$(A_x \otimes A_y)^* \cong A_x^* \otimes A_y^*$. The algebra structure of $A_x^*$ is induced in the standard way by the coalgebra structure of $A_x$. The antipode and $\CM$-action of $A^*$ are the transpose of those of $A$.

Conversely, the dual $A^*=\bigoplus_{x \in H} A_x^*$ of a Hopf $\CM$-coalgebra $A=\{A_x\}_{x \in H}$ of finite type is a Hopf $\CM$-algebra (with transposed structural morphisms).

\section{Categories of representations of Hopf crossed module-coalgebras}\label{sect-representations-Hopf-Xi-coalgebras}
Throughout this section, $\CM \co E \to H$ is a crossed module and $A=\{A_x\}_{x \in H}$  is a  $\CM$-bicoalgebra (over $\kk$).
We associate to  $A$ two categories of representations $\Mod_\CM(A)$ and $\mmod_\CM(A)$ which are $\CM$-graded categories (in the sense of Section~\ref{sect-monoidal-graded-crossed-modules}).

\subsection{Modules over $\CM$-bicoalgebras}\label{sect-modules-Xi-coalgebras-def}
A \emph{(left) $A$-module} is a module over the $H$\ti bicoalgebra underlying~$A$, that is, an $H$-graded \kt module $$M=\bigoplus_{x \in H} M_x$$ such that each $M_x$ is endowed with a structure of an $A_x$\ti module (see Section~\ref{sect-modules-coalgebras}).

A \emph{$(\CM,A)$-linear morphism} between two $A$-modules  $M$ and $N$ is a \kt linear  homo\-mor\-phism $\alpha \co M \to N$ such that:
\begin{enumerate}
\labela
\item  For all $x\in H$,
$$\alpha(M_x) \subset \bigoplus_{e \in E} N_{\CM(e)x}.$$
\item For all $x \in H$ and $e \in E$, the \kt linear homomorphism $\alpha_{x,e} \co M_x \to N_{\CM(e)x}$ induced by $\alpha$ (restriction to $M_x$ followed by projection to $N_{\CM(e)x}$) is an $A_x$-linear morphism 
    $$
    \alpha_{x,e} \co M_x \to \phi_{x,e}^*(N_{\CM(e)x}),
    $$
    where  the $A_x$-module $\phi_{x,e}^*(N_{\CM(e)x})$ is the pullback of the $A_{\CM(e)x}$-mo\-du\-le $N_{\CM(e)x}$ along the algebra isomorphism $\phi_{x,e}\co A_x \to A_{\CM(e)x}$ given by the $\CM$\ti action of $A$. Graphically, the $A_x$-linearity of $\alpha_{x,e}$ depicts as:
    \begin{center}
\psfrag{x}[Br][Br]{\scalebox{.8}{$x$}}
\psfrag{y}[Br][Br]{\scalebox{.8}{$\CM(e)x$}}
\psfrag{e}[Br][Br]{\scalebox{.9}{$e$}}
\psfrag{M}[Bl][Bl]{\scalebox{.8}{$M_x$}}
\psfrag{N}[Bl][Bl]{\scalebox{.8}{$N_{\CM(e)x}$}}
\psfrag{f}[Bc][Bc]{\scalebox{.9}{$\alpha_{x,e}$}}
$\rsdraw{.45}{.9}{morphAx-axiom2-bis}\quad=\quad\rsdraw{.45}{.9}{morphAx-axiom-bis}$\qquad .
\end{center}
\end{enumerate}
\medskip

\subsection{The categories of representations}\label{sect-modules-Xi-coalgebras}
We let $\Mod_\CM(A)$ be the category of $A$-modules and $(\CM,A)$-linear morphisms, with composition induced in the obvious way from the set-theoretical composition. The category $\Mod_\CM(A)$ is \kt linear (as a subcategory of $\Mod_\kk$). For any  two $A$-modules $M$ and $N$, we endow the \kt module $\Hom_{\Mod_\CM(A)}(M,N)$ with a structure of an $E$-graded \kt module by setting:
$$
\Hom_{\Mod_\CM(A)}(M,N)= \bigoplus_{e \in E} \Hom^e_{\Mod_\CM(A)}(M,N)
$$
where $\Hom^e_{\Mod_\CM(A)}(M,N)$ is the set of $(\CM,A)$-linear morphisms $\alpha \co M \to N$ such that for all $x \in E$, 
$$\alpha(M_x) \subset N_{\CM(e)x}.$$ 
The axioms \eqref{eq-Xi-act-1} and \eqref{eq-Xi-act-2} of a $\CM$\ti action imply that $\Mod_\CM(A)$ is $E$-Hom-graded. Also, since the $(\CM,A)$-linear morphisms of degree~1 are nothing but the $(H,A)$-linear morphisms, the 1-subcategory of $\Mod_\CM(A)$ is
$$
\Mod_\CM(A)^1=\Mod_H(A),
$$
where $\Mod_H(A)$ is the category of modules over the $H$-bicoal\-ge\-bra underlying~$A$ (see Section~\ref{sect-modules-coalgebras}). Note that for all $A$-modules $M,N$ and $e \in E$,
$$
\Hom^e_{\Mod_\CM(A)}(M,N)=\Hom_{\Mod_H(A)}\bigl(M,\phi^*_e(N)\bigr),
$$
where $\phi^*_e(N)$ is the $A$-module defined by 
$$
\phi^*_e(N)= \bigoplus_{x\in H} \phi^*_e(N)_x \quad \text{with} \quad \phi^*_e(N)_x=\phi_{x,e}^*(N_{\CM(e)x}) \in \Mod_\kk(A_x).
$$
The category $\Mod_\CM(A)$ is monoidal: the monoidal product of two $A$-modules,  the unit object,  and the monoidal constraints  of  $\Mod_\CM(A)$ are those of $\Mod_H(A)$, and the monoidal product of two $(\CM,A)$-linear morphisms is their tensor product over~$\kk$.

\begin{thm}\label{thm-mod-Xi-A}
The category $\Mod_\CM(A)$  is a $\CM$-graded monoidal category which is $E$-additive (see Section~\ref{sect-Hom-graded-categories}) and abelian. Moreover, if $A$ is a Hopf $\CM$-coalgebra, then $\Mod_\CM(A)$ is closed. 
\end{thm}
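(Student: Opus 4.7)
The plan is to verify the five axioms of a $\CM$-graded monoidal category from Section~\ref{sect-monoidal-graded-crossed-modules} one by one, then to establish $E$-additivity and abelianness by reducing to the category $\Mod_H(A)$ of representations of the underlying $H$-bicoalgebra via the pullback functors $\phi_e^*$, and finally to deduce the closed structure by invoking the criterion that a $\CM$-category is closed if and only if its $1$-subcategory is.

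Axiom~(a) is precisely the $E$-Hom-grading constructed in Section~\ref{sect-modules-Xi-coalgebras}; its compatibility with composition and identities uses Axioms~\eqref{eq-Xi-act-1}--\eqref{eq-Xi-act-2} of a $\CM$-action. Axiom~(c) is immediate since $\Mod_\CM(A)^1 = \Mod_H(A)$, which is $H$-graded by Section~\ref{sect-modules-coalgebras}. Axiom~(b) follows because the associativity and unit constraints are those of $\Mod_\kk$, preserve all gradings, and so lie in $\Mod_H(A)$ (hence have degree~$1$). Axiom~(d) is direct: if $X \in \Mod_\kk(A_x)$ and $Y \in \Mod_\kk(A_y)$ are homogeneous with $y \neq \CM(e)x$, then any $\alpha \in \Hom_{\Mod_\CM(A)}^e(X,Y)$ must send $X = X_x$ into $Y_{\CM(e)x} = 0$. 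The main computation is axiom~(e): for $\alpha \in \Hom^e(X,Y)$ with $X$ homogeneous of degree $x$ and $\beta \in \Hom^f(Z,T)$ together with a homogeneous component $Z_u$, the chain of inclusions
$$
(\alpha \otimes \beta)(X_x \otimes Z_u) \subset Y_{\CM(e)x} \otimes T_{\CM(f)u} \subset (Y \otimes T)_{\CM(e)x\CM(f)u}
$$
combined with the equivariance identity $\CM(e)x\CM(f)u = \CM(e\lact{x}{\!f})xu$ yields the asserted degree $e\lact{|X|}{|\beta|}$. The $A_{xu}$-linearity into the corresponding pullback $\phi^*_{xu,\,e\lact{x}{\!f}}\bigl((Y\otimes T)_{\CM(e\lact{x}{\!f})xu}\bigr)$ then follows from the $A_x$-linearity of $\alpha_{x,e}$, the $A_u$-linearity of $\beta_{u,f}$, and the compatibility~\eqref{eq-Xi-act-3} between $\phi$ and $\Delta$. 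I expect this last verification to be the main obstacle, as it is precisely where the $\CM$-bicoalgebra structure is most intricately used.

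For $E$-additivity, the key observation is that for each $e \in E$ the identity map on underlying \kt modules defines a canonical $(\CM,A)$-linear isomorphism $\iota_e \co X \xrightarrow{\sim} \phi_{e^{-1}}^*(X)$ of degree~$e$, whose $A_x$-linearity reduces via~\eqref{eq-Xi-act-1}--\eqref{eq-Xi-act-2} to $\phi_{\CM(e)x, e^{-1}} \phi_{x,e} = \id_{A_x}$. Given a finite family $(X_a)_a$ of $A$-modules, the $e$-direct sum can then be taken to be $\phi_{e^{-1}}^*\bigl(\bigoplus_a X_a\bigr)$, with inclusions $\iota_e \circ j_a$ and projections $\pi_a \circ \iota_e^{-1}$ derived from the standard degree-$1$ inclusions $j_a$ and projections $\pi_a$ of the ordinary direct sum in $\Mod_H(A)$. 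Abelianness is established by noting that every $(\CM, A)$-linear morphism $\alpha$ between $A$-modules has its kernel and cokernel (computed in $\Mod_\kk$) inheriting an $A$-module and $H$-graded structure via the pullback description of the homogeneous components of $\alpha$; the axioms of an abelian category are then checked degree-by-degree using the abelianness of $\Mod_H(A)$.

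Finally, when $A$ is a Hopf $\CM$-coalgebra, the underlying $H$-bicoalgebra is a Hopf $H$-coalgebra, so $\Mod_H(A) = \Mod_\CM(A)^1$ is closed by Section~\ref{sect-modules-coalgebras}. Invoking the criterion from Section~\ref{sect-monoidal-graded-crossed-modules} that a $\CM$-category is closed iff its $1$-subcategory is, we conclude that $\Mod_\CM(A)$ is closed. The internal Homs can be described explicitly as those of $\Mod_H(A)$, with (co)evaluations inherited and hence of degree~$1$, and the $E$-graded adjunction $\Hom^e_{\Mod_\CM(A)}(K \otimes M, N) \cong \Hom^e_{\Mod_\CM(A)}(K, \lhom{M, N})$ follows by precomposing with $\iota_e$ and using the ordinary adjunction of the closed structure on $\Mod_H(A)$.
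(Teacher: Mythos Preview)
Your proposal is correct and follows essentially the same approach as the paper: both verify the $\CM$-graded monoidal axioms using~\eqref{eq-Xi-act-3} for the monoidal product of morphisms, construct $e$-direct sums via the pullbacks $\phi_{e^{-1}}^*$, reduce abelianness to $\Mod_\kk$, and deduce closedness from the 1-subcategory criterion. Your write-up is simply more explicit about axioms~(d) and~(e) and about the canonical degree-$e$ isomorphism $\iota_e$, whereas the paper compresses these steps into a few lines.
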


\begin{proof}
By the above, the category $\Mod_\CM(A)$ is $E$-Hom-graded and its 1-subcategory is $\Mod_\CM(A)^1=\Mod_H(A)$. In particular, $\Mod_\CM(A)^1$ is  $H$-graded (see Section~\ref{sect-modules-coalgebras}). The fact that the tensor product over $\kk$ of two $(\CM,A)$-linear morphisms is a $(\CM,A)$-linear morphism follows from \eqref{eq-Xi-act-3}. This implies that the above definitions do define a monoidal structure on $\Mod_\CM(A)$.

To prove that $\Mod_\CM(A)$ is $E$-additive, first remark that finite 1-direct sums of $A$-modules exist in $\Mod_\CM(A)$ (since $\Mod_H(A)$ is additive by Section~\ref{sect-modules-coalgebras}). Then, for any $e\in E$, the
$e$-direct sum of a finite family   $(M_\lambda)_{\lambda \in \Lambda}$ of $A$-modules exists in $\Mod_\CM(A)$ and is computed by
$$
\bigoplus^e_{\lambda\in \Lambda} M_\lambda= \bigoplus^1_{\lambda\in \Lambda} \phi^*_{e^{-1}}(M_\lambda).
$$

The fact that $\Mod_\CM(A)$ is  abelian is proved as in the classical case by noticing that the kernels and cokernels in $\Mod_\CM(A)$ are induced in the obvious way from those in $\Mod_\kk$. 

Finally, if $A$ is a Hopf $\CM$-coalgebra, then $\Mod_\CM(A)$ is closed since its 1-subcategory $\Mod_\CM(A)^1=\Mod_H(A)$ is closed (see Section~\ref{sect-modules-coalgebras}).
\end{proof}

Let $\mmod_\CM(A)$ be the full subcategory of  $\Mod_\CM(A)$ consisting of the $A$-modules whose underlying \kt module is projective of finite rank. By a \emph{pivotal element} of a  Hopf $\CM$-coalgebra, we mean a pivotal element of its underlying Hopf $H$-coalgebra (see Section~\ref{sect-modules-coalgebras}).

\begin{cor}\label{cor-mmod}
The category $\mmod_\CM(A)$ is an $E$-additive $\CM$-graded monoidal category.  Moreover, if $A$ is a Hopf $\CM$-coalgebra, then $\mmod_\CM(A)$ is rigid and the pivotal structures on $\mmod_\CM(A)$ are in bijective correspondence with the pivotal elements of~$A$.
\end{cor}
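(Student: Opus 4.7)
The plan is to reduce the corollary to two ingredients stated earlier in the excerpt: (i) the structural properties of $\Mod_\CM(A)$ established in Theorem \ref{thm-mod-Xi-A}, and (ii) the analogous statements for $\mmod_H(A)$ summarized in Section \ref{sect-modules-coalgebras}, combined with the general principle from Section \ref{sect-monoidal-graded-crossed-modules} that $\CM$-categorical properties (rigidity, closedness, pivotality) of a $\CM$-category $\cc$ are equivalent to the corresponding properties of its $1$-subcategory $\cc^1$.

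First I would verify that $\mmod_\CM(A)$ inherits a $\CM$-graded monoidal structure from $\Mod_\CM(A)$. This amounts to checking that $\mmod_\CM(A)$ is closed under the monoidal product (true because tensor products over $\kk$ of projective \kt modules of finite rank remain projective of finite rank), contains the unit $\un$ (trivially), and that the $E$-Hom-grading restricts. The $1$-subcategory is then $\mmod_\CM(A)^1 = \mmod_H(A)$, which is $H$-graded as in Section \ref{sect-modules-coalgebras}. For $E$-additivity, I would use the explicit formula $\bigoplus^e_{\lambda\in \Lambda} M_\lambda = \bigoplus^1_{\lambda\in \Lambda} \phi^*_{e^{-1}}(M_\lambda)$ appearing in the proof of Theorem \ref{thm-mod-Xi-A}: since $\phi^*_{e^{-1}}$ preserves the underlying \kt module and finite $1$-direct sums of projective \kt modules of finite rank remain projective of finite rank, $e$-direct sums exist inside $\mmod_\CM(A)$.

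Next, assuming $A$ is a Hopf $\CM$-coalgebra, I would invoke the final remark of Section \ref{sect-monoidal-graded-crossed-modules} that a $\CM$-category is rigid if and only if its $1$-subcategory is rigid. Since $\mmod_\CM(A)^1 = \mmod_H(A)$ is rigid by Section \ref{sect-modules-coalgebras}, rigidity of $\mmod_\CM(A)$ follows at once, with left and right duals given by the formulas recalled there.

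For the bijection between pivotal structures and pivotal elements, I would again apply the characterization from Section \ref{sect-monoidal-graded-crossed-modules}: a $\CM$-category is pivotal iff its $1$-subcategory is pivotal. Combined with the bijection recalled in Section \ref{sect-modules-coalgebras} between pivotal structures on $\mmod_H(A)$ and pivotal elements of the underlying Hopf $H$-coalgebra, and the definition in Section \ref{sect-Xi-grouplike}--\ref{sect-dual-notion} that a pivotal element of $A$ is by definition a pivotal element of the underlying Hopf $H$-coalgebra, this yields the desired bijective correspondence. The main obstacle---really the only nontrivial point---is to be confident in the equivalence ``$\cc$ pivotal $\Leftrightarrow \cc^1$ pivotal''; any pivotal structure on $\mmod_H(A)$ must be shown to extend uniquely to a pivotal structure on the full $\mmod_\CM(A)$ with $|X^*|=|X|^{-1}$ and degree-$1$ (co)evaluations, which I would handle by extending the duality object-wise along the $1$-subcategory and checking that the resulting (co)evaluations retain degree $1$ because they do so on the homogeneous generators. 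Everything else is a formal consequence.
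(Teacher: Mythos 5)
Your proposal is correct and follows essentially the same route as the paper: restrict the monoidal structure of $\Mod_\CM(A)$ to $\mmod_\CM(A)$, then reduce rigidity and the classification of pivotal structures to the $1$-subcategory $\mmod_\CM(A)^1=\mmod_H(A)$ via the equivalences of Section~\ref{sect-monoidal-graded-crossed-modules} and the results of Section~\ref{sect-modules-coalgebras}. The only difference is that you spell out the $E$-additivity argument (via $\bigoplus^e_{\lambda} M_\lambda = \bigoplus^1_{\lambda} \phi^*_{e^{-1}}(M_\lambda)$) and the extension of pivotal structures from $\cc^1$ in more detail than the paper, which simply cites these facts.
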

Note as above that the 1-subcategory of $\mmod_\CM(A)$ is
$$\mmod_\CM(A)^1=\mmod_H(A),$$
where $\mmod_H(A)$ is the $H$-graded monoidal category associated with the Hopf $H$\ti coalgebra underlying $A$ (see Section~\ref{sect-modules-coalgebras}).
\begin{proof}
The first assertion follows from the fact that the monoidal structure of  $\Mod_\CM(A)$ restricts to $\mmod_\CM(A)$.
Assume $A$ is a Hopf $\CM$-coalgebra. Then $\mmod_\CM(A)$ is rigid since its 1-subcategory $\mmod_\CM(A)^1=\mmod_H(A)$ is rigid (see Section~\ref{sect-modules-coalgebras}). Also, the pivotal structures on $\mmod_\CM(A)$ are in bijective correspondence with the pivotal structures on $\mmod_\CM(A)^1$, and so in bijective correspondence with the pivotal elements of $A$ (see Section~\ref{sect-modules-coalgebras}).
\end{proof}

\subsection{Remark}\label{rem-morph-Xi-coalgebras}
A homomorphism from a $\CM$-bicoalgebra $A$ to a $\CM$-bicoalgebra $B$ is an $H$-bicoalgebra homomorphism $f \co A \to B$ (see Remark~\ref{rem-morph-H-coalgebras})  which is compatible with the $\CM$-actions of $A$ and $B$ in the following sense: for all $(x,e) \in H \times E$,
$$
\phi^B_{x,e}f_x=f_{\CM(e)x} \phi^A_{x,e}.
$$
It not difficult to check that such a homomorphism $f\co A \to B$ induces $\CM$-graded functors $f^*\co \Mod_\CM(B) \to \Mod_\CM(A)$ and $f^*\co \mmod_\CM(B) \to \mmod_\CM(A)$. Here, by a  $\CM$\ti graded functor between $\CM$-graded monoidal categories, we mean a strong monoidal \kt linear functor that preserves the $H$-grading of objects and the $E$-grading of morphisms.

\subsection{Example}\label{ex-kk-CM}
Consider the trivial Hopf $\CM$-coalgebra $\kk_\CM= \{(\kk_\CM)_x=\kk\}_{x \in H}$ whose structural morphisms are given for all $x,y \in H$ and $e \in E$ by
$$
\Delta_{x,y}(1_{\kk})= 1_{\kk} \otimes 1_{\kk}, \quad \varepsilon= \id_{\kk}, \quad S_x = \id_{\kk}, \quad \phi_{x,e}=\id_{\kk}.
$$
By Theorem~\ref{thm-mod-Xi-A}, the category $\Mod_\CM(\kk_\CM)$ is an $E$-additive closed $\CM$-graded monoidal category. Its objects are the $H$-graded \kt modules $M=\bigoplus_{x \in H} M_x$ and its Hom-sets  are computed by
$$
\Hom^e_{\Mod_\CM(\kk_\CM)}(M,N)= \bigoplus_{x \in H} \Hom_\kk(M_x,N_{\CM(e)x}).
$$ 
The composition is induced from the set-theoretical composition. The monoidal product of objects is that of $H$-graded modules, the monoidal product of morphisms is their tensor product over~$\kk$, and the monoidal constraints are inherited from the category of \kt modules. Note that the 1-subcategory of $\Mod_\CM(\kk_\CM)$ is the category of $H$-graded \kt modules and  \kt linear  grading-preser\-ving homomorphisms (see Example~\ref{ex-kk-H}). Also, the $\CM$-category $\kk\tg_\CM$ from Example~\ref{ex-kk-GXi} is isomorphic to the
full subcategory of $\Mod_\CM(\kk_\CM)$ whose set of objects is $\{\kk_x\}_{x \in H}$, where $\kk_x$ is $\kk$ concentrated in degree $x$.

\subsection{Example}
Consider the Hopf $(E \to 1)$-coalgebra $\kk^\omega[G]$ from Example~\ref{ex-Hopf-xi-from-bicharacter}, where $E$ is an abelian group, $G$ is a group, and $\omega \co E \times G \to \kk^*$ is a bicharacter. By Theorem~\ref{thm-mod-Xi-A}, the category $\cc_\omega=\Mod_{(E \to 1)}(\kk^\omega[G])$ is an $E$-additive closed $(E \to 1)$-graded monoidal category. Its objects are the $\kk$-linear representations of $G$. For any representations $M,N$ of $G$ and any $e \in E$, the $\kk$-module $\Hom^e_{\cc_\omega}(M,N)$ is the set of \kt linear maps $\alpha \co M \to N$ such that for all $g \in G$ and $m \in M$,
$$
\alpha(g \cdot m)=\omega(e,g) \, \bigl(g \cdot \alpha(m)\bigr).
$$
The composition in $\cc_\omega$ is induced from the set-theoretical composition and the monoidal product in $\cc_\omega$ is the usual tensor product of representations of $G$. Note that these 
are $E$-multiplicative as expected (see Section~\ref{sect-part-cases}).

\subsection{Example}
Consider the Hopf $\CM$-coalgebra $A_\CM^\rho$ from Example~\ref{ex-Hopf-xi-from-action}, where $A$ is a Hopf algebra and  $\rho \co H \to \Aut_{\alg}(A)$ is a group homomorphism.
By Theorem~\ref{thm-mod-Xi-A}, the category $\Mod_\CM(A_\CM^\rho)$ is an $E$-additive closed $\CM$-graded monoidal category. Its objects are the $H$-graded $A$-modules, that is, the $A$-modules $M$ endowed with an $H$-grading $M=\bigoplus_{x \in H} M_x$ so that $a \cdot M_x \subset M_x$ for all $a \in A$ and $x \in H$.
The Hom-sets  are given by
$$
\Hom^e_{\Mod_\CM(\kk_\CM)}(M,N)= \bigoplus_{x \in H} \Hom_{\Mod_\kk(A)}\bigl(M_x,\rho_{\CM(e)}^*(N_{\CM(e)x})\bigr).
$$
The composition is induced from the set-theoretical composition. The monoidal product is induced from the usual monoidal product of $A$-modules (using the coproduct of $A$) and of $H$-graded \kt modules.
 Note that when $A=\kk$ we recover Example~\ref{ex-kk-CM} (since $\rho$ becomes trivial and  $\kk_\CM^\rho=\kk_\CM$ as Hopf $\CM$-coalgebras).

\subsection{$\CM$-fusion categories from Hopf $\CM$-coalgebras}\label{sect-semisimple}
We first recall the definition of a $\CM$-fusion category from \cite{SV}. By a  \emph{simple} object of a $\kk$-linear category, we mean an object whose set of endomorphisms is a free \kt module of rank 1. 
Note that if $\cc$ is a $\CM$-category over~$\kk$, then a simple object of  the 1-subcategory $\cc^1$ of~$\cc$ is nothing but an object $i$ of $\cc$ such that $\End_\cc^1(i)=\kk \, \id_i$.

A \emph{$\CM$-fusion category (over $\kk$)} is a rigid $\CM$-category $\cc$ (over $\kk$) such that:
\begin{enumerate}
\labela
\item The 1-subcategory $\cc^1$ of $\cc$ is \emph{$H$-fusion}, that is:
\begin{itemize} 
\item $\Hom_{\cc^1}(i,j)=0$ whenever $i,j$ are non-isomorphic simple objects of $\cc^1$,
\item each object of $\cc^1$ is a (finite) direct sum of simple objects of~$\cc^1$,
\item the unit object $\un$ is a simple object of $\cc^1$,
\item for any $h \in H$, there are at least one and only finitely many (up to isomorphism in $\cc^1$) homogeneous simple objects of $\cc^1$ with degree $h$.
\end{itemize}
\item For any $e \in E$, each object of $\cc$ is an $e$-direct sum of a finite family of simple objects of $\cc^1$.
\end{enumerate} 
For example, the $\CM$-category $\kk\tg_\CM$ from Example~\ref{ex-kk-GXi} is $\CM$-fusion. 

Clearly, the Hom-sets in a $\CM$-fusion category are free $\kk$-modules of finite rank.
Note that a $\CM$-fusion category $\cc$ may not be semisimple (as a \kt linear category) while its $1$-subcategory $\cc^1$ always is. Indeed, a simple object $i$ of $\cc^1$ is not necessarily simple in $\cc$: it may happen that $\End^e_\cc(i)\neq 0$ for some $e \in E \setminus\{1\}$ (as for example in $\kk\tg_\CM$ when $\Ker(\CM)$ is nontrivial).

\begin{thm}\label{thm-Xi-fusion}
Let $A=\{A_x\}_{x \in H}$ be a Hopf $\CM$-coalgebra over an algebraically closed field $\kk$ such that the \kt algebra $A_1$ is semisimple and  for all $x \in H$, the \kt algebra $A_x$ is nonzero and finite dimensional. Then the category $\mmod_\CM(A)$ from Corollary~\ref{cor-mmod} is $\CM$-fusion.
\end{thm}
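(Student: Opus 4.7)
The plan is to verify the two axioms defining a $\CM$-fusion category --- that the 1-subcategory $\mmod_\CM(A)^1 = \mmod_H(A)$ is $H$-fusion, and that every object decomposes as an $e$-direct sum of simple objects of $\mmod_H(A)$ for every $e \in E$ --- using the already-established structure of $\mmod_\CM(A)$ as a rigid $\CM$-category from Corollary~\ref{cor-mmod}.

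The crux of the argument, and the main obstacle, is showing that each algebra $A_x$ is semisimple over $\kk$, not just $A_1$. This propagation does not follow from formal $\CM$-category properties alone; rather, it relies on the Maschke-type theorem for Hopf group-coalgebras from \cite{Vi2}, which asserts that a Hopf $H$-coalgebra of finite type is semisimple in the sense that each $A_x$ is a semisimple algebra if and only if $A_1$ is semisimple. Granted this, since $\kk$ is algebraically closed and each $A_x$ is finite-dimensional, Artin--Wedderburn decomposes $A_x \cong \prod_{i \in I_x} \End_\kk(V_{x,i})$ with $I_x$ finite and nonempty (the latter because $A_x \neq 0$).

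From this input, the $H$-fusion axioms for $\mmod_H(A) = \bigoplus_{x \in H} \mmod_\kk(A_x)$ drop out quickly: Hom-orthogonality between non-isomorphic simples is immediate across distinct $H$-degrees (by the grading) and follows from semisimplicity of $A_x$ within a fixed degree $x$; every finite-dimensional $A_x$-module is a direct sum of simples, so every object of $\mmod_H(A)$ is a finite direct sum of simple objects; the unit $\un = \kk$ with $A_1$ acting via $\varepsilon$ is one-dimensional and hence a simple object of $\mmod_H(A)$; and the set of simples of degree $h \in H$ is $\{V_{h,i}\}_{i \in I_h}$, nonempty and finite.

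For the second axiom, given $e \in E$ and $M \in \mmod_\CM(A)$, my strategy is to first write $M = \bigoplus^1_{\lambda \in \Lambda} L_\lambda$ as a finite 1-direct sum of simples of $\mmod_H(A)$ (using the $H$-fusion property just verified), and then apply the identity $\bigoplus^e_\lambda N_\lambda = \bigoplus^1_\lambda \phi^*_{e^{-1}}(N_\lambda)$ recorded in the proof of Theorem~\ref{thm-mod-Xi-A}. Combined with the fact that $\phi^*_e$ is an auto-equivalence of $\mmod_H(A)$ sending simples to simples --- being pullback along the algebra isomorphism $\phi_{x,e}$, with inverse $\phi^*_{e^{-1}}$ by Axioms~\eqref{eq-Xi-act-1}--\eqref{eq-Xi-act-2} --- this yields $M = \bigoplus^e_{\lambda \in \Lambda} \phi^*_e(L_\lambda)$, the desired finite $e$-direct sum of simples of $\mmod_H(A)$, completing the verification.
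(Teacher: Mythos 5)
Your proposal is correct and follows essentially the same route as the paper: both deduce semisimplicity of every $A_x$ from that of $A_1$ via the Maschke-type result of \cite{Vi2}, identify the irreducible $A_x$-modules with the simple objects (using that $\kk$ is algebraically closed and $A_x$ is nonzero finite dimensional) to get that $\mmod_H(A)$ is $H$-fusion, and then obtain Axiom (b) by pulling back a $1$-direct sum decomposition into simples along the $\CM$-action. Your $\phi^*_e(L_\lambda)$ is exactly the paper's $(\phi_{\CM(e^{-1})|i_\lambda|,e})^*(i_\lambda)$, so the two arguments coincide.
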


\begin{proof}
By \cite[Lemma 5.1]{Vi2} applied to the Hopf $H$-coalgebra underlying $A$, we obtain that for all $x \in H$, the \kt algebra $A_x$ is semisimple, and so the category of finite dimensional  $A_x$-modules is semisimple.  Note that all the irreducible $A_x$-modules are simple objects in the above sense because $\kk$ is an algebraically closed field, and there are at least one and only finitely many of them (up to isomorphism) since $A_x$ is nonzero and finite dimensional. Also the unit object (which is $\kk$ with $A_1$-action given by the counit) is an irreducible $A_x$-module. Then $\mmod_\CM(A)^1=\mmod_H(A)$ is $H$-fusion. 

It remains to verify Axiom $(b)$. Let $e \in E$ and $X$ be an object of $\mmod_\CM(A)$. Since $\mmod_\CM(A)^1$ is $H$-fusion, the object $X$ is a $1$-direct sum of a finite family $(i_\lambda)_{\lambda \in \Lambda}$ of homogeneous simple objects of $\mmod_\CM(A)^1$. It follows from the definition of $\mmod_\CM(A)$ that if $i$ is a homogeneous simple object of $\mmod_\CM(A)^1$, then $(\phi_{\CM(e^{-1})|i|,e})^*(i)$ is a homogeneous simple object of $\mmod_\CM(A)^1$ which is $e$-isomorphic to $i$.
Consequently, the object $X$ is the $e$-direct sum of the finite family $\bigl((\phi_{\CM(e^{-1})|i_\lambda|,e})^*(i_\lambda)\bigr)_{\lambda \in \Lambda}$ of homo\-ge\-neous simple objects of $\mmod_\CM(A)^1$.
\end{proof}

\subsection{Representations of Hopf $\CM$-algebras}\label{sect-dual-A-comodules}
Recall from Section~\ref{sect-dual-notion} the notions of a $\CM$-bialgebra and of a Hopf $\CM$-algebra.
Any $\CM$-bialgebra $A=\bigoplus_{x \in H} A_x$ (over~$\kk$) yields an $E$-additive abelian $\CM$-graded monoidal category $\Comod_\CM(A)$. The construction of~$\Comod_\CM(A)$ is dual to that given in Section~\ref{sect-modules-Xi-coalgebras} and we only give here a brief description of it.

The objects of  $\Comod_\CM(A)$ are the (left) $A$-comodules, that is, the $H$\ti graded \kt module $M=\bigoplus_{x \in H} M_x$ such that each $M_x$ is endowed with a structure of a (left) $A_x$\ti comodule. An $A$-comodule $M$ is homogeneous of degree $x \in H$ if it is nonzero and concentrated in degree $x$ (that is, $M=M_x$). The monoidal product of $A$-comodules is induced by the usual tensor product of $H$-graded \kt modules and the product $\mu=\{\mu_{x,y} \co A_x \otimes A_y \to A_{xy}\}_{x,y \in H}$ of $A$. 

For all $M,N\in \Comod_\CM(A)$ and $e \in E$, the \kt module $\Hom^e_{\Comod_\CM(A)}(M,N)$ is the set of  \kt linear  homo\-mor\-phisms $\alpha \co M \to N$ such that for all $x\in H$,
\begin{enumerate}
\labela
\item  $\alpha(M_x) \subset N_{\CM(e)x}$,
\item the map $(\phi_{x,e})_*(M_x) \to N_{\CM(e)x}$  induced by $\alpha$ is $A_{\CM(e)x}$-colinear, where $\phi$ is the $\CM$-action of $A$.
\end{enumerate}
The composition is induced in the obvious way from the set-theoretical composition. The monoidal product of morphisms is their tensor product over $\kk$. The  monoidal constraints  of  $\Comod_\CM(A)$ are induced (in the obvious way) from those of $\Mod_\kk$.

Similarly, the full subcategory $\comod_\CM(A)$ of  $\Comod_\CM(A)$ consisting of the $A$\ti co\-mo\-du\-les whose underlying \kt module is projective of finite rank is an $E$-additive $\CM$-graded monoidal category. 

Dually to Theorem~\ref{thm-mod-Xi-A} and Corollary~\ref{cor-mmod}, if $A$ is a Hopf $\CM$-algebra, then the $\CM$-category $\Comod_\CM(A)$ is closed and the $\CM$-category $\comod_\CM(A)$ is rigid. Also, the dual $A^*=\{A_x^*\}_{x \in H}$ of a Hopf $\CM$-algebra $A$ of finite type is a Hopf $\CM$-coalgebra (see Section~\ref{sect-dual-notion}) and the $\CM$-graded monoidal categories $\mmod_\CM(A^*)$ and $\comod_\CM(A)$ are isomorphic.
Conversely, the dual $A^*=\bigoplus_{x \in H} A_x^*$ of a Hopf $\CM$-coalgebra $A=\{A_x\}_{x \in H}$ of finite type is a Hopf $\CM$-algebra and the $\CM$-graded monoidal categories $\comod_\CM(A^*)$ and $\mmod_\CM(A)$ are  isomorphic.

\section{Hopf crossed module-modules and integrals}\label{sect-Hopf-modules-integrals} 
Throughout this section, $\CM \co E \to H$ is a crossed module and $A=\{A_x\}_{x \in H}$ is a Hopf $\CM$-coalgebra (over $\kk$). We introduce Hopf $\CM$-modules over $A$ and prove a structure theorem for them. Next we use this theorem to prove the existence and uniqueness of $\CM$-integrals for $A$.

\subsection{Hopf $\CM$-modules}\label{sect-Hopf-modules-Xi-coalgebras}
A \emph{(left-left) Hopf $\CM$-module} over the Hopf $\CM$-coalgebra~$A$ is a  family $M=\{M_x\}_{x \in H}$ of \kt modules endowed with three families
\begin{align*}
& r=\{r_x \co A_x \otimes M_x \to M_x\}_{x \in H}, \\
& \rho=\{\rho_{x,y} \co M_{xy} \to A_x \otimes M_y\}_{x,y \in H}, \\
&\psi=\{\psi_{x,e} \co M_x \to M_{\CM(e)x} \}_{(x,e) \in H \times E},
\end{align*}
of \kt linear homomorphisms such that:
\begin{enumerate}
  \labela
  \item  For all $x \in H$, the pair $(M_x,r_x)$ is a (left) $A_x$-module.
  \item  The pair $(M,\rho)$ is a (left) $A$-comodule, that is, for all $x,y,z \in H$,
$$
(\Delta_{x,y}\otimes \id_{M_z}) \rho_{xy,z}=(\id_{A_x} \otimes \rho_{y,z}) \rho_{x,yz} \quad \text{and} \quad
(\varepsilon \otimes \id_{M_x}) \rho_{1,x}=\id_{M_x}.
$$
  \item The action $r$ and coaction $\rho$ intertwine as follows: for all $x,y \in H$,
$$
\rho_{x,y}r_{xy}=(\mu_x \otimes r_y)(\id_{A_x} \otimes \sigma_{A_y,A_x} \otimes \id_{M_y})(\Delta_{x,y} \otimes \rho_{x,y}).
$$
\item For all  $x,y \in H$ and $e,f \in E$,
\begin{align*}
 \psi_{x,1}&=\id_{M_x}, &\psi_{e,x}r_x&=r_{\CM(e)x} (\phi_{x,e} \otimes \psi_{x,e}), \\
\psi_{x,fe} &=\psi_{\CM(e)x,f}\, \psi_{x,e}, & (\phi_{x,e} \otimes \psi_{y,f}) \rho_{x,y}&=\rho_{\CM(e)x,\CM(f)y}\, \psi_{xy,e\lact{x}{\!f}}.
\end{align*}
\end{enumerate}
Here $\mu=\{\mu_x\}_{x \in H}$, $\Delta=\{\Delta_{x,y}\}_{x,y \in H}$, $\varepsilon$, and  $\phi=\{\phi_{x,e}\}_{(x,e) \in H \times E}$ are the product, coproduct, counit, and $\CM$-action of $A$, respectively. Note that $\bigoplus_{x \in H} M_x$ is then  an $A$-module in the sense of Sections~\ref{sect-modules-coalgebras} and~\ref{sect-modules-Xi-coalgebras-def}.

\subsection{Morphisms of Hopf $\CM$-modules}\label{sect-morphisms-Hopf-modules}
Consider two Hopf $\CM$-modules $M=(M,r,\rho,\psi)$ and  $M'=(M',r',\rho',\psi')$ over $A$. A \emph{morphism of Hopf $\CM$-modules} from~$M$ to~$M'$ is a family $\theta=\{\theta_x\co M_x \to M'_x\}_{x \in H}$ of \kt linear homomorphisms such that
each $\theta_x$ is $A_x$-linear,  $\theta$ is a morphism of $A$-comodules, and $\theta$ is a $\CM$\ti equivariant:
$$
r'_x(\id_{A_x} \otimes  \theta_x) = \theta_x r_x,\quad
(\id_{A_x} \otimes \theta_y) \rho_{x,y}= \rho'_{x,y} \theta_{xy}, \quad \theta_{\CM(e)x}\psi_{x,e}=\psi'_{x,e} \theta_x
$$
for all $x,y \in H$ and $e \in E$.

Clearly the composition (componentwise) of two morphisms of Hopf $\CM$-modules is a morphism of Hopf $\CM$-modules.

\subsection{Modules of coinvariants}\label{sect-coinvariants}
The \emph{module of coinvariants} of a  Hopf $\CM$-module $M=(M,r,\rho,\psi)$ over $A$  is the \kt submodule $M^{\mathrm{co} A}$ of $\prod_{x \in H} M_x$ consisting of the elements $m=(m_x)_{x \in H}$ such that for all $x,y \in H$ and $e \in E$,
$$
\rho_{x,y} (m_{xy})=1_x \otimes m_y \quad \text{and} \quad  \psi_{x,e}  (m_x)=m_{\CM(e)x},
$$
where $1_x$ is the unit element of $A_x$.

Any morphism of Hopf $\CM$-modules $\theta \co M \to M'$ induces a \kt linear homomorphism
$\theta^{\mathrm{co} A} \co M^{\mathrm{co} A}  \to (M')^{\mathrm{co} A}$ defined by
$$
\theta^{\mathrm{co} A}\bigl((m_x)_{x \in H} \bigr)=\bigl(\theta_x(m_x)\bigr)_{x \in H}.
$$

\subsection{Trivial Hopf $\CM$-modules}\label{sect-exa-trivial-Hopf-module}
Let $V$ be a \kt module. Then
$$
\left ( \{A_x \otimes V\}_{x \in H}, \{\mu_x \otimes \id_V\}_{x \in H}, \{\Delta_{x,y}\otimes \id_V\}_{x,y \in H},
\{\phi_{x,e} \otimes \id_V \}_{(x,e) \in H \times E}\right )
$$
is a Hopf $\CM$-module over $A$, denoted $A \otimes V$. Its module of coinvariants is
$$
(A \otimes V)^{\mathrm{co} A}=\{(1_x \otimes v)_{x \in H} \, | \, v \in V \}.
$$
Note that the assignment $v \mapsto (1_x \otimes v)_{x \in H}$ is a \kt linear isomorphism $V\cong(A \otimes V)^{\mathrm{co} A}$  with inverse $(m_x)_{x \in H} \mapsto (\varepsilon \otimes \id_V)(m_1)$.

Clearly, if $\alpha \co V \to W$ is a \kt linear homomorphism, then $\{\id_{A_x} \otimes \alpha \}_{x \in H}$ is a morphism of Hopf $\CM$-modules from $A \otimes V$ to $A \otimes W$.

\subsection{Structure of Hopf $\CM$-modules}\label{sect-Hopf-modules-structure}
Let $\Mod_\kk$ be the category of \kt modules and \kt linear homomorphisms. Denote by $\Hmod{A}$ the category of Hopf $\CM$-modules over $A$ and their morphisms. The trivial Hopf $\CM$-modules (see Section~\ref{sect-exa-trivial-Hopf-module}) and the modules of coinvariants (see Section~\ref{sect-coinvariants}) induce the following functors: 
$$
A \otimes ? \co  \Mod_\kk \to \Hmod{A} \quad \text {and} \quad ?^{\mathrm{co} A} \co \Hmod{A} \to \Mod_\kk.
$$

\begin{thm}\label{thm-structure-Hopf-Xi-modules}
The functors $A \otimes ?$ and $?^{\mathrm{co} A}$ are equivalences and are quasi-inverse of each other. In particular, any  Hopf $\CM$-module $M$ over $A$ is isomorphic to the trivial Hopf $\CM$-module $A \otimes M^{\mathrm{co} A}$.
\end{thm}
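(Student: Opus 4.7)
The plan is to exhibit the equivalence directly by constructing an inverse to the unit $V \to (A \otimes V)^{\mathrm{co} A}$, $v \mapsto (1_x \otimes v)_{x \in H}$, which was already verified to be a \kt linear isomorphism in Section~\ref{sect-exa-trivial-Hopf-module} (and is manifestly natural in $V$). Thus the main work is to build, for each Hopf $\CM$-module $M=(M,r,\rho,\psi)$, a natural isomorphism $M \cong A \otimes M^{\mathrm{co} A}$ of Hopf $\CM$-modules. The generalized Fundamental Theorem of Hopf Modules suggests the shape of the argument: one direction is the obvious action-evaluation map, and the other is constructed from a projection onto coinvariants built out of the coaction, the antipode, and the action.

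First I would define $\alpha_M \co A \otimes M^{\mathrm{co} A} \to M$ componentwise by
$$
\alpha_{M,x} \bigl( a \otimes (m_y)_{y \in H} \bigr) = r_x(a \otimes m_x) \quad \text{for } a \in A_x.
$$
The axioms of a Hopf $\CM$-module (together with the defining relations of $M^{\mathrm{co} A}$) imply routinely that $\alpha_M$ is a morphism of Hopf $\CM$-modules and that it is natural in $M$. Next I would construct a candidate inverse $\beta_M \co M \to A \otimes M^{\mathrm{co} A}$. For $m \in M_x$, applying $\rho_{x,1}$ gives an element of $A_x \otimes M_1$ which I write $\sum m_{(1)} \otimes m_{(0)}$. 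Then I would define a projection $P \co M_1 \to M^{\mathrm{co} A}$ by
$$
P(n) = \bigl( r_y (S_y \otimes \id_{M_y}) \rho_{y^{-1},y}(n) \bigr)_{y \in H}
$$
and set $\beta_{M,x}(m) = \sum m_{(1)} \otimes P(m_{(0)})$.

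The core checks are then (i) that $P(n)$ really lies in $M^{\mathrm{co} A}$, (ii) that $\beta_M$ is $A$-linear, $A$-colinear, and $\psi$-equivariant, and (iii) that $\alpha_M$ and $\beta_M$ are mutually inverse. Claim~(i) splits into two parts: the coaction condition $\rho_{a,b}(P(n)_{ab}) = 1_a \otimes P(n)_b$ follows by using Hopf-module axiom~(c) to move $\rho_{a,b}$ past $r_{ab}$, then using coassociativity of $\rho$, anti-comultiplicativity of the antipode, and the antipode axiom $\mu(\id \otimes S)\Delta = \eta\varepsilon$, exactly as in the classical Hopf module proof. The $\CM$-equivariance condition $\psi_{y,e}(P(n)_y) = P(n)_{\CM(e)y}$ is the place where the crossed-module structure genuinely enters, and I expect it to be the main obstacle; it should follow by pushing $\psi_{y,e}$ through $r_y$ via axiom~(d) (\emph{the} Hopf-$\CM$-module action-equivariance), then through $S_y$ via Lemma~\ref{lem-Xi-antipode} (the compatibility of the antipode with the $\CM$-action), and finally through $\rho_{y^{-1},y}$ via the second half of axiom~(d). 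Claim~(ii) is a similar but easier diagram chase using axioms (c) and (d) and naturality of~$\rho$.

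Finally, the identity $\alpha_M \beta_M = \id_M$ reduces to $r_x(\id_{A_x} \otimes P)(\rho_{x,1}) = \id_{M_x}$; unfolding the definition of $P$ and applying the Hopf-module axiom (c), coassociativity, and the antipode axiom telescopes this to $r_x \circ (\id_{A_x} \otimes \varepsilon \cdot \eta_x) \circ \rho_{x,1} = \id_{M_x}$, which is the counit identity for $\rho$. The converse identity $\beta_M \alpha_M = \id_{A \otimes M^{\mathrm{co} A}}$ follows by evaluating on $a \otimes (m_y)_y$, using that $\rho_{x,1}(m_x) = \mu_x \otimes \id$ applied to $(\id \otimes \rho_{1,1})(\Delta_{x,1} \otimes \id)$ of a coinvariant collapses via the coinvariant condition $\rho_{x,1}(m_x) = 1_x \otimes m_1$ (derived from $\rho_{a,b}(m_{ab}) = 1_a \otimes m_b$) and the module unit axiom. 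Naturality of $\beta_M$ in $M$ then gives the promised natural equivalence $\id \Rightarrow (A \otimes ?) \circ ?^{\mathrm{co} A}$, completing the proof.
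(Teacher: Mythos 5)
Your proposal is correct and follows essentially the same route as the paper: the map $\alpha_M$ is the paper's adjunction counit $\epsilon_M$, your projection $P$ is the paper's $\pi(m)=\bigl(r_x(S_x\otimes\id_{M_x})\rho_{x^{-1},x}(m)\bigr)_{x\in H}$, and $\beta_M=(\id\otimes P)\rho_{x,1}$ is the paper's $\nu_M$, with the same identification of Lemma~\ref{lem-Xi-antipode} as the key input for the $\CM$-equivariance of the coinvariant projection. The only cosmetic difference is that the paper packages the two natural isomorphisms as the unit and counit of the adjunction $A\otimes ?\dashv\, ?^{\mathrm{co} A}$ rather than verifying them as mutually quasi-inverse functors directly.
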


\begin{proof}
It follows from the definitions that $?^{\mathrm{co} A}$ is right adjoint to  $A \otimes ?$ with unit $\eta_V \co V \to (A \otimes V)^{\mathrm{co} A}$ and counit $\epsilon_M=\{\epsilon_M^x \co A_x \otimes M^{\mathrm{co} A} \to M_x\}_{x \in H}$ given by
$$
\eta_V(v)=(1_x \otimes v)_{x \in H} \quad \text {and} \quad \epsilon_M^x\bigl(a \otimes (m_y)_{y \in H}\bigr)=r_x(a \otimes m_x).
$$
By Section~\ref{sect-exa-trivial-Hopf-module}, the unit $\eta$ is an isomorphism. Let us prove that the counit $\epsilon$ is also an isomorphism. Let $M$ be any Hopf $\CM$-module over $A$. To prove that $\epsilon_M$ is an isomorphism of Hopf $\CM$-modules from  $A \otimes M^{\mathrm{co} A}$ to $M$, we exhibit its inverse: defining $\pi \co M_1 \to M^{\mathrm{co} A}$ as
$$
\pi(m)=\left (r_x(S_x \otimes \id_{M_x})\rho_{x^{-1},x}(m)\right)_{x \in H},
$$
where $S=\{S_x\}_{x \in H}$ is the antipode of $A$, it follows from the definitions that
$$
\nu_M=\{\nu_M^x=(\id_{A_x} \otimes \pi)\rho_{x,1} \co M_x \to A_x \otimes M^{\mathrm{co} A} \}_{x \in H}
$$
is the inverse of $\epsilon_M$.
\end{proof}

\subsection{Integrals}\label{sect-integ-def}
A \emph{left} (respectively, \emph{right}) \emph{$\CM$-integral} for $A$ is a family of $\kk$-linear forms $\lambda=(\lambda_x \co A_x \to \kk)_{x \in H}$ such that for all $x,y \in H$ and $e \in E$,
\begin{itemize}
\item $(\id_{A_x} \otimes \lambda_y) \Delta_{x,y} = \eta_x\lambda_{xy}$  \quad (respectively, $(\lambda_x \otimes \id_{A_y}) \Delta_{x,y} = \eta_y\lambda_{xy}$),
\item $\lambda_{\CM(e)x}\phi_{x,e} =\lambda_x$.
\end{itemize}
Here $\{\eta_x \co \kk \to A_x\}_{x \in H}$,  $\Delta=\{\Delta_{x,y}\}_{x,y \in H}$, and $\phi=\{\phi_{x,e}\}_{(x,e) \in H \times E}$ are the unit maps, coproduct, and $\CM$-action of $A$.

We denote by $\mathcal{I}_A^l$ (respectively, $\mathcal{I}_A^r$) the set of left (respectively, right) $\CM$-integrals for $A$.
The sets  $\mathcal{I}_A^l$ and $\mathcal{I}_A^r$ are \kt modules (as submodules of the \kt module $\prod_{x \in H} A_x^*$).  They are isomorphic: it follows from the properties of the antipode $S$ of~$A$ (see Section~\ref{sect-Hopf-H-coalgebras-antipodes} and Lemma~\ref{lem-Xi-antipode}) that the map
$$
\mathcal{I}_A^l \to \mathcal{I}_A^r, \quad \lambda=(\lambda_x)_{x \in H}  \mapsto \lambda^S=(\lambda_x^S=\lambda_{x^{-1}}S_{x^{-1}})_{x \in H}
$$
is a \kt linear isomorphism.

Note that a $\CM$-integral for $A$ is in particular an $H$-integral (in the sense of \cite{Vi2}) for the Hopf $H$-coalgebra underlying $A$.  (An $H$-integral for a Hopf $H$-coalgebra verifies only the first of the above two axioms of a $\CM$-integral.)
Consequently, by \cite[Lemma 3.1]{Vi2}, if a left or right $\CM$-integral $\lambda=(\lambda_x)_{x \in H}$ for $A$ is non-zero, then $\lambda_x\neq 0$ for all $x \in H$ such that $A_x \neq 0$, and in particular $\lambda_1 \neq 0$.

\subsection{Existence and uniqueness of $\CM$-integrals}
It is known (see \cite{sweed2}) that the space of left (respectively, right) integrals for a finite
dimensional Hopf algebra over a field is one dimensional. We generalize this result to Hopf $\CM$-coalgebras:

\begin{thm}\label{thm-existence_integrals}
Assume that $\kk$ is a field and that $A$ is a Hopf $\CM$-coalgebra (over $\kk$) of finite type (that is, each~$A_x$ is finite dimensional).
Then the spaces $\mathcal{I}_A^l$ and $\mathcal{I}_A^r$ are both one dimensional. 
\end{thm}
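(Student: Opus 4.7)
The plan is to adapt Sweedler's classical proof of existence and uniqueness of integrals for finite-dimensional Hopf algebras, with the structure theorem for Hopf $\CM$-modules (Theorem~\ref{thm-structure-Hopf-Xi-modules}) as the main engine. First, the \kt linear isomorphism $\lambda\mapsto\lambda^S$ between $\mathcal{I}_A^l$ and $\mathcal{I}_A^r$ constructed in Section~\ref{sect-integ-def} immediately reduces the theorem to showing $\dim_\kk\mathcal{I}_A^l=1$.

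The key step is to equip the family $M=\{M_x=A_x^*\}_{x\in H}$ with a Hopf $\CM$-module structure over $A$ whose coinvariants recover $\mathcal{I}_A^l$. I would take the left $A_x$-action on $A_x^*$ to be the transpose of right multiplication, $(a\cdot f)(b)=f(ba)$, and the $\CM$-action to be $\psi_{x,e}(f)=f\circ\phi_{\CM(e)x,e^{-1}}$; with this choice the coinvariance relation $\psi_{x,e}(\lambda_x)=\lambda_{\CM(e)x}$ unravels to $\lambda_{\CM(e)x}\phi_{x,e}=\lambda_x$, matching the second axiom of a left $\CM$-integral. The coaction $\rho_{x,y}\co A_{xy}^*\to A_x\otimes A_y^*$ must be chosen so that $\rho_{x,y}(\lambda_{xy})=1_x\otimes\lambda_y$ is equivalent to the convolution identity $(\id_{A_x}\otimes\lambda_y)\Delta_{x,y}=\eta_x\lambda_{xy}$; using the identification $A_x\otimes A_y^*\cong\Hom_\kk(A_y,A_x)$ afforded by the finite-type hypothesis, this determines $\rho_{x,y}$ uniquely as the transpose of a map assembled from the restricted coproducts of $A$ and twisted by the antipode (to compensate for the twist introduced by the right multiplication in the definition of $r$). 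Verifying the four axioms of Section~\ref{sect-Hopf-modules-Xi-coalgebras} then reduces to a routine (if tedious) diagram chase from the Hopf $\CM$-coalgebra axioms of $A$, Lemma~\ref{lem-Xi-antipode}, and equations~\eqref{eq-Xi-act-1}--\eqref{eq-Xi-act-3}.

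Once $M^{\mathrm{co} A}=\mathcal{I}_A^l$ is established, Theorem~\ref{thm-structure-Hopf-Xi-modules} yields a Hopf $\CM$-module isomorphism $M\cong A\otimes\mathcal{I}_A^l$. Restricting to the $x=1$ component produces a \kt linear isomorphism $A_1^*\cong A_1\otimes_\kk\mathcal{I}_A^l$. Since $A_1$ is a nonzero finite-dimensional \kt vector space with $\dim_\kk A_1^*=\dim_\kk A_1$, comparing dimensions forces $\dim_\kk\mathcal{I}_A^l=1$.

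The hardest part will be the precise definition of the coaction $\rho_{x,y}$ and the verification of its coassociativity together with its compatibility with the action $r$ (axiom (c) of Section~\ref{sect-Hopf-modules-Xi-coalgebras}); all remaining axiom checks and the identification of the coinvariants with the integrals then follow by direct unraveling of definitions. The finite-type hypothesis is essential at two points: to define $\rho_{x,y}$ via the canonical identification above, and to conclude via the dimension count at the end.
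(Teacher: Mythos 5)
Your overall architecture is exactly the paper's: reduce left integrals to right (or vice versa) via the antipode, put a Hopf $\CM$-module structure on a family of duals of the $A_x$ whose coinvariants are the integrals, invoke Theorem~\ref{thm-structure-Hopf-Xi-modules}, and count dimensions on the component over $1\in H$. The final dimension count and the use of the structure theorem are correct as stated.

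However, there is a genuine gap at the crucial step, namely your choice of indexing $M_x=A_x^*$. A coaction of the shape required by Section~\ref{sect-Hopf-modules-Xi-coalgebras}, $\rho_{x,y}\co M_{xy}\to A_x\otimes M_y$, must (after using finite type to identify $A_x\otimes M_y$ with $\Hom_\kk(A_x^*,M_y)$) come from an action of $A_x^*$ sending $M_{xy}$ into $M_y$. The only such actions available are the convolution multiplications $A_u^*\otimes A_v^*\to A_{uv}^*$, i.e.\ the partial transposes of the $\Delta_{u,v}$. With $M_z=A_z^*$, left (resp.\ right) convolution by $A_x^*$ sends $A_u^*$ to $A_{xu}^*$ (resp.\ $A_{ux}^*$), so it sends $M_{xy}$ to $M_{x\cdot xy}$ or $M_{xy\cdot x}$, never to $M_y$ when $H$ is nonabelian; no combination with $S$ or $\phi$ repairs this, since those only invert an index or multiply it by $\Ima(\CM)$. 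So the map $\rho_{x,y}\co A_{xy}^*\to A_x\otimes A_y^*$ you posit cannot be ``assembled from the restricted coproducts'', and the assertion that the coinvariance requirement ``determines $\rho_{x,y}$ uniquely'' is also not right: prescribing where certain elements must go does not determine a linear map. The fix, which is what the paper does, is to set $M_x=A_{x^{-1}}^*$; then right convolution by $A_x^*$, i.e.\ the partial transpose of $\Delta_{(xy)^{-1},x}\co A_{y^{-1}}\to A_{(xy)^{-1}}\otimes A_x$, does send $M_{xy}=A_{(xy)^{-1}}^*$ to $M_y=A_{y^{-1}}^*$ and yields the coaction (with an antipode twist in the action $r_x$ and the $\CM$-action of $A$ twisted by $\lact{x^{-1}}{e}$ in $\psi_{x,e}$). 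With that indexing the coinvariants are identified, via $\lambda\mapsto(\lambda_{x^{-1}})_{x\in H}$, with the \emph{right} $\CM$-integrals rather than the left ones, so the antipode reduction is then used at the end to pass from $\mathcal{I}_A^r$ to $\mathcal{I}_A^l$. Once the indexing is corrected, the rest of your argument goes through and coincides with the paper's proof.
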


\begin{proof}
For any $x,y \in H$ and $e \in E$, set $M_x=A_{x^{-1}}^*=\Hom_\kk(A_{x^{-1}},\kk)$  and define the \kt linear homomorphisms
$$
r_x\co A_x \otimes M_x \to M_x, \quad \rho_{x,y} \co M_{xy} \to A_x \otimes M_y, \quad \psi_{x,e}\co M_x \to M_{\CM(e)x}
$$
by
$$
\psfrag{x}[Br][Br]{\scalebox{.8}{$x$}}
\psfrag{a}[Bl][Bl]{\scalebox{.8}{$x^{-1}$}}
\psfrag{u}[Bl][Bl]{\scalebox{.9}{$y^{-1}$}}
\psfrag{c}[Bl][Bl]{\scalebox{.9}{$(xy)^{-1}$}}
\psfrag{n}[Bl][Bl]{\scalebox{.9}{$(\CM(e)x)^{-1}$}}
\psfrag{e}[Bl][Bl]{\scalebox{.9}{$\lact{{x^{-1}}}{\!\!e}$}}
r_x=\rsdraw{.45}{.9}{HM-r} \qquad \rho_{x,y}=\rsdraw{.45}{.9}{HM-delta}  \qquad \psi_{x,e}=\rsdraw{.45}{.9}{HM-psi} \quad.
$$
Here we use the convention of Section~\ref{sect-Penrose} for oriented arcs, a color $z \in H$ is an abbreviation for $A_z$ (as in Section~\ref{sect-Hopf-H-coalgebras}), and the dot represents the $\CM$-action of $A$ (see Section~\ref{sect-crossed-module-actions}).
It follows from the axioms of a Hopf $\CM$-coalgebra and Lemma~\ref{lem-Xi-antipode} that
$$
M=\bigl(M=\{M_x\}_{x \in H}, r=\{r_x\}_{x \in H}, \rho=\{\rho_{x,y}\}_{x,y \in H},\psi=\{\psi_{x,e}\}_{(x,e) \in H \times E}\bigr)
$$
is a Hopf $\CM$-module over $A$. Then, by Theorem~\ref{thm-structure-Hopf-Xi-modules}, $M$ is isomorphic to the trivial Hopf $\CM$-module $A \otimes M^{\mathrm{co} A}$. Now it follows from the definitions of $\rho$, $\psi$, and of a right $\CM$-integral that the map
$$
\mathcal{I}_A^r \to M^{\mathrm{co} A}, \quad \lambda=(\lambda_x)_{x \in H}  \mapsto \lambda^{\mathrm{co}}=(\lambda_{x^{-1}})_{x \in H}
$$
is a \kt linear isomorphism. Thus $M$ is isomorphic to the trivial Hopf $\CM$-module $A \otimes \mathcal{I}_A^r$. In particular the \kt vector spaces $M_1$ and $A_1 \otimes\mathcal{I}_A^r$ are isomorphic and so
$$
\dim(M_1)=\dim(A_1 \otimes\mathcal{I}_A^r)=\dim(A_1)\dim(\mathcal{I}_A^r).
$$
Now $\dim(M_1)=\dim(A_1)$ (since $M_1=A_1^*$ and $A_1$ is finite dimensional) and $\dim(A_1)\neq0$  (since $A_1 \neq 0$ because $\varepsilon(1_1)=1_\kk \neq 0$). Consequently $\dim(\mathcal{I}_A^r)=1$. Using that $\mathcal{I}_A^l$ is isomorphic to $\mathcal{I}_A^r$ (see Section~\ref{sect-integ-def}), we get that $\dim(\mathcal{I}_A^l)=1$.
\end{proof}

\subsection{The distinguished $\CM$-grouplike element}\label{sesct-disting-grouplike}
As in Theorem~\ref{thm-existence_integrals}, we assume in this subsection that $\kk$ is a field and that $A$  is  a Hopf $\CM$-coalgebra (over $\kk$) of finite type.

\begin{cor}\label{cor-disting-grouplike}
There exists a unique $\CM$-grouplike element $g=(g_x)_{x \in H}$ of $A$ such that $
 (\id_{A_x} \otimes \lambda_y)\Delta_{x,y}=  g_x \lambda_{xy}$ for any right $\CM$-integral $\lambda$ of $A$ and all $x,y \in H$.
\end{cor}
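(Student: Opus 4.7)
The plan is to fix a nonzero right $\CM$-integral $\lambda=(\lambda_y)_{y\in H}$ (which exists and spans $\mathcal{I}_A^r$ by Theorem~\ref{thm-existence_integrals}), and to extract $g_x\in A_x$ for each $x\in H$ from an auxiliary integral constructed from $\lambda$. Specifically, for each $x\in H$ and each $f\in A_x^{*}$, I would define the family $\mu^f=(\mu^f_z)_{z\in H}$ by
$$
\mu^f_z \;=\; f\circ (\id_{A_x}\otimes \lambda_{x^{-1}z})\circ \Delta_{x,\,x^{-1}z}\co A_z\to \kk,
$$
and show that $\mu^f$ is again a right $\CM$-integral. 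The right-integral axiom $(\mu^f_z\otimes\id_{A_w})\Delta_{z,w}=\eta_w\mu^f_{zw}$ follows from coassociativity (applied to $(\Delta_{x,x^{-1}z}\otimes\id)\Delta_{z,w}=(\id\otimes\Delta_{x^{-1}z,w})\Delta_{x,x^{-1}zw}$) together with the right-integral property of $\lambda$ for the inner pair $(x^{-1}z,w)$. The $\CM$-equivariance $\mu^f_{\CM(e)z}\phi_{z,e}=\mu^f_z$ is the more delicate check: using axiom \eqref{eq-Xi-act-3} with the choice of $\CM$-parameter $\lact{x^{-1}}{e}\in E$ (whose image under $\CM$ is $x^{-1}\CM(e)x$ by the equivariance of $\CM$), one rewrites $\Delta_{x,\,x^{-1}\CM(e)z}\phi_{z,e}=(\id\otimes\phi_{x^{-1}z,\lact{x^{-1}}{e}})\Delta_{x,x^{-1}z}$, and then absorbs the $\phi$ into $\lambda_{x^{-1}\CM(e)z}$ via the $\CM$-equivariance of~$\lambda$.

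By Theorem~\ref{thm-existence_integrals}, $\mu^f$ must be a scalar multiple $c(f)\lambda$ of $\lambda$, and $f\mapsto c(f)$ is a linear form on $A_x^{*}$. Since $A_x$ is finite-dimensional, there is a unique $g_x\in A_x$ with $c(f)=f(g_x)$ for every $f\in A_x^{*}$. Specializing $z=xy$ and letting $f$ range over $A_x^{*}$ in the identity $\mu^f_{xy}(a)=f(g_x)\lambda_{xy}(a)$ yields
$$
(\id_{A_x}\otimes \lambda_y)\Delta_{x,y}(a)=g_x\,\lambda_{xy}(a)\qquad\text{for all } y\in H,\ a\in A_{xy}.
$$
Uniqueness of $g_x$ follows by taking $y=x^{-1}$ and using that $\lambda_1\neq 0$ (see Section~\ref{sect-integ-def}); independence of the choice of right $\CM$-integral is immediate since any such integral is a scalar multiple of $\lambda$.

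It remains to verify that $g=(g_x)_{x\in H}$ is $\CM$-grouplike. Applying $\varepsilon$ to the defining equality at $x=1$ and using counitality of $\Delta$ gives $\varepsilon(g_1)=1_\kk$. Applying $\Delta_{x,y}\otimes\id$ to the defining equality at $(xy,z)$ and invoking coassociativity together with the defining equalities at $(y,z)$ and $(x,yz)$ yields $\Delta_{x,y}(g_{xy})\lambda_{xyz}(a)=(g_x\otimes g_y)\lambda_{xyz}(a)$; taking $z=(xy)^{-1}$ with $\lambda_1(a)\neq 0$ gives the comultiplicativity. Finally, applying $\phi_{x,e}\otimes\id$ to the defining equality and using the case $f=1$ of axiom \eqref{eq-Xi-act-3} combined with the $\CM$-equivariance of $\lambda$ gives $\phi_{x,e}(g_x)\lambda_{xy}(a)=g_{\CM(e)x}\lambda_{xy}(a)$, whence $\phi_{x,e}(g_x)=g_{\CM(e)x}$. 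The main obstacle is the bookkeeping in the second step above: one must pick the $E$-parameter $\lact{x^{-1}}{e}$ correctly so that axiom~\eqref{eq-Xi-act-3} interlocks with the Peiffer-type identities of~$\CM$ to produce the required $\CM$-equivariance of~$\mu^f$.
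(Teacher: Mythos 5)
Your proof is correct, but it takes a different route from the paper's. The paper first observes that $\mathcal{I}_A^r$ coincides with the space of right $H$-integrals for the underlying Hopf $H$-coalgebra (both being one-dimensional, by Theorem~\ref{thm-existence_integrals} and \cite[Theorem 3.6]{Vi2}), then simply invokes \cite[Lemma 4.1]{Vi2} to obtain the unique distinguished $H$-grouplike element $g$, and finally derives the $\CM$-equivariance $\phi_{x,e}(g_x)=g_{\CM(e)x}$ indirectly: it checks that the twisted family $\bigl(\phi_{\CM(e)x,e^{-1}}(g_{\CM(e)x})\bigr)_{x\in H}$ also satisfies the defining condition, so uniqueness forces it to equal $g$. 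You instead reconstruct the distinguished element from scratch: the auxiliary families $\mu^f$ are exactly the mechanism behind \cite[Lemma 4.1]{Vi2}, and your verification that they are right $\CM$-integrals (in particular the $\CM$-equivariance via \eqref{eq-Xi-act-3} with parameters $(1,\lact{x^{-1}}{e})$, using $\CM(\lact{x^{-1}}{e})=x^{-1}\CM(e)x$) is the key computation; the direct checks of $\varepsilon(g_1)=1_\kk$, of $\Delta_{x,y}(g_{xy})=g_x\otimes g_y$ via coassociativity, and of $\phi_{x,e}(g_x)=g_{\CM(e)x}$ via \eqref{eq-Xi-act-3} with parameters $(e,1)$ and the equivariance of $\lambda$, all go through (each time specializing to $y=(\text{inverse})$ and using $\lambda_1\neq 0$). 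Your approach buys self-containedness --- it does not rely on the identification of $\CM$-integrals with $H$-integrals nor on the external reference --- at the cost of redoing the Hopf $H$-coalgebra computation; the paper's approach is shorter but leans on \cite{Vi2} and on a uniqueness trick for the equivariance.
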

\begin{proof}
First notice that the space $\mathcal{I}_A^r$ of right $\CM$-integrals for $A$ coincide with the space of right $H$-integrals for the Hopf $H$-coalgebra underlying $A$ because both are one dimensional (by Theorem~\ref{thm-existence_integrals} and \cite[Theorem 3.6]{Vi2}) and any right $\CM$-integral is a right $H$-integral. Then, by \cite[Lemma 4.1]{Vi2}, there is a unique $H$-grouplike element $g=(g_x)_{x \in H}$ of $A$ satisfying the condition of the corollary. Now it follows from the properties of the $\CM$-action $\phi$ of $A$ that $\bigl(\phi_{\CM(e)x,e^{-1}}(g_{\CM(e)x})\bigr)_{x \in H}$ is also an $H$-grouplike element of $A$ satisfying the condition of the lemma. Then the uniqueness of such an $H$-grouplike element implies that $\phi_{\CM(e)x,e^{-1}}(g_{\CM(e)x})=g_x$, and so $g_{\CM(e)x}=\phi_{x,e}(g_x)$, for all $x \in H$ and $e \in E$. Hence $g$ is a $\CM$-grouplike element.
\end{proof}

The $\CM$-grouplike element $g=(g_x)_{x \in H}$ of Corollary~\ref{cor-disting-grouplike} is called the \emph{distinguished
$\CM$-grouplike element} of $A$. Note that $g_1$ is the (usual) distinguished grouplike element of the Hopf algebra
$A_1$.  Also $g$ is the distinguished $H$-grouplike element of the Hopf $H$-coalgebra underlying $A$ (see \cite[Section 4]{Vi2}).

\section{Hopf crossed module-(co)algebras as Hopf algebras}\label{sect-Hopf-Xi-as-Hopf}

Throughout this section, we let $\sss=(\sss,\otimes,\un)$  be a symmetric monoidal category. We first define Hopf crossed module-(co)algebras in $\sss$ and then interpret them as Hopf algebras in some symmetric monoidal category associated with $\sss$. As a corollary, when $\sss$ is the category of \kt modules, we obtain that  the Hopf crossed module-(co)algebras over $\kk$ considered above are Hopf algebras in some symmetric monoidal category.

\subsection{Hopf crossed module-(co)algebra in symmetric monoidal categories}
Let $\CM \co E \to H$ be a crossed module. The notion of a Hopf $\CM$-(co)algebra in $\sss$ can be defined in the exact same way as in Sections~\ref{sect-Hopf-crossed-module-coalgebras} and \ref{sect-dual-notion} by replacing \kt modules with objects of $\sss$ and \kt linear homomorphisms with morphisms in $\sss$.

Explicitly, a \emph{Hopf $\CM$-coalgebra in $\sss$} is a family $A=\{A_x\}_{x \in H}$ of algebras in~$\sss$ endowed with a coproduct, a counit, an antipode, and a $\CM$-action which are respectively:
\begin{itemize}
\item a family $\Delta=\{\Delta_{x,y}\co A_{xy} \to A_x \otimes A_y \}_{x,y \in H}$  of algebra morphisms in $\sss$, 
\item an algebra morphism $\varepsilon\co A_1 \to \un$ in $\sss$,
\item a family $S=\{S_x \co A_{x^{-1}} \to A_x\}_{x \in H}$  of isomorphisms in $\sss$, 
\item a family $\phi=\{\phi_{x,e} \co A_x \to A_{\CM(e)x} \}_{(x,e) \in H \times E}$ of algebra isomorphisms in $\sss$.
\end{itemize}
These data should satisfy the following axioms: the coproduct must be coassociative and  counital 
as in Section~\ref{sect-label-H-coalgebras}, the antipode must satisfy the axiom of Section~\ref{sect-Hopf-H-coalgebras-antipodes}, and the $\CM$-action must satisfy the axioms \eqref{eq-Xi-act-1}-\eqref{eq-Xi-act-3} of Section~\ref{sect-crossed-module-actions}.

Similarly, a \emph{Hopf $\CM$-algebra in $\sss$} is a family $A=\{A_x\}_{x \in H}$ of coalgebras in~$\sss$ endowed with a product, a unit, an antipode, and a $\CM$-action which are respectively:
\begin{itemize}
\item a family $\mu=\{\mu_{x,y}\co A_x \otimes A_y \to A_{xy} \}_{x,y \in H}$  of coalgebra morphisms in $\sss$, 
\item a coalgebra morphism $\eta\co \un \to A_1$ in $\sss$,
\item a family $S=\{S_x \co A_x \to A_{x^{-1}}\}_{x \in H}$  of isomorphisms in $\sss$, 
\item a family $\phi=\{\phi_{x,e} \co A_x \to A_{\CM(e)x} \}_{(x,e) \in H \times E}$ of coalgebra isomorphisms in~$\sss$,
\end{itemize}
and which satisfy the axioms given in Section~\ref{sect-dual-notion}.

Note that these notions are dual to each other: Hopf $\CM$-algebras in $\sss$ bijectively correspond to Hopf $\CM$-coalgebras in the opposite category $\sss^\opp=(\sss^\opp,\otimes, \un)$ by exchanging the product and the coproduct and by replacing the  $\CM$-action $\phi$ with $\phi^{-1}=\{\phi^{-1}_{x,e} =\phi_{\CM(e)x,e^{-1}}\}_{(x,e) \in H \times E}$.

For example, let $\Mod_\kk$ be the symmetric monoidal category of  \kt modules and \kt linear homomorphisms. Then the Hopf $\CM$-coalgebras over $\kk$ (as defined in Section~\ref{sect-Hopf-crossed-module-coalgebras}) are exactly the Hopf $\CM$-coalgebras in $\Mod_\kk$. Likewise, the Hopf $\CM$-algebras over $\kk$ (as defined in Section~\ref{sect-dual-notion}) are exactly the Hopf $\CM$-algebras in $\Mod_\kk$ and also correspond to the Hopf $\CM$-coalgebras in $(\Mod_\kk)^\opp$.

\subsection{The category $\ee(\sss)$}\label{sect-ES}
We associate to $\sss$ a symmetric monoidal category $\ee(\sss)$ defined as follows.

The objects of $\ee(\sss)$ are the pairs $(\cc,F)$ where $\cc$ is a small category and $F\co \cc \to \sss$ is a functor. A morphism from an object $(\cc,F)$ to an object  $(\dd,G)$ is a pair $(\Gamma,\gamma)$ where $\Gamma\co \dd \to \cc$ is a functor and $\gamma=\{\gamma_Y\co F\Gamma(Y) \to G(Y)\}_{Y \in \dd}$ is a natural transformation (from $F \Gamma$ to $G$). The composition of $(\Gamma,\gamma)\co (\cc,F) \to (\dd,G)$ with $(\Lambda,\lambda)\co (\dd,G) \to (\mathcal{K},H)$ is defined by
$$
(\Lambda,\lambda) \circ (\Gamma,\gamma)=(\Gamma \Lambda, \lambda \gamma_\Lambda) \quad \text{where} \quad
\lambda \gamma_\Lambda=\{\lambda_Z \circ \gamma_{\Lambda(Z)}\}_{Z\in \mathcal{K}}.
$$
The identity of $(\cc,F)$ is $(1_\cc,\id_F)$ where $1_\cc \co \cc \to \cc$ is the identity functor and $\id_F=\{\id_{F(X)}\}_{X \in \cc}$. 
The monoidal product of two objects $(\cc,F)$ and $(\dd,G)$ is defined by
$$
(\cc,F) \boxtimes (\dd,G)=\bigl(\cc \times \dd, F\otimes G=\otimes \circ (F \times G) \co \cc \times  \dd \to \sss\bigr).
$$
The monoidal product of a morphism $(\Gamma,\gamma)\co (\cc,F) \to (\cc',F')$ with a morphism $(\Lambda,\lambda)\co (\dd,G) \to (\dd',G')$ is defined by
$$
(\Gamma,\gamma) \boxtimes (\Lambda,\lambda)=\bigl(\Gamma \times \Lambda,\gamma \otimes \lambda=\{\gamma_X \otimes \lambda_Y\}_{(X,Y) \in \cc' \times \dd'} \bigr).
$$
This yields a functor $\boxtimes \co \ee(\sss) \times \ee(\sss) \to \ee(\sss)$.
Set $I=(\mathbf{1},\underline{\un})$, where  $\mathbf{1}$  is the trivial category with a single object $\ast$ and a single morphism $\id_\ast$, and the functor $\underline{\un} \co \mathbf{1} \to \sss$ is defined by $\underline{\un}(\ast)=\un$. Then $\ee(\sss)=(\ee(\sss), \boxtimes,I)$ is a monoidal category (with the obvious monoidal constraints) and is symmetric with symmetry induced (in the obvious way) from the symmetries of $\Cat$ (see Section~\ref{sect-cartesian-monoidal-categories}) and $\sss$.

Note that the forgetful functor $\mathcal{U}\co\ee(\sss) \to \Cat^\opp$, defined by $\mathcal{U}(\cc,F)=\cc$ and $\mathcal{U}(\Gamma,\gamma)=\Gamma$,
is strict monoidal and symmetric.

\subsection{Hopf algebras in $\ee(\sss)$}\label{sect-Hopf-alg-ES}
In the next theorem, we characterize Hopf crossed module-coalgebras in $\sss$ as Hopf algebras in $\ee(\sss)$.
\begin{thm}\label{thm-characterization-Hopf-Xi-coalgebras-in-S}
Hopf algebras in $\ee(\sss)$ are in bijective correspondence with pairs $(\CM,A)$ where $\CM$ is a crossed module and $A$ is a Hopf $\CM$-coalgebra in $\sss$.
\end{thm}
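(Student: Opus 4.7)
The plan is to use the strict monoidal symmetric forgetful functor $\uu\co\ee(\sss)\to\Cat^\opp$ as the skeleton of the correspondence. Since Hopf algebras are self-dual (Section~\ref{sect-categorical-Hopf-algebras}), Hopf algebras in $\Cat^\opp$ are in bijection with Hopf algebras in $\Cat$, which by Section~\ref{sect-crossed-modules-as-Hopf-algebras} are precisely crossed modules. Consequently, for any Hopf algebra $(\cc,F)$ in $\ee(\sss)$, the category $\cc=\uu(\cc,F)$ is identified canonically with $\tg_\CM$ for a unique crossed module $\CM\co E\to H$, and the functor components of the structural morphisms of $(\cc,F)$ are forced to be $\Delta_\CM$, $\varepsilon_\CM$, $m_\CM$, $u_\CM$, $S_\CM$ (with the first two playing the role of product and unit in $\ee(\sss)$ and the next two playing the role of coproduct and counit, due to the contravariance in the functor parts of morphisms of $\ee(\sss)$).

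For the forward direction, given $\CM\co E\to H$ and a Hopf $\CM$-coalgebra $A=\{A_x\}_{x\in H}$ in $\sss$ with product $\{\mu_x\}$, unit $\{\eta_x\}$, coproduct $\{\Delta_{x,y}\}$, counit $\varepsilon^A$, antipode $\{S_x\}$, and $\CM$-action $\{\phi_{x,e}\}$, I define a functor $F_A\co\tg_\CM\to\sss$ by $F_A(x)=A_x$ and $F_A(x\xrightarrow{e}\CM(e)x)=\phi_{x,e}$; its functoriality is exactly Axioms~\eqref{eq-Xi-act-1}--\eqref{eq-Xi-act-2}. I then equip $(\tg_\CM,F_A)$ with structural morphisms $m=(\Delta_\CM,\{\mu_x\})$, $u=(\varepsilon_\CM,\{\eta_x\})$, $\Delta=(m_\CM,\{\Delta_{x,y}\})$, $\varepsilon=(u_\CM,\varepsilon^A)$, $S=(S_\CM,\{S_x\})$. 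The naturality of each of these families of morphisms in $\sss$ under the $E$-parametrized morphisms of $\tg_\CM$ unpacks, case by case, into the conditions that each $\phi_{x,e}$ is an algebra morphism of $\sss$ (from $m$ and $u$), into Axiom~\eqref{eq-Xi-act-3} (from $\Delta$), into a vacuity (from $\varepsilon$), and into the identity of Lemma~\ref{lem-Xi-antipode} (from $S$).

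For the inverse direction, given a Hopf algebra $(\cc,F,m,u,\Delta,\varepsilon,S)$ in $\ee(\sss)$, the opening paragraph identifies $\cc=\tg_\CM$ and fixes the functor components of the structural morphisms. Writing $m=(\Delta_\CM,\gamma^m)$, $u=(\varepsilon_\CM,\gamma^u)$, $\Delta=(m_\CM,\gamma^\Delta)$, $\varepsilon=(u_\CM,\gamma^\varepsilon)$, $S=(S_\CM,\gamma^S)$ and setting $A_x=F(x)$, $\mu_x=\gamma^m_x$, $\eta_x=\gamma^u_x$, $\Delta_{x,y}=\gamma^\Delta_{(x,y)}$, $\varepsilon^A=\gamma^\varepsilon_\ast$, $S_x=\gamma^S_x$, $\phi_{x,e}=F(e)$, one obtains all the data of a Hopf $\CM$-coalgebra in $\sss$. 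Unpacking the Hopf algebra axioms of $(\cc,F)$ in $\ee(\sss)$ through the composition formula of Section~\ref{sect-ES}, and evaluating at the objects of $\tg_\CM$ and its cartesian powers, translates them one-to-one into the associativity, unitality, coassociativity, counitality, bialgebra, and antipode axioms for $A$, while the naturality of the structural transformations yields the compatibilities of $\phi$ with the rest of the structure. The two constructions are mutually inverse essentially by tautology.

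The main obstacle is the bookkeeping needed to check that each Hopf algebra axiom in $\ee(\sss)$, restricted to the natural transformation components, produces exactly one Hopf $\CM$-coalgebra axiom, with no omission or redundancy. A slightly delicate aspect is that the axioms split naturally into two kinds: the equational axioms of the Hopf algebra in $\ee(\sss)$ produce the (co)algebra, bialgebra, and antipode compatibilities between $\mu$, $\eta$, $\{\Delta_{x,y}\}$, $\varepsilon^A$, $\{S_x\}$, while the naturality conditions for the structural transformations produce all the compatibilities involving the $\CM$-action $\phi$. Both kinds arise uniformly from the definition of the category $\ee(\sss)$ and together yield precisely the axioms of Section~\ref{sect-Hopf-crossed-module-coalgebras}.
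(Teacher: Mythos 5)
Your proposal is correct and follows essentially the same route as the paper: identify the underlying category of a Hopf algebra in $\ee(\sss)$ with $\tg_\CM$ via the forgetful functor to $\Cat^\opp$ (using that Hopf algebras in $\Cat$ are crossed modules), then match the natural-transformation components of the structural morphisms with the data of a Hopf $\CM$-coalgebra, with the equational axioms giving the (co)algebra/bialgebra/antipode conditions and the naturality conditions giving the $\CM$-action compatibilities. The only nuance worth adding, which the paper makes explicit, is that the naturality of the antipode (the identity of Lemma~\ref{lem-Xi-antipode}) is a consequence of the other axioms, so it imposes no extra condition and the correspondence is indeed a bijection.
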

\begin{proof}
Let $(\cc,F)$ be a Hopf algebra in the category $\ee(\sss)$. Then $\cc=\mathcal{U}(\cc,F)$ is a Hopf algebra in $\Cat^\opp$. By Section~\ref{sect-crossed-modules-as-Hopf-algebras}  and since Hopf algebras in $\Cat^\opp$ coincide with Hopf algebras in $\Cat$ (by exchanging product and coproduct), there is a unique crossed module $\CM \co E \to H$ such that
$\cc=(\tg_\CM,\Delta_\CM,\varepsilon_\CM,m_\CM,u_\CM,S_\CM)$ as Hopf algebras in $\Cat^\opp$. The functor $F\co \tg_\CM \to \sss$ gives rise to the family $A=\{A_x=F(x)\}_{x \in H}$ of objects in $\sss$ and to the family
$$
\phi=\left\{\phi_{x,e}=F\bigl(x \xrightarrow{e} \CM(e)x\bigr) \co A_x \to A_{\CM(e)x} \right\}_{(x,e) \in H \times E}
$$
of morphisms in $\sss$. The functoriality of $F$ amounts to Axioms~\eqref{eq-Xi-act-1} and~\eqref{eq-Xi-act-2}.
The product of the Hopf algebra $(\cc,F)$ is $(\Delta_\CM,\mu)$ where $\mu=\{\mu_x \co A_x \otimes A_x \to A_x\}_{x \in H}$ is a natural transformation from $(F \otimes F)\Delta_\CM$ to $F$. The unit of $(\cc,F)$ is $(\varepsilon_\CM,\eta)$ where $\eta=\{\eta_x \co \un \to A_x\}_{x \in H}$ is a natural transformation  from $\underline{\un}\varepsilon_\CM$ to $F$.
The coproduct of $(\cc,F)$ is $(m_\CM,\Delta)$ where $\Delta=\{\Delta_{x,y} \co A_{xy} \to A_x \otimes_\kk A_y \}_{x,y \in H}$ is a natural transformation  from $Fm_\CM$ to $F \otimes F$. The counit of $(\cc,F)$ is $(u_\CM,\varepsilon)$ where~$\varepsilon$ is a natural transformation from $Fu_\CM$ to $\underline{\un}$, that is, a morphism $\varepsilon \co A_1 \to \un$ in $\sss$. The antipode of $(\cc,F)$ is $(S_\CM,S)$ where $S=\{S_x \co A_{x^{-1}} \to A_x\}_{x \in H}$ is a natural transformation from $FS_\CM$ to $F$. The associativity and unitality  of the product of $(\cc,F)$ gives that each $A_x$ is an algebra in $\sss$ with unit  $\eta_x$. The naturality  of the product and the unit of $(\cc,F)$ give that each $\phi_{x,e}$ is an algebra morphism.
The coassociativity and  counitality  of the coproduct of $(\cc,F)$ gives that $\Delta$ is coassociative and counital. 
The naturality  of the coproduct of $(\cc,F)$ amounts to Axiom~\eqref{eq-Xi-act-3}. The naturality of the counit is automatic. The fact that $(\cc,F)$ is a bialgebra in~$\ee(\sss)$ implies that each  $\Delta_{x,y}$ and $\varepsilon$ are algebra morphisms. The fact that $(S_\CM,S)$ is an invertible antipode for $(\cc,F)$ implies that the $S_x$ are invertible and satisfy the axiom of Section~\ref{sect-Hopf-H-coalgebras-antipodes}. (Note that the naturality of the antipode of $(\cc,F)$ amounts to the property of Lemma~\ref{lem-Xi-antipode} extended to Hopf $\CM$-coalgebras in $\sss$, and so can be deduced from the other axioms.) Then~$A$ is a Hopf $\CM$-coalgebra in $\sss$.

Conversely, any Hopf $\CM$-coalgebra $A=\{A_x\}_{x \in H}$ in $\sss$ with $\CM$-action $\phi$ gives rise to a Hopf algebra $(\tg_\CM,F_A)$ in $\ee(\sss)$, where the functor $F_A \co \tg_\CM \to \Mod_\kk$ is defined by
$F_A(x)=A_x$  and $F_A\bigl(x \xrightarrow{e} \CM(e)x\bigr)=\phi_{x,e}$,
with (co)product, (co)unit, and antipode derived from those of $A$ as above.
\end{proof}

\subsection{Hopf algebras in $\ff(\sss)$}\label{sect-Hopf-alg-FS}
We associate to $\sss$ another symmetric monoidal category $\ff(\sss)$ defined as follows. 

Objects of $\ff(\sss)$ are pairs $(\cc,F)$ where~$\cc$ is a small category and $F\co \cc \to \sss$ is a functor. A morphism from $(\cc,F)$ to $(\dd,G)$ is a pair~$(\Gamma,\gamma)$ where $\Gamma\co \cc \to \dd$ is a functor and $\gamma$ is a natural transformation from $F$ to $G \Gamma$. The composition, monoidal product, and symmetry of $\ff(\sss)$ are defined in a way similar to~$\ee(\sss)$. Explicitly, the composition of $(\Gamma,\gamma)\co (\cc,F) \to (\dd,G)$ with $(\Lambda,\lambda)\co (\dd,G) \to (\mathcal{K},H)$ is 
$$
(\Lambda,\lambda) \circ (\Gamma,\gamma)=( \Lambda\Gamma, \lambda_\Gamma \gamma) \quad \text{where} \quad
\lambda_\Gamma \gamma=\{\lambda_{\Gamma(X)} \circ \gamma_X\}_{X\in \mathcal{\cc}}.
$$
The identity of $(\cc,F)$ is $(1_\cc,\id_F)$.
The monoidal product of two objects $(\cc,F)$ and $(\dd,G)$ is defined by
$$
(\cc,F) \boxtimes (\dd,G)=\bigl(\cc \times \dd, F\otimes G=\otimes \circ (F \times G) \co \cc \times  \dd \to \sss\bigr).
$$
The monoidal product of a morphism $(\Gamma,\gamma)\co (\cc,F) \to (\cc',F')$ with a morphism $(\Lambda,\lambda)\co (\dd,G) \to (\dd',G')$ is defined by
$$
(\Gamma,\gamma) \boxtimes (\Lambda,\lambda)=\bigl(\Gamma \times \Lambda,\gamma \otimes \lambda=\{\gamma_X \otimes \lambda_Y\}_{(X,Y) \in \cc \times \dd} \bigr).
$$ 
The unit object of $\ff(\sss)$ is $I=(\mathbf{1},\underline{\un})$, see Section~\ref{sect-ES}. The monoidal constraints and symmetry of $\ff(\sss)$ are induced (in the obvious way) from those of $\Cat$ and $\sss$. In particular, the forgetful functor $\mathcal{U}\co\ee(\sss) \to \Cat$, defined by $\mathcal{U}(\cc,F)=\cc$ and $\mathcal{U}(\Gamma,\gamma)=\Gamma$,
is strict monoidal and symmetric.

\begin{thm}\label{thm-characterization-Hopf-Xi-algebras-in-S}
Hopf algebras in $\ff(\sss)$ are in bijective correspondence with pairs $(\CM,A)$ where $\CM$ is a crossed module and $A$ is a Hopf $\CM$-algebra in $\sss$.
\end{thm}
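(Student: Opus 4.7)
The plan is to adapt the proof of Theorem~\ref{thm-characterization-Hopf-Xi-coalgebras-in-S} in a formally dual way. The crucial structural difference is that the forgetful functor $\mathcal{U}\co \ff(\sss) \to \Cat$ is strict monoidal and symmetric and lands in $\Cat$ itself (rather than $\Cat^\opp$), and natural transformations in $\ff(\sss)$ point \emph{from} $F$ \emph{to} $G\Gamma$. Thus, applying $\mathcal{U}$ to a Hopf algebra $(\cc,F)$ in $\ff(\sss)$ yields a Hopf algebra $\cc$ in $\Cat$, and by Section~\ref{sect-crossed-modules-as-Hopf-algebras} there is a unique crossed module $\CM\co E \to H$ such that $\cc=\tg_\CM$ with Hopf algebra structure $(m_\CM,u_\CM,\Delta_\CM,\varepsilon_\CM,S_\CM)$.

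I would then unpack the remaining data. The functor $F\co \tg_\CM \to \sss$ provides the family $A=\{A_x=F(x)\}_{x \in H}$ of objects in $\sss$ together with the morphisms $\phi_{x,e}=F(x \xrightarrow{e} \CM(e)x)\co A_x \to A_{\CM(e)x}$; functoriality of $F$ is precisely the axioms $\phi_{x,1}=\id_{A_x}$ and $\phi_{\CM(e)x,f}\phi_{x,e}=\phi_{x,fe}$. Writing the (co)product, (co)unit and antipode of $(\tg_\CM,F)$ as pairs $(m_\CM,\mu)$, $(u_\CM,\eta)$, $(\Delta_\CM,\Delta)$, $(\varepsilon_\CM,\varepsilon)$, $(S_\CM,S)$, the natural-transformation components take the form $\mu_{x,y}\co A_x \otimes A_y \to A_{xy}$, $\eta\co \un \to A_1$, $\Delta_x\co A_x \to A_x \otimes A_x$, $\varepsilon_x\co A_x \to \un$, and $S_x\co A_x \to A_{x^{-1}}$, matching the data of a Hopf $\CM$-algebra in $\sss$ in the sense of Section~\ref{sect-dual-notion}.

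Next I would show that each Hopf algebra axiom for $(\tg_\CM,F)$ in $\ff(\sss)$ corresponds to the appropriate axiom for a Hopf $\CM$-algebra in $\sss$. Associativity and unitality of $(m_\CM,\mu)$ make $\{A_x\}_{x \in H}$ into an $H$-graded algebra in $\sss$, while coassociativity and counitality of $(\Delta_\CM,\Delta)$ make each $A_x$ a coalgebra in $\sss$. Naturality of $\Delta$ and of $\varepsilon$ at a morphism $e\co x \to \CM(e)x$ expresses that each $\phi_{x,e}$ is a coalgebra morphism; naturality of $\mu$ at a pair $(e,f)$ yields $\mu_{\CM(e)x,\CM(f)y}(\phi_{x,e}\otimes \phi_{y,f})=\phi_{xy,e\lact{x}{\!f}}\mu_{x,y}$; naturality of $\eta$ is automatic as $\mathbf{1}$ has only an identity morphism. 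The bialgebra axiom on $(\tg_\CM,F)$ amounts to $(m_\CM,\mu)$ being a coalgebra morphism in $\ff(\sss)$, which unpacks to each $\mu_{x,y}$ being a coalgebra morphism in $\sss$. The antipode axiom gives invertibility of $S$ and the defining identities of Section~\ref{sect-dual-notion}; the naturality of $S$ is the dual analogue of Lemma~\ref{lem-Xi-antipode} and is deducible from the other axioms.

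Conversely, given a Hopf $\CM$-algebra $A$ in $\sss$ with $\CM$-action $\phi$, define a functor $F_A\co \tg_\CM \to \sss$ by $F_A(x)=A_x$ and $F_A(x \xrightarrow{e} \CM(e)x)=\phi_{x,e}$, and equip $(\tg_\CM,F_A)$ with the Hopf algebra structure in $\ff(\sss)$ whose component data are $\mu$, $\eta$, $\Delta$, $\varepsilon$, $S$. The verification that this yields a Hopf algebra in $\ff(\sss)$ is the reverse of the previous unpacking, and the two assignments are mutually inverse by construction. The only real obstacle in the whole argument is careful bookkeeping of the direction of natural transformations and of the composition rule $(\Lambda,\lambda)\circ (\Gamma,\gamma)=(\Lambda\Gamma,\lambda_\Gamma\gamma)$ in $\ff(\sss)$; once this bookkeeping is done correctly, the argument is completely parallel to that of Theorem~\ref{thm-characterization-Hopf-Xi-coalgebras-in-S}.
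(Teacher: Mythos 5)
Your proposal is correct, but it takes a genuinely different route from the paper. You prove the theorem by direct unpacking: you push a Hopf algebra $(\cc,F)$ in $\ff(\sss)$ down to $\Cat$ via the forgetful functor, identify $\cc$ with $\tg_\CM$, and then translate each structure map and each naturality/compatibility condition into the corresponding axiom of a Hopf $\CM$-algebra in $\sss$, exactly mirroring the direct proof of Theorem~\ref{thm-characterization-Hopf-Xi-coalgebras-in-S}. Your dictionary is accurate throughout: the components of the product, unit, coproduct, counit and antipode do take the forms $\mu_{x,y}\co A_x\otimes A_y\to A_{xy}$, $\eta\co\un\to A_1$, $\Delta_x\co A_x\to A_x\otimes A_x$, $\varepsilon_x\co A_x\to\un$, $S_x\co A_x\to A_{x^{-1}}$ because the underlying functors are forced to be $m_\CM,u_\CM,\Delta_\CM,\varepsilon_\CM,S_\CM$; naturality of the product gives the third $\CM$-action axiom; naturality of the coproduct and counit gives that each $\phi_{x,e}$ is a coalgebra morphism; and the naturality of the antipode is indeed redundant, being the dual of Lemma~\ref{lem-Xi-antipode}. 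The paper instead argues formally: the assignments $(\cc,F)\mapsto(\cc^{\opp},F^{\opp})$ induce a symmetric strict monoidal isomorphism $\ff(\sss)\cong(\ee(\sss^{\opp}))^{\opp}$, so Hopf algebras in $\ff(\sss)$ correspond to Hopf algebras in $\ee(\sss^{\opp})$ (exchanging product and coproduct), hence by Theorem~\ref{thm-characterization-Hopf-Xi-coalgebras-in-S} to Hopf $\CM$-coalgebras in $\sss^{\opp}$, which are Hopf $\CM$-algebras in $\sss$. The paper's reduction is shorter and avoids re-verifying the whole dictionary; your version is self-contained and makes the explicit correspondence of structure maps visible, at the cost of duplicating the bookkeeping already done in the coalgebra case.
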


\begin{proof}
Recall that $\cc^\opp$ denotes the category opposite to a category $\cc$. The opposite of a functor $F\co \cc \to \dd$ is the functor $F^\opp\co \cc^\opp \to \dd^\opp$ acting as $F$ on objects and morphisms. The opposite of a natural transformation $\alpha$ from a functor $F \co \cc \to \dd$ to a functor $G \co \cc \to \dd$ is the natural transformation $\alpha^\opp$ from $F^\opp \co \cc^\opp \to \dd^\opp$ to  $G^\opp \co \cc^\opp \to \dd^\opp$ defined by $\alpha^\opp_X=\alpha_X$ for all $X \in \Ob(\cc^\opp)=\Ob(\cc)$.

Observe that the assignments $(\cc,F) \mapsto (\cc^{\opp},F^{\opp})$ and $(\Gamma,\gamma) \mapsto (\Gamma^{\opp},\gamma^{\opp})$ induce a symmetric strict monoidal isomorphism $\ff(\sss) \cong (\ee(\sss^\opp))^\opp$. Consequently, Hopf algebras in $\ff(\sss)$ are in bijective correspondence with Hopf algebras in $(\ee(\sss^\opp))^\opp$, and so  with Hopf algebras in $\ee(\sss^\opp)$ (by exchanging product and coproduct), and so with pairs $(\CM,A)$ where $\CM$ is a crossed module and $A$ is a Hopf $\CM$-coalgebra in $\sss^\opp$ (by Theorem~\ref{thm-characterization-Hopf-Xi-coalgebras-in-S}). We conclude using that  Hopf $\CM$-coalgebras in~$\sss^\opp$ bijectively correspond to Hopf $\CM$-algebras in the opposite category $\sss$.
\end{proof}

\subsection{The case of Hopf crossed module-(co)algebras over $\kk$}
Applying Theorem~\ref{thm-characterization-Hopf-Xi-coalgebras-in-S} to the symmetric monoidal category $\Mod_\kk$ of \kt modules and \kt linear homomorphisms, we obtain the following  characterization Hopf crossed module-coalgebras over $\kk$ as Hopf algebras:

\begin{cor}\label{cor-characterization-Hopf-Xi-coalgebras}
Hopf algebras in $\ee(\Mod_\kk)$ are in bijective correspondence with pairs $(\CM,A)$ where $\CM$ is a crossed module and $A$ is a Hopf $\CM$-coalgebra over $\kk$.
\end{cor}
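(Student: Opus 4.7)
The plan is to derive this statement as an immediate specialization of Theorem~\ref{thm-characterization-Hopf-Xi-coalgebras-in-S}. Since $\Mod_\kk$ is a symmetric monoidal category (with its standard tensor product and the usual flip as symmetry), the hypotheses of that theorem apply with $\sss=\Mod_\kk$, yielding a bijection between Hopf algebras in $\ee(\Mod_\kk)$ and pairs $(\CM,A)$ consisting of a crossed module $\CM$ and a Hopf $\CM$-coalgebra in $\Mod_\kk$.

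To conclude, I would invoke the identification already noted in Section~\ref{sect-Hopf-Xi-as-Hopf}: a Hopf $\CM$-coalgebra in the symmetric monoidal category $\Mod_\kk$ is precisely a Hopf $\CM$-coalgebra over $\kk$ in the sense of Section~\ref{sect-Hopf-crossed-module-coalgebras}. Indeed, unwinding the definition of Section~\ref{sect-Hopf-Xi-as-Hopf} with $\sss=\Mod_\kk$, the objects $A_x$ become \kt algebras, the structural maps become \kt linear, and the axioms of coassociativity, counitality, antipode, and $\CM$-action translate verbatim into those of Section~\ref{sect-Hopf-crossed-module-coalgebras}. Substituting this identification into the correspondence supplied by Theorem~\ref{thm-characterization-Hopf-Xi-coalgebras-in-S} gives exactly the bijection stated in the corollary.

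There is no genuine obstacle here: the entire content of the result lies in Theorem~\ref{thm-characterization-Hopf-Xi-coalgebras-in-S}. The only thing to verify is that the two notions of ``Hopf $\CM$-coalgebra over $\kk$''---the original one from Section~\ref{sect-Hopf-crossed-module-coalgebras} and the specialization to $\sss=\Mod_\kk$ of the one from Section~\ref{sect-Hopf-Xi-as-Hopf}---agree, and this is already explicitly recorded at the end of Section~\ref{sect-Hopf-Xi-as-Hopf}. Hence the corollary follows in one line from Theorem~\ref{thm-characterization-Hopf-Xi-coalgebras-in-S} applied to $\sss=\Mod_\kk$.
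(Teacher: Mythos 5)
Your proposal is correct and is exactly the paper's argument: the corollary is obtained by applying Theorem~\ref{thm-characterization-Hopf-Xi-coalgebras-in-S} to $\sss=\Mod_\kk$ and using the identification, already recorded in Section~\ref{sect-Hopf-Xi-as-Hopf}, of Hopf $\CM$-coalgebras in $\Mod_\kk$ with Hopf $\CM$-coalgebras over $\kk$. Nothing further is needed.
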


Similarly, applying Theorem~\ref{thm-characterization-Hopf-Xi-algebras-in-S} to $\Mod_\kk$, we obtain the following  characterization Hopf crossed module-algebras over $\kk$ as Hopf algebras:
\begin{cor}\label{cor-characterization-Hopf-Xi-algebras}
Hopf algebras in $\ff(\Mod_\kk)$ are in bijective correspondence with pairs $(\CM,A)$ where $\CM$ is a crossed module and $A$ is a Hopf $\CM$-algebra over $\kk$.
\end{cor}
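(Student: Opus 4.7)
The plan is to obtain this corollary as a direct specialization of Theorem~\ref{thm-characterization-Hopf-Xi-algebras-in-S}. The theorem has been established for an arbitrary symmetric monoidal category $\sss$, so the proof reduces to applying it to one particular category and unpacking what the output means in that setting.

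First, I would take $\sss=\Mod_\kk$, which is a symmetric monoidal category (with monoidal product~$\otimes_\kk$ and unit object~$\kk$), so the hypothesis of Theorem~\ref{thm-characterization-Hopf-Xi-algebras-in-S} is satisfied. The theorem then gives a bijection between Hopf algebras in $\ff(\Mod_\kk)$ and pairs $(\CM,A)$ where $\CM$ is a crossed module and $A$ is a Hopf $\CM$-algebra in $\Mod_\kk$.

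Second, I would invoke the identification already recorded in the first subsection of Section~\ref{sect-Hopf-Xi-as-Hopf}: a Hopf $\CM$-algebra in $\Mod_\kk$ is, by unwinding the definitions (replacing \textquotedblleft object of $\sss$\textquotedblright\ by \textquotedblleft\kt module\textquotedblright\ and \textquotedblleft morphism in $\sss$\textquotedblright\ by \textquotedblleft\kt linear homomorphism\textquotedblright), exactly a Hopf $\CM$-algebra over $\kk$ in the sense of Section~\ref{sect-dual-notion}. Substituting this identification into the bijection from Theorem~\ref{thm-characterization-Hopf-Xi-algebras-in-S} yields the claim.

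There is no real obstacle here; the work has all been done in Theorem~\ref{thm-characterization-Hopf-Xi-algebras-in-S} (whose proof passes through the symmetric monoidal isomorphism $\ff(\sss)\cong(\ee(\sss^\opp))^\opp$ and Theorem~\ref{thm-characterization-Hopf-Xi-coalgebras-in-S}), and the only point to check is the tautological identification of the notions of Hopf $\CM$-algebra in the category-theoretic and in the $\kk$-linear senses. The proof is therefore a single sentence: apply Theorem~\ref{thm-characterization-Hopf-Xi-algebras-in-S} to $\sss=\Mod_\kk$.
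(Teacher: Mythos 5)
Your proposal is correct and matches the paper exactly: the corollary is obtained by applying Theorem~\ref{thm-characterization-Hopf-Xi-algebras-in-S} to $\sss=\Mod_\kk$ and using the identification, already noted in the first subsection of Section~\ref{sect-Hopf-Xi-as-Hopf}, of Hopf $\CM$-algebras in $\Mod_\kk$ with Hopf $\CM$-algebras over $\kk$. Nothing further is needed.
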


Recall that if $H$ is a group, then the trivial map $1\to H$ is a crossed module and the notion of a Hopf $(1\to H)$-(co)algebra over $\kk$ corresponds to that of a Hopf $H$-(co)algebra over~$\kk$. 
Let $\ee_d(\Mod_\kk)$ and $\ff_d(\Mod_\kk)$  be the symmetric monoidal full subcategories of $\ee(\Mod_\kk)$ and $\ff(\Mod_\kk)$ consisting of the objects $(\cc,F)$ with~$\cc$ a discrete small category (that is, a set).
By restricting the correspondences of Corollaries~\ref{cor-characterization-Hopf-Xi-coalgebras} and~\ref{cor-characterization-Hopf-Xi-algebras}  to these subcategories, we recover the following  
characterizations given in~\cite{CD}: Hopf algebras in $\ee_d(\Mod_\kk)$ (respectively, in $\ff_d(\Mod_\kk)$) are in bijective correspondence
with pairs $(H,A)$ where $H$ is a group and $A$ is a Hopf $H$-coalgebra  over~$\kk$ (respectively, a Hopf $H$-algebra over~$\kk$).

\subsection*{Acknowledgements}
This work was supported in part by the FNS-ANR grant OCHoTop (ANR-18-CE93-0002) and the Labex CEMPI  (ANR-11-LABX-0007-01).

\end{document}